\def\normo#1{\left\|#1\right\|}
\def\normb#1{\big\|#1\big\|}
\def\aabs#1{\left|#1\right|}
\def\brk#1{\left(#1\right)}
\def\rev#1{\frac{1}{#1}}
\def\norm#1{\|#1\|}
\def\jb#1{\langle#1\rangle}
\newcommand{\R}{{\mathbb R}}
\newcommand{\C}{{\mathbb C}}
\newcommand{\Z}{{\mathbb Z}}
\newcommand{\ft}{{\mathcal{F}}}
\newcommand{\Hl}{{\mathcal{H}}}
\newcommand{\les}{{\lesssim}}
\newcommand{\ges}{{\gtrsim}}
\newcommand{\ra}{{\rightarrow}}
\newcommand{\G}{{\mathcal{G}}}
\newcommand{\Sch}{{\mathcal{S}}}
\newcommand{\sgn}{{\mbox{sgn}}}
\newcommand{\sub}{{\mbox{sub}}}
\newcommand{\thd}{{\mbox{thd}}}
\newcommand{\U}{{U_\delta}}
\newcommand{\dr}{{\omega_\delta}}
\numberwithin{equation}{section}
\theoremstyle{plain}
  \newtheorem{theorem}[subsection]{Theorem}
  \newtheorem{proposition}[subsection]{Proposition}
  \newtheorem{lemma}[subsection]{Lemma}
  \newtheorem{corollary}[subsection]{Corollary}
\theoremstyle{remark}
  \newtheorem{remark}[subsection]{Remark}
\theoremstyle{definition}
\newenvironment{proof}{\noindent {\bf Proof.} }{\endprf\par}
\def \endprf{\hfill  {\vrule height6pt width6pt depth0pt}\medskip}
\begin{document}
\title[modified finite-depth-fluid equation]{Global well-posedness and limit behavior for the modified finite-depth-fluid equation}

\author{Zihua Guo and Baoxiang Wang}
\address{LMAM, School of Mathematical Sciences, Peking University, Beijing
100871, China}

\email{zihuaguo, wbx@@math.pku.edu.cn}

\urladdr{http://guo.5188.org}

\begin{abstract} Considering the Cauchy problem for the modified
finite-depth-fluid equation
\begin{eqnarray*}
\partial_tu-\G_\delta(\partial_x^2u)\mp u^2u_x=0,\ \
u(0)=u_0,
\end{eqnarray*}
where $\G_\delta f=-i \ft ^{-1}[\coth(2\pi \delta \xi)-\frac{1}{2\pi
\delta \xi}]\ft f$, $\delta\ges 1$, and $u$ is a real-valued
function, we show that it is uniformly globally well-posed if $u_0
\in H^s\ (s\geq 1/2)$ with $\norm{u_0}_{L^2}$ sufficiently small for
all $\delta \ges 1$. Our result is sharp in the sense that the
solution map fails to be $C^3$ in $H^s (s<1/2)$. Moreover, we prove
that for any $T>0$, its solution converges in $C([0,T]; \,H^s)$ to
that of the modified Benjamin-Ono equation if $\delta$ tends to
$+\infty$.
\end{abstract}

\keywords{Global wellposedness, Modified finite-depth-fluid
equation, Limit behavior}

\subjclass[2000]{Primary: 35Q35; Secondary: 35Q53}

\maketitle


\section{Introduction}
In this paper, we study the Cauchy problem for the (defocusing)
modified finite-depth-fluid (mFDF) equation (the focusing version
with nonlinearity $u^2u_x$ can also be treated by our methods)
\begin{eqnarray}\label{eq:mFDF}
&&\partial_tu-\G_\delta(\partial_x^2u)- u^2u_x=0,\ u(x,0)=u_0(x),
\end{eqnarray}
where $u: \R^2 \ra \R$ is a real-valued function of $(x,t)\in
\R\times \R$,
\begin{equation}\label{eq:Gdelta}
\G_\delta f=-i \ft ^{-1}[\coth(2\pi \delta \xi)-\frac{1}{2\pi \delta
\xi}]\ft f,
\end{equation}
and $\delta> 0$ is a real number which characterizes the depth of
the fluid layer. The equation \eqref{eq:mFDF} is a special one of
the following so-called generalized finite-depth-fluid equations
\begin{eqnarray}\label{eq:gFDF}
\partial_tu-\G_\delta(\partial_x^2u)+u^{k}u_x=0,\ u(x,0)=u_0(x).
\end{eqnarray}

Eq. \eqref{eq:gFDF} with $k=1$ was first derived by Joseph
\cite{Jos,KSA,SAK} to describe the propagation of internal waves in
the stratified fluid of finite depth. From the physical point of
view, if the depth $\delta$ tends to infinity, then Eq.
\eqref{eq:mFDF} reduces to the modified Benjamin-Ono equation
\begin{eqnarray}\label{eq:mBO}
\partial_t u-\Hl(\partial_x^2 u)-u^2u_x=0, \ u(x,0)=u_0(x),
\end{eqnarray}
where $\Hl=-i\ft^{-1}\sgn(\xi)\ft$ denotes the Hilbert transform.
There is another form of the modified finite-depth-fluid equation
which is
\begin{eqnarray}\label{eq:mFDF2}
\partial_tu-\frac{3}{2\pi \delta}\G_\delta(\partial_x^2u)-u^2u_x=0,\ u(x,0)=u_0(x).
\end{eqnarray}
It is easy to see that under the transformation
\begin{eqnarray}\label{eq:trans}
u(t,x)\rightarrow \brk{\frac{3}{2\pi \delta}}^{1/2}u(\frac{3}{2\pi
\delta}t,x),
\end{eqnarray}
Eq. \eqref{eq:mFDF} turns into Eq. \eqref{eq:mFDF2}. If the depth
$\delta$ tends to $0$, then Eq. \eqref{eq:mFDF2} becomes the
modified Korteweg-de Vries equation
\begin{eqnarray}\label{eq:mKdV}
\partial_t u+\partial_x^3 u-u^2u_x=0.
\end{eqnarray}

There are a few literatures which are concerned with the
wellposedness for the Cauchy problem \eqref{eq:gFDF}. For the case
$k=1$, using the energy methods, Abdelouhab, Bona, Felland and Saut
\cite{ABFS} obtained global wellposedness in $H^s$ with $s>3/2$, and
the limit behavior as $\delta \rightarrow \infty$ and
$\delta\rightarrow 0$ of the solutions of Eqs. \eqref{eq:gFDF} in
$C^k([0,T];H^{s-2k})\ (s>3/2)$ and $C([0,T];H^s)\ (s\geq 2)$. For
$k\geq 4$, Han and Wang \cite{HWang} proved global wellposedness for
the equation \eqref{eq:gFDF} with small initial data in the critical
Besov spaces by using the smoothing effect estimates. To the
authors' knowledge, we are not aware of any other wellposedness
results. On the other hand, the limit equations \eqref{eq:mBO} and
\eqref{eq:mKdV} have been extensively studied during the past
decades. See \cite{Tao} for a thorough review.

In the first part of this paper, we study the wellposedness for the
Cauchy problem \eqref{eq:mFDF}. Our methods are inspired by the
important observation made by the first-named author \cite{Guo} for
the modified Benjamin-Ono equation. Precisely, for the modified
Benjamin-Ono equation, one may use a direct contraction principle to
prove wellposedness but without using a gauge transformation. We
will adopt the same ideas for the mFDF equation. From the technical
point of view, Eq. \eqref{eq:mFDF} is easier to handle than Eq.
\eqref{eq:mFDF2}. Indeed, to prove wellposedness by iteration, the
biggest enemy is the loss of derivative from the nonlinearity and
the worst case is the high-low interaction. We will see from Lemma
\ref{elemes} that if $\delta\ges 1$, the dispersion relation of Eq.
\eqref{eq:mFDF} has uniform estimates in high frequency while that
of Eq. \eqref{eq:mFDF2} doesn't. Our methods rely heavily on the
symmetries of the mFDF equation \eqref{eq:mFDF}. The first one is
the scaling invariance which enables us to assume the initial data
has small norm. It is easy to see that Eq. \eqref{eq:mFDF} is
invariant under the following transformation
\begin{equation}\label{eq:scaling}
u(x,t)\ra\
u_\lambda=\frac{1}{\lambda^{1/2}}u(\frac{x}{\lambda},\frac{t}{{\lambda^2}}),
\ u_0 \rightarrow
u_{0,\lambda}=\frac{1}{\lambda^{1/2}}u_0(\frac{x}{\lambda}), \
\delta \rightarrow \lambda\delta.
\end{equation}
We will assume $\lambda\gg 1$, thus $\lambda \delta \ges 1$ if
$\delta \ges 1$. There are at least the following three conservation
laws preserved under the flow of \eqref{eq:mBO}
\begin{eqnarray}\label{eq:conservation}
&&\frac{d}{dt}\int_\R u(x,t)dx=0, \\
&&\frac{d}{dt}\int_\R u(x,t)^2dx=0,\label{eq:L2con}\\
&&\frac{d}{dt}\int_\R \frac{1}{2}u\G_\delta u_x-\frac{1}{12}
u(x,t)^4dx=0.\label{eq:H1half}
\end{eqnarray}
These conservation laws provide a priori bounds on the solution. For
example, we can get from Lemma \ref{elemes}, \eqref{eq:L2con} and
\eqref{eq:H1half} that if $u$ is a smooth solution to
\eqref{eq:mFDF} (for the focusing case, we assume
$\norm{u_0}_{L^2}\ll 1$) and $\delta\ges 1$ then
\begin{eqnarray}\label{eq:H1halfpriori}
\norm{u}_{H^{1/2}}\les C(\norm{u_0}_{H^{1/2}}).
\end{eqnarray}

There are several methods to compensate the loss of derivative from
the nonlinearity. Energy methods exploit the "energy cancelation",
which usually requires high regularity of the initial data. Another
approach is the smoothing effect estimate for the linear solution.
On the other hand, Bourgain's space $X^{s,b}$ defined as a closure
of the following space
\[\{f\in \Sch(\R^2):\norm{f}_{X^{s,b}}=\norm{\jb{\xi}^s\jb{\tau-\dr(\xi)}^b\widehat{f}(\xi,\tau)}_{L^2}\}\]
is very useful in the study of the low regularity theory of the
nonlinear dispersive equations \cite{Bour, KPV4, IKT}. One might try
a direct perturbative approach in $X^{s,b}$ space as Kenig, Ponce
and Vega \cite{KPV4} did for the KdV and modified KdV equations.
However, one will find that the key trilinear estimate
\begin{equation}\label{eq:trilinearX}
\norm{\partial_x(u^3)}_{X^{s,b-1}}\les \norm{u}_{X^{s,b}}^3, \
\mbox{ for some } b\in [1/2,1)
\end{equation}
 fails for any $s$ due to logarithmic
divergences involving the modulation variable (see Proposition
\ref{countertrilinear}, \ref{counterXsb} below). We found that these
logarithmic divergences can be removed by us using Banach spaces
which combine $X^{s,b}$ structure with smoothing effect structure as
we found for the mBO equation \cite{Guo}. However, compared to the
mBO equation, there is a new difficulty caused by the component
$\xi/\delta$ in the dispersion relation. Fortunately, there is a
cancelation we can use. Precisely, the resonance is almost the same
as in the mBO equation. The spaces of these structures were first
found and used by Ionescu and Kenig \cite{IK} to remove some
logarithmic divergence. Now we state our main results:

\begin{theorem}\label{t11}
Fix $0<c_0<\infty$. Let $s\geq 1/2$ and $\delta\geq c_0$. Assume
$u_0\in H^s$ and $\norm{u_0}_{L^2}\ll 1$. Then

(a) Existence. There exists $T=T(\norm{u_0}_{H^{1/2}},c_0)>0$
independent of $\delta$ and a solution $u$ to the mFDF equation
\eqref{eq:mFDF} (or its focusing version) satisfying
\begin{equation}
u\in F^{s}(T)\subset C([-T,T]:H^{s}),
\end{equation}
where the function space $F^{s}(T)$ will be defined later (see
section 2).

(b) Uniqueness. The solution mapping $u_0\rightarrow u$ is the
unique extension of the mapping $H^\infty\rightarrow
C([-T,T]:H^\infty)$.

(c) Lipschitz continuity. For any $R>0$, the mapping $u_0\rightarrow
u$ is Lipschitz continuous from $\{u_0\in
H^{s}:\norm{u_0}_{H^{s}}<R, \norm{u_0}_{L^2}\ll 1 \}$ to
$C([-T,T]:H^{s})$ uniformly for all $\delta\geq c_0$.

(d) Persistence of regularities. If in addition $u_0 \in H^{s_1}$
for some $s_1>s$, then the solution $u$ belongs to $H^{s_1}$
uniformly for all $\delta\geq c_0$.
\end{theorem}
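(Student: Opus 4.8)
The plan is to reduce, via the scaling \eqref{eq:scaling}, to data that is small in $H^{1/2}$. Given $u_0\in H^{s}$ with $\norm{u_0}_{L^2}\ll1$ and $\delta\geq c_0$, pick $\lambda\gg1$ depending only on $\norm{u_0}_{H^{1/2}}$ and $c_0$; then $\norm{u_{0,\lambda}}_{L^2}=\norm{u_0}_{L^2}\ll1$, while $\norm{u_{0,\lambda}}_{\dot H^{\sigma}}=\lambda^{-\sigma}\norm{u_0}_{\dot H^{\sigma}}$ for $\sigma>0$, so $u_{0,\lambda}$ is as small as we please in $H^{1/2}$ (and in $H^{s}$), and $\lambda\delta\geq1$. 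Solving \eqref{eq:mFDF} with $\delta\gtrsim1$ and such small data on a fixed time interval, then undoing the scaling, yields the claimed $\delta$-independent existence time; the global statement will follow by iteration (Step 4).

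\textbf{Step 2: function spaces and linear estimate.} On a fixed interval $[-T,T]$ I would introduce the spaces $F^{s}(T)$ together with companion spaces $N^{s}(T)$ for the nonlinearity: frequency-localized $X^{s,1/2,1}$-type spaces (with $\ell^{1}$ summation over dyadic modulation scales), in which each dyadic block is additionally endowed with a local-smoothing ($L^{\infty}_{x}L^{2}_{t}$) norm and, on the low-modulation part, with the $X^{s,b}$-plus-smoothing structure of Ionescu--Kenig \cite{IK}, exactly as for the mBO equation in \cite{Guo}. This combined structure is what removes the logarithmic divergence that defeats the naive estimate \eqref{eq:trilinearX} (cf. Propositions \ref{countertrilinear}, \ref{counterXsb}). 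The first task is the energy-type linear bound
\begin{equation}\label{eq:plan-lin}
\norm{u}_{F^{s}(T)}\les\norm{u_0}_{H^{s}}+\norm{\partial_tu-\G_\delta(\partial_x^2u)}_{N^{s}(T)},
\end{equation}
together with the block-level Strichartz, maximal-function and local-smoothing bounds for the free evolution $e^{t\dr(\partial_x)}$; all constants here must be uniform for $\delta\geq c_0$, which uses the uniform high-frequency estimates on $\dr$ recorded in Lemma \ref{elemes} --- this is exactly where the lower bound $\delta\geq c_0>0$ (rather than letting $\delta\to0$) is essential.

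\textbf{Step 3: the trilinear estimate (main obstacle).} The analytic core, and the hardest step, is
\begin{equation}\label{eq:plan-tri}
\norm{\partial_x(u^{3})}_{N^{s}(T)}\les\norm{u}_{F^{s}(T)}\,\norm{u}_{F^{1/2}(T)}^{2},\qquad s\geq1/2,
\end{equation}
proved by dyadic decomposition and a case analysis on the four frequencies involved. The comparable-frequency and genuinely non-resonant cases reduce to the block estimates of Step 2. The dangerous configuration is the high$\times$low interaction, which carries the full derivative loss and is where the threshold $s=1/2$ turns out to be sharp; here one uses that the resonance function $\dr(\xi_1)+\dr(\xi_2)+\dr(\xi_3)-\dr(\xi_1+\xi_2+\xi_3)$ for \eqref{eq:mFDF} coincides, up to $\delta$-uniformly controlled corrections, with that of the mBO equation, because the linear piece $\xi/(2\pi\delta)$ arising from $\frac{1}{2\pi\delta\xi}$ in \eqref{eq:Gdelta} cancels identically in the resonance identity. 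One then recovers the lost derivative either from a large modulation (a factor $\jb{\tau-\dr(\xi)}$) or, for small modulation, from the $X^{s,b}$-plus-smoothing component, exactly as for mBO; Lemma \ref{elemes} is invoked throughout to show the $\xi/\delta$ term generates only $\delta$-uniform errors. The two factors $\norm{u}_{F^{1/2}(T)}\ll1$ supply the smallness that closes the iteration. Defeating the logarithmic divergence and simultaneously controlling the new dispersive term uniformly in $\delta\geq c_0$ is the heart of the matter.

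\textbf{Step 4: conclusion.} With \eqref{eq:plan-lin}--\eqref{eq:plan-tri} and their difference versions, for $\norm{u_0}_{H^{1/2}}\ll1$ the Duhamel map is a contraction on a small ball of $F^{1/2}(T)$; its fixed point $u$ also lies in $F^{s}(T)$ (and in $F^{s_1}(T)$ when $u_0\in H^{s_1}$) by the same estimates, and the embedding $F^{s}(T)\hookrightarrow C([-T,T]:H^{s})$ gives (a) and (d). Uniqueness (b) and Lipschitz dependence (c) follow by applying the multilinear estimates to differences of solutions, uniformly in $\delta\geq c_0$ since every constant is. Finally, the $L^{2}$-conservation \eqref{eq:L2con} keeps $\norm{u(t)}_{L^2}\ll1$ for all $t$, so the local existence time never shrinks; combined with the a priori bound \eqref{eq:H1halfpriori} coming from \eqref{eq:H1half}, iterating the local theory produces the global solution, with at most exponential-in-$t$ growth of the $H^{s}$ and $H^{s_1}$ norms. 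Undoing the scaling of Step 1 finishes the proof.
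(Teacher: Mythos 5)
Your proposal follows essentially the same route as the paper: scaling to make the data small in $H^{1/2}$, the Ionescu--Kenig-type spaces $F^{s}$, $N^{s}$ combining $\ell^1$-summed dyadic $X^{s,b}$ structure with smoothing effect, the linear and retarded-linear estimates plus the trilinear estimate built on the mBO-like resonance cancelation of the $\xi/\delta$ term (uniform in $\delta\geq c_0$ via Lemma \ref{elemes}), a contraction in a small ball, and globalization through \eqref{eq:L2con}, \eqref{eq:H1half}. The only notable divergence is part (b): the paper identifies the constructed flow with the smooth ($H^\infty$) flow through an auxiliary energy-method argument in the spirit of Kenig--Koenig, verifying $\partial_x u\in L^4_{t}L^\infty_x$, rather than relying solely on difference estimates in the resolution space as your plan implicitly does.
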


From the a-priori bound \eqref{eq:H1halfpriori} and iterating
Theorem \ref{t11}, we obtain the following corollary.

\begin{corollary}
Fix $0<c_0<\infty$. The Cauchy problem for Eq. \eqref{eq:mFDF} (or
its focusing version) is uniformly globally wellposed if $\phi$
belongs to $H^s$ for $s\geq 1/2$ with $\norm{\phi}_{L^2}$
sufficiently small for all $\delta\geq c_0$.
\end{corollary}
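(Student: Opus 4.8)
The plan is to promote the local theory of Theorem~\ref{t11} to a global one by the standard continuation argument. The two inputs that make this work are already available: the local existence time in Theorem~\ref{t11} depends only on $\norm{\phi}_{H^{1/2}}$ and $c_0$ (hence neither on $\delta$ nor on any higher Sobolev norm), and the $H^{1/2}$ norm of the solution stays bounded for all time, uniformly in $\delta\ges c_0$. The second input is exactly the a priori bound \eqref{eq:H1halfpriori}: combining Lemma~\ref{elemes} (which controls, uniformly in $\delta\ges c_0$, the quadratic part $\tfrac12\int_\R u\,\G_\delta u_x\,dx$ of the energy \eqref{eq:H1half} from below by $\norm{u}_{\dot{H}^{1/2}}^2$ modulo a multiple of $\norm{u}_{L^2}^2$) with the one-dimensional Gagliardo--Nirenberg inequality $\norm{u}_{L^4}^4\les\norm{u}_{L^2}^2\,\norm{u}_{\dot{H}^{1/2}}^2$, so that the quartic term in \eqref{eq:H1half} is absorbed once $\norm{\phi}_{L^2}\ll 1$ (this smallness is what makes the focusing sign admissible), together with the conservation laws \eqref{eq:L2con} and \eqref{eq:H1half}, one obtains $\norm{u(t)}_{H^{1/2}}\les C(\norm{\phi}_{H^{1/2}})$ throughout the lifespan. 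For $H^s$ data with $s\geq 1/2$ this inequality is first proved for smooth solutions and then transferred to the solution of Theorem~\ref{t11} by part (d).

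Granting the a priori bound, I would iterate as follows. Set $M:=C(\norm{\phi}_{H^{1/2}})$ and let $T_\ast:=T(M,c_0)>0$ be the existence time from Theorem~\ref{t11} attached to initial $H^{1/2}$-size $M$. Theorem~\ref{t11} yields a solution $u$ on $[-T_\ast,T_\ast]$, and at the endpoints $\norm{u(\pm T_\ast)}_{H^{1/2}}\leq M$ by the a priori bound while $\norm{u(\pm T_\ast)}_{L^2}=\norm{\phi}_{L^2}\ll 1$ by \eqref{eq:L2con}; thus Theorem~\ref{t11} applies again, from the data $u(\pm T_\ast)$, with the \emph{same} time step $T_\ast$. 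Part (d) keeps the iterated solution in $H^s$ with finite norm on each block. After $n$ steps one has a solution on $[-nT_\ast,nT_\ast]$, and letting $n\to\infty$ produces a solution on all of $\R$ lying in $C(\R;H^s)$ and, on every bounded time interval, in the space $F^s(T)$. Since $M$ and $T_\ast$ are independent of $\delta$, the whole construction is uniform over $\delta\ges c_0$. Uniqueness in the stated class on each subinterval is inherited from part (b) and patches to uniqueness on $\R$; Lipschitz dependence on $\phi$ over any fixed finite time interval follows by composing the local estimates of part (c) over the finitely many subintervals; persistence of $H^{s_1}$ regularity follows likewise from part (d).

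I do not expect an essential difficulty here: the only point that genuinely needs care, and the reason the hypothesis reads $\delta\ges c_0$ (rather than $\delta>0$) with $\norm{\phi}_{L^2}\ll 1$, is the \emph{uniformity in $\delta$} of the a priori $H^{1/2}$ bound. This rests on the lower bound for the symbol $\coth(2\pi\delta\xi)-\rev{2\pi\delta\xi}$ supplied by Lemma~\ref{elemes} in the relevant frequency ranges (the low-frequency part $|\xi|\les\rev{c_0}$, where the symbol is not coercive, is harmless because it is absorbed into $\norm{u}_{L^2}^2$), and on the smallness of $\norm{\phi}_{L^2}$ to keep the quartic term from destroying the coercivity of the energy \eqref{eq:H1half}. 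Everything else is the bookkeeping of a standard iteration; the only quantity one must track is the growth of the continuous-dependence constants with the length of the time interval, which deteriorate but remain finite because the lifespan increment $T_\ast$ never shrinks.
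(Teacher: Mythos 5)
Your proposal is correct and follows essentially the same route as the paper: the paper obtains the corollary precisely by combining the uniform-in-$\delta$ a priori bound \eqref{eq:H1halfpriori} (which comes from the conservation laws \eqref{eq:L2con}, \eqref{eq:H1half}, Lemma \ref{elemes}, and the smallness of $\norm{\phi}_{L^2}$) with iteration of the local result of Theorem \ref{t11}, whose existence time depends only on $\norm{u_0}_{H^{1/2}}$ and $c_0$. Your write-up simply makes explicit the bookkeeping (fixed time step, conserved $L^2$ smallness, parts (b)--(d) for uniqueness, Lipschitz dependence, and persistence) that the paper leaves implicit.
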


\begin{remark}
Our methods also work for the complex-valued mFDF equation
\eqref{eq:mFDF}. We can obtain local wellposedness but with some
weaker uniqueness. See \cite{Guo}.

For the other mFDF equation \eqref{eq:mFDF2}, it is easy to see from
\eqref{eq:trans} that local wellposedness also holds. However, we
can not obtain uniform local (global) wellposedness for $0<\delta
\les 1$. This is the reason why we can not prove the limit behavior
in $C([0,T]:H^{1/2})$ as $\delta\rightarrow 0$ for Eq.
\eqref{eq:mFDF2} which we conjecture holds. Our results are sharp in
the following sense.
\end{remark}

\begin{theorem}\label{illposed}
Assume $\delta\ges 1$. If $s<1/2$, then the solution map of Eq.
\eqref{eq:mFDF} is not $C^3$ in $H^s$.
\end{theorem}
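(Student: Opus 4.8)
The plan is to show that if the solution map were $C^3$ from $H^s$ to $C([-T,T];H^s)$, then its third derivative at the origin would have to be a bounded trilinear operator, and then exhibit a sequence of initial data for which this operator blows up when $s<1/2$. Concretely, for the equation $\partial_t u - \G_\delta(\partial_x^2 u) - u^2 u_x = 0$ with data $u_0 = \epsilon\phi$, write the Duhamel formulation and expand the solution as a power series $u = \epsilon u_1 + \epsilon^2 u_2 + \epsilon^3 u_3 + \cdots$ in $\epsilon$. The linear term is $u_1(t) = W_\delta(t)\phi$, where $W_\delta(t) = \ft^{-1} e^{it\dr(\xi)}\ft$ is the linear propagator with phase $\dr(\xi)$ coming from the symbol of $\G_\delta(\partial_x^2\cdot)$. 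Since the nonlinearity is cubic, $u_2 \equiv 0$, and the first nontrivial correction is
\begin{equation*}
u_3(t) = \int_0^t W_\delta(t-t') \,\partial_x\big( (W_\delta(t')\phi)^3 \big)\, dt'.
\end{equation*}
If the solution map is $C^3$, then $u_3$ depends boundedly on $\phi$, i.e.\ $\norm{u_3}_{C([-T,T];H^s)} \les \norm{\phi}_{H^s}^3$; negating this inequality is the goal.

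Next I would compute $\widehat{u_3}(\xi,t)$ explicitly on the Fourier side. Writing the convolution over $\xi = \xi_1+\xi_2+\xi_3$, the time integral produces the standard factor $\frac{e^{it\dr(\xi)} - e^{it(\dr(\xi_1)+\dr(\xi_2)+\dr(\xi_3))}}{i(\dr(\xi) - \dr(\xi_1)-\dr(\xi_2)-\dr(\xi_3))}$ times $i\xi \prod \widehat{\phi}(\xi_j)$. The key is to choose $\widehat\phi$ concentrated so that one exploits a near-resonance making the denominator $\Phi := \dr(\xi) - \dr(\xi_1) - \dr(\xi_2) - \dr(\xi_3)$ small while $\xi$ stays large. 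Following the standard construction for modified dispersive equations (as in Kenig--Ponce--Vega for mKdV and the reference \cite{Guo} for mBO), I would take $\widehat\phi$ supported on two bumps of width $\sim 1$: one near a large frequency $N$ and one near a frequency $\sim 1$ (or $-N$ and $N$ in a high-high-to-high or high-high-to-low configuration). From Lemma~\ref{elemes} one knows that $\dr(\xi)$ is close to the mBO dispersion $\xi\abs{\xi}$ (up to the lower-order $\xi/\delta$ and tail corrections) uniformly for $\delta\ges 1$, so the resonance function $\Phi$ behaves essentially as in the mBO case; in particular there is a configuration where $\abs{\Phi}\les 1$ on a set of $(\xi_1,\xi_2,\xi_3)$ of measure $\sim 1$ while $\abs\xi\sim N$.

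With such $\phi$, $\norm\phi_{H^s}^3 \sim N^{3s}$ (the large bump dominates). On the other hand, restricting to the near-resonant region where $\abs{\Phi}\les 1$ and $\abs{t}$ is a fixed constant, the oscillatory factor is bounded below by a constant, so $\abs{\widehat{u_3}(\xi,t)} \ges N$ on a $\xi$-set of measure $\sim 1$ near $\xi\sim N$; hence $\norm{u_3}_{H^s}\ges N \cdot N^s = N^{1+s}$. Comparing, boundedness of the trilinear map would force $N^{1+s}\les N^{3s}$, i.e.\ $1+s\le 3s$, i.e.\ $s\ge 1/2$. Thus for $s<1/2$, letting $N\to\infty$ contradicts the $C^3$ bound. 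The main obstacle — and the only place where the finite-depth structure really enters — is verifying that the $\xi/\delta$ term and the difference between $\coth(2\pi\delta\xi) - \frac{1}{2\pi\delta\xi}$ and $\sgn(\xi)$ do not destroy the near-resonance uniformly in $\delta\ges 1$; this is exactly where I would invoke the uniform high-frequency estimates on $\dr$ from Lemma~\ref{elemes}, choosing the two bumps (say both at frequency $\sim N$ with opposite signs producing an output at frequency $\sim 1$, times a derivative $\xi\sim 1$ — in which case one instead tracks the worsening through a slightly different exponent) so that the correction terms are genuinely lower order. Once the resonance is pinned down, the rest is the routine size count sketched above, and one also needs the trivial remark that $C^3$ regularity of the flow propagates to a bound on the symmetric trilinear term $D^3$ evaluated at $(\phi,\phi,\phi)$, which is standard.
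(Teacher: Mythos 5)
Your proposal follows essentially the same route as the paper: reduce $C^3$ regularity of the flow map to a bound on the third Picard iterate $\int_0^t \U(t-\tau)\partial_x[(\U(\tau)\phi)^3]d\tau$, take data whose Fourier transform consists of two bumps at $\pm N$, exploit the near-resonance in the high$\times$high$\times$high$\to$high configuration (the linear $\xi/(2\pi\delta)$ part of $\dr$ cancels in the resonance function, and $|\dr''|\sim 1$ uniformly for $\delta\ges 1$ at high frequency, so the phase $P$ stays $O(1)$ there), and compare $N^{1+s}$ against $N^{3s}$ to force $s\geq 1/2$ — which is exactly the paper's argument, with the only cosmetic difference that the paper uses bumps of width $\gamma=o(t^{-1})$ and amplitude $N^{-s}\gamma^{-1/2}$ normalized so $\norm{\phi_N}_{H^s}\sim 1$. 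Note that only this $\pm N$ configuration yields your size count; the alternatives you mention in passing (a bump at frequency $\sim 1$, or an output at frequency $\sim 1$) would give a different, weaker count.
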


In the second part we study the limit behavior as $\delta\rightarrow
\infty$ for Eq. \eqref{eq:mFDF}. It is natural to conjecture that
the solution of Eq. \eqref{eq:mFDF} converges to that of
\eqref{eq:mBO} as $\delta\rightarrow \infty$. Indeed, denote by
$S_T^\delta$, $S_T$ the solution map of Eq. \eqref{eq:mFDF}, Eq.
\eqref{eq:mBO} in \cite{KenigT, Guo} and we proved the following
\begin{theorem}\label{limit}
Let $s\geq 1/2$. Assume $u_0 \in H^s(\R)$ with $\norm{u_0}_{L^2}\ll
1$. For any $T>0$, then
\begin{equation}\label{eq:limitthm}
\lim_{\delta\rightarrow
\infty}\norm{S_T^\delta(u_0)-S_T(u_0)}_{C([0,T],H^s)}=0.
\end{equation}
\end{theorem}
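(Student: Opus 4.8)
The plan is to estimate the difference $w := S_T^\delta(u_0)-S_T(u_0)$ by writing down the equation it satisfies and closing a contraction-type estimate in the function space $F^s(T)$ on a short time interval, then iterating to cover $[0,T]$. Set $u = S_T^\delta(u_0)$ and $v = S_T(u_0)$; then $w$ solves, schematically,
\begin{equation}\label{eq:wequation}
\partial_t w - \G_\delta(\partial_x^2 w) = (\G_\delta - \Hl)(\partial_x^2 v) + \partial_x\!\big(u^3 - v^3\big)/3,
\end{equation}
with $w(0)=0$. The term $\partial_x(u^3-v^3) = \partial_x\big((u^2+uv+v^2)w\big)$ is handled exactly by the trilinear estimates proved earlier for Theorem \ref{t11} (it is genuinely trilinear in $w,u,v$ and contributes a factor that can be absorbed by taking the time interval short, using the uniform-in-$\delta$ a priori bound \eqref{eq:H1halfpriori} on $\|u\|_{H^{1/2}}$, $\|v\|_{H^{1/2}}$). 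Thus the whole difficulty is concentrated in the inhomogeneous "symbol-difference" forcing term $(\G_\delta-\Hl)(\partial_x^2 v)$, which I would show tends to $0$ in the relevant dual space $N^s(T)$ (or $F^{s}(T)$ after the linear energy estimate) as $\delta\to\infty$.

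To control $(\G_\delta-\Hl)(\partial_x^2 v)$ I would exploit the explicit form of the multiplier: $\coth(2\pi\delta\xi) - \frac{1}{2\pi\delta\xi} - \sgn(\xi)$. On the region $|\xi|\gtrsim \delta^{-1/2}$ (say), this multiplier is $O(\delta^{-\infty})$ — it decays exponentially in $\delta|\xi|$ — so after pairing with the two derivatives it is still dominated by $C(\delta|\xi|)^{-N}|\xi|^2 \lesssim C_N \delta^{-N+2}$ times something controlled by $\|v\|_{H^s}$ on that frequency range, which is harmless. On the complementary low-frequency region $|\xi|\lesssim \delta^{-1/2}$ the multiplier is $O(\delta|\xi|)$ (Taylor expansion of $\coth(z)-1/z$ near $0$ is $z/3 + O(z^3)$), so $|\text{symbol}\cdot\xi^2| \lesssim \delta|\xi|^3 \lesssim \delta\cdot\delta^{-3/2} = \delta^{-1/2}$ after using $|\xi|\lesssim\delta^{-1/2}$; combined with the a priori $H^{1/2}$ bound on $v$ this gives a bound that vanishes as $\delta\to\infty$. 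One must also verify that on this low-frequency piece the dispersion relations $\dr(\xi)$ for the two equations are close, so that the $X^{s,b}$-type norms comparing the two flows behave well; again the Taylor expansion makes $\dr^\delta(\xi) - \dr^\infty(\xi) = \xi|\xi|\big(\coth(2\pi\delta\xi)-\frac{1}{2\pi\delta\xi}-\sgn\xi\big)$ small in the relevant regime. Putting these together, $\|(\G_\delta-\Hl)\partial_x^2 v\|_{N^s(T)} \to 0$ as $\delta\to\infty$, uniformly for $v$ ranging over a bounded set of solutions.

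The iteration step is then standard: on a first interval $[0,T_0]$ with $T_0 = T_0(\|u_0\|_{H^{1/2}},c_0)$ the length given by Theorem \ref{t11} (independent of $\delta$), the linear energy estimate plus the trilinear estimate plus the forcing bound give
\begin{equation}\label{eq:wbound}
\|w\|_{F^s(T_0)} \lesssim \|(\G_\delta-\Hl)\partial_x^2 v\|_{N^s(T_0)} + C(\|u_0\|_{H^{1/2}})\,\|w\|_{F^s(T_0)}\,,
\end{equation}
so after absorbing (shrinking $T_0$ if necessary, uniformly in $\delta$), $\|w\|_{F^s(T_0)} \to 0$ as $\delta\to\infty$, hence $\|w(T_0)\|_{H^s}\to 0$. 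Since by the corollary both flows exist globally with $H^{1/2}$ norm controlled by $C(\|u_0\|_{H^{1/2}})$ for all time, the time-step $T_0$ can be taken uniform, and one repeats the argument $\lceil T/T_0\rceil$ times; at each stage the new "initial difference" $\|w(kT_0)\|_{H^s}$ is small (it tends to $0$ as $\delta\to\infty$ by the previous stage) and contributes an additional vanishing term, so after finitely many steps $\|S_T^\delta(u_0)-S_T(u_0)\|_{C([0,T],H^s)}\to 0$. The main obstacle is the first technical point above — making the forcing estimate $\|(\G_\delta-\Hl)\partial_x^2 v\|_{N^s(T)}\to 0$ rigorous in the precise norm $N^s$ used for the contraction, since $N^s$ mixes $X^{s,b-1}$ and smoothing-type pieces and one must check the symbol-difference bound survives in each component and interacts correctly with the (slightly different) dispersion relation; once the frequency split at $|\xi|\sim\delta^{-1/2}$ is in place, however, each piece is elementary.
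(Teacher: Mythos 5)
Your overall strategy (difference equation, treating $(\G_\delta-\Hl)\partial_x^2 v$ as a forcing term, the trilinear estimate for $\partial_x(u^3-v^3)$, and iteration using the uniform global bounds) is the same as the paper's, but the step on which you put all the weight contains a genuine error. You claim that on $|\xi|\ges \delta^{-1/2}$ the multiplier $\coth(2\pi\delta\xi)-\frac{1}{2\pi\delta\xi}-\sgn(\xi)$ is $O\big((\delta|\xi|)^{-N}\big)$ for every $N$. This is false: while $\coth(2\pi\delta\xi)-\sgn(\xi)$ is indeed exponentially small for $\delta|\xi|$ large, the symbol of $\G_\delta$ contains the correction $-\frac{1}{2\pi\delta\xi}$, which decays only like $(\delta|\xi|)^{-1}$. (Your low-frequency claim is also off --- near $\xi=0$ the difference is $O(1)$, not $O(\delta|\xi|)$, although there the factor $\xi^2$ saves the estimate.) Consequently the best one can say is $|\hbox{symbol of }(\G_\delta-\Hl)\partial_x^2|\les \min\big(|\xi|^2,\ |\xi|/\delta\big)$, i.e.\ the forcing term loses one full derivative with gain $1/\delta$: $\norm{(\G_\delta-\Hl)\partial_x^2 v}_{H^s}\les \delta^{-1}\norm{v}_{H^{s+1}}$. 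Your key claim that $\norm{(\G_\delta-\Hl)\partial_x^2 v}_{N^s(T)}\to 0$ \emph{uniformly for $v$ in a bounded set of $H^s$ solutions} therefore fails: frequencies of $v$ at $|\xi|\gtrsim\delta$ contribute terms of size $O(1)$ no matter how large $\delta$ is, and with data merely in $H^s$ there is nothing to make them small. This is precisely why the paper's Remark 1.5 states that no convergence rate is available at the $H^s$ level.

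The paper closes this gap with a two-step argument that your proposal is missing. First, it proves the quantitative estimate only under one extra derivative of regularity: using Lemma 8.1 ($\norm{\cdot}_{N^s}\les\norm{\cdot}_{L_t^2H_x^s}$), persistence of regularity (Theorem 1.1(d)) and the a priori bound, one gets
\begin{equation*}
\norm{u_\delta-v}_{C([0,T],H^{1/2})}\les \norm{\phi_1-\phi_2}_{H^{1/2}}+\frac{1}{\delta}\,C\big(T,\norm{\phi_1}_{H^{3/2}},\norm{\phi_2}_{H^{1/2}}\big).
\end{equation*}
Second, for general $u_0\in H^s$ it sets $\varphi_K=P_{\leq K}u_0$ and splits
$S_T^\delta(u_0)-S_T(u_0)$ into $\big(S_T^\delta(u_0)-S_T^\delta(\varphi_K)\big)+\big(S_T^\delta(\varphi_K)-S_T(\varphi_K)\big)+\big(S_T(\varphi_K)-S_T(u_0)\big)$; the outer terms are controlled by the Lipschitz continuity of the two solution maps, uniform in $\delta$ (Theorem 1.1(c) and the known mBO result), and the middle term by the $H^{3/2}$ estimate with constant depending on $K$. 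One then sends $K\to\infty$ first and $\delta\to\infty$ second. Without either the extra-regularity estimate or this frequency-truncation/uniform-Lipschitz argument, the limit \eqref{eq:limitthm} does not follow from your symbol analysis, so the proposal as written has a genuine gap at its central step.
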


\begin{remark}
We are only concerned with the limit in the same regularity space.
There seems no convergence rate. This can be seen from the linear
solution,
\begin{eqnarray*}
\norm{\ft^{-1}e^{it[\coth(2\pi \delta \xi)-\frac{1}{2\pi \delta
\xi}]\xi^2}\ft
u_0-e^{t\Hl\partial_{x}^2}u_0}_{C([0,T],H^s)}\rightarrow 0, \quad
\mbox{as }\delta\rightarrow \infty,
\end{eqnarray*}
but without any convergence rate. If the initial data has higher
regularity, then there is a convergence rate. For example, we prove
that
\begin{eqnarray*}
\norm{u_\delta-v}_{C([0,T], H^{1/2})}\les
\norm{\phi_1-\phi_2}_{H^{1/2}}+\frac{1}{\delta}C(T,\norm{\phi_1}_{H^{3/2}},\norm{\phi_2}_{H^{1/2}}).
\end{eqnarray*}
For the limit behavior for the other form Eq. \eqref{eq:mFDF2} as
$\delta\rightarrow 0$, we can't prove the same results. One can
obtain the similar results as in \cite{ABFS} using the energy
methods.
\end{remark}

In proving Theorem \ref{limit} we will adopt the same ideas as we
did for the KdV-Burger equations \cite{GW}. Considering the
difference equation, we first treat the difference term
$(\G_\delta-\Hl)\partial_x^2 u$ as nonlinear term, then use the
uniform global well-posedness.

The rest of the paper is organized as following. In Section 2 we
present some notations and Banach function spaces. Some properties
of the space are given in Section 3. In Section 4 we prove symmetric
estimates that will be used to prove trilinear estimates in Section
5. Theorem \ref{t11}, \ref{illposed}, \ref{limit} are proved in
Section 6, 7, 8, respectively.

\section{Notation and Definitions}

Throughout this paper, we fix $0<c_0<\infty$. For $x, y>0$, $x\les
y$ means that there exists $C> 0$ that may depend on $c_0$ such that
$x\leq C y$. By $x\sim y$ we mean $x\les y$ and $y\les x$.
Similarly, we use $x\ges y$, $x\ll y$ and $x\gg y$. For $f\in \Sch'$
we denote by $\widehat{f}$ or $\ft (f)$ the Fourier transform of $f$
for both spatial and time variables,
\begin{eqnarray*}
\widehat{f}(\xi, \tau)=\int_{\R^2}e^{-ix \xi}e^{-it \tau}f(x,t)dxdt.
\end{eqnarray*}
We denote  by $\ft_x \ (\ft_t)$ the Fourier transform on spatial
variable (time variable). If there is no confusion, we still write
$\ft=\ft_x$. Let $\mathbb{Z}$ and $\mathbb{N}$ be the sets of
integers and natural numbers, respectively. Let $\Z_+=\Z\cap[0,
\infty)$. For $k\in \Z_+$ let ${I}_k=\{\xi: |\xi|\in [2^{k-1},
2^{k+1}]\}$ if $k\geq 1$ and $I_0=[-2,2]$.

 Let $\eta_0: \R\rightarrow [0, 1]$ denote an even
smooth function supported in $[-8/5, 8/5]$ and equal to $1$ in
$[-5/4, 5/4]$. For $k\in \Z$ let
$\chi_k(\xi)=\eta_0(\xi/2^k)-\eta_0(\xi/2^{k-1})$, $\chi_k$
supported in $\{\xi: |\xi|\in[(5/8)\cdot 2^k, (8/5)\cdot 2^k]\}$,
and
\[\chi_{[k_1,k_2]}=\sum_{k=k_1}^{k_2}\chi_k \mbox{ for any } k_1\leq k_2\in \Z.\]
For simplicity of notation, let $\eta_k=\chi_k$ if $k\geq 1$ and
$\eta_k\equiv 0$ if $k\leq -1$. Also, for $k_1\leq k_2\in \Z$ let
\[\eta_{[k_1,k_2]}=\sum_{k=k_1}^{k_2}\eta_k \mbox{ and }\eta_{\leq k_2}=\sum_{k=-\infty}^{k_2}\eta_{k}.\]
Roughly speaking, $\{\chi_k\}_{k\in \mathbb{Z}}$ is the homogeneous
decomposition function sequence and $\{\eta_k\}_{k\in \mathbb{Z}_+}$
is the non-homogeneous decomposition function sequence to the
frequency space. For $k\in \Z_+$ let $P_k$ denote the operator on
$L^2(\R)$ defined by
\[
\widehat{P_ku}(\xi)=\eta_k(\xi)\widehat{u}(\xi).
\]
By a slight abuse of notation we also define the operator $P_k$ on
$L^2(\R\times \R)$ by formula $\ft(P_ku)(\xi,
\tau)=\eta_k(\xi)\ft(u)(\xi, \tau)$. For $l\in \Z$ let
\[
P_{\leq l}=\sum_{k\leq l}P_k, \quad P_{\geq l}=\sum_{k\geq l}P_k.
\]

Let $a_1, a_2, a_3, a_4\in \R$. It will be convenient to define the
quantities $a_{max}\geq a_{sub}\geq a_{thd}\geq a_{min}$ to be the
maximum, sub-maximum, third-maximum, and minimum of $a_1,a_2,a_3,
a_4$ respectively. We also denote $\sub(a_1,a_2,a_3,a_4)=a_{sub}$
and $\thd(a_1,a_2,a_3,a_4)=a_{thd}$. Usually we use
$k_1,k_2,k_3,k_4$ and $j_1,j_2,j_3,j_4$ to denote integers,
$N_i=2^{k_i}$ and $L_i=2^{j_i}$ for $i=1,2,3,4$ to denote dyadic
numbers.

For $\xi\in \R$ let
\begin{equation}\label{eq:dr}
\dr(\xi)=[\coth(2\pi \delta \xi)-\frac{1}{2\pi \delta \xi}]\xi^2
\end{equation}
be the dispersion relation associated to Eq. \eqref{eq:mFDF}. The
elementary properties of the function $\dr(\xi)$ are given in Lemma
\ref{elemes}. For $\phi \in L^2(\R)$ let $\U(t)\phi\in C(\R:L^2)$
denote the solution of the free finite-depth-fluid evolution given
by
\begin{equation}
\ft_x[\U(t)\phi](\xi,t)=e^{it\dr(\xi)}\widehat{\phi}(\xi),
\end{equation}
where $\dr(\xi)$ is defined in \eqref{eq:dr}. For $k,j \in \Z_+$ let
$D_{k,j}=\{(\xi, \tau)\in \R \times \R: \xi \in I_k,
\tau-\dr(\xi)\in {I}_j\}$. We define first the Banach spaces
$X_k=X_k(\R^2)$. For $k\in \Z_+$ we define
\begin{eqnarray}\label{eq:Xk}
X_k&=&\{f\in L^2(\R^2): f(\xi,\tau) \mbox{ is supported in }
I_k\times\R
\mbox{ and }\nonumber\\
&& \norm{f}_{X_k}:=\sum_{j=0}^\infty
2^{j/2}\beta_{k,j}\norm{\eta_j(\tau-\dr(\xi))\cdot
f(\xi,\tau)}_{L^2_{\xi,\tau}}<\infty\},
\end{eqnarray}
where
\begin{equation}\label{eq:betakj}
\beta_{k,j}=1+2^{2(j-2k)/5}.
\end{equation}
The precise choice of the coefficients $\beta_{k,j}$ is important in
order for all the trilinear estimates to hold. This factor is
particularly important in controlling the high-low interaction.

The spaces $X_k$ are not sufficient for our purpose, due to various
logarithmic divergences involving the modulation variable. Fix $M>1$
to be a large integer which is dependent on $c_0$.  For $k\geq M$ we
also define the Banach spaces $Y_k=Y_k(\R^2)$. For $k\geq M$ we
define
\begin{eqnarray}\label{eq:Yk}
Y_k&=&\{f\in L^2(\R^2): f(\xi,\tau) \mbox{ is supported in }
\bigcup_{j=0}^{k-1}D_{k,j} \mbox{ and } \nonumber\\&&
\norm{f}_{Y_k}:=2^{-k/2}\norm{\ft^{-1}[(\tau-\dr(\xi)+i)f(\xi,\tau)]}_{L_x^1L_t^2}<\infty\}.
\end{eqnarray}
Then for $k\in \Z_+$ we define
\begin{equation}\label{eq:Zk}
Z_k:=X_k \mbox{ if } k\leq M-1 \mbox{ and } Z_k:=X_k+Y_k \mbox{ if }
k\geq M.
\end{equation}
The spaces $Z_k$ are our basic Banach spaces. For $s\geq 0$ we
define the Banach spaces $F^{s}=F^{s}(\R\times\R)$:
\begin{eqnarray}\label{eq:Fs}
F^{s}=\{u\in \Sch'(\R\times \R):
\norm{u}_{F^{s}}^2=\sum_{k=0}^{\infty}2^{2sk}\norm{\eta_k(\xi)\ft
(u)}_{Z_k}^2<\infty\},
\end{eqnarray}
and $N^{s}=N^{s}(\R\times\R)$ which is used to measure the nonlinear
term and can be viewed as an analogue of $X^{s,b-1}$
\begin{eqnarray}\label{eq:Ns}
N^{s}&=&\{u\in \Sch'(\R\times \R):\nonumber\\
&&\norm{u}_{N^{s}}^2=\sum_{k=0}^{\infty}2^{2sk}\norm{\eta_k(\xi)(\tau-\dr(\xi)+i)^{-1}\ft
(u)}_{Z_k}^2<\infty\}.
\end{eqnarray}
We also define $F^s(T)$ and $N^s(T)$ to be the spaces that $F^s$ and
$N^s$ restricted to the time interval $[-T,T]$, respectively.

These $l^1$-type $X^{s,b}$ structures $X_k$ were first introduced
and used in \cite{Tataru, IK, IKT}. It is also useful in the study
of uniform global wellposedness and inviscid limit for the nonlinear
dispersive equation with dissipative term \cite{GW}. The combination
of $X^{s,b}$ structure and smoothing effect $Z_k$ were first used by
Ionescu and Kenig \cite{IK}.

\section{Properties of the spaces $Z_k$}

In this section we devote to study the properties of the spaces
$Z_k$. We start with some elementary estimates on the dispersion
relation $\dr(\xi)$ some of which were also proved in \cite{HWang}.
\begin{lemma}\label{elemes}
If $\delta>0$, then
\begin{eqnarray}
\left \{
\begin{array}{l}
|\dr(\xi)|\sim |\xi|^2,\ |\dr'(\xi)|\sim |\xi|,\ |\dr''(\xi)|\sim 1; \quad if\ |\xi|\ges 1/\delta.\\
|\dr(\xi)|\sim \delta|\xi|^3,\ |\dr'(\xi)|\sim \delta|\xi|^2,\
|\dr''(\xi)|\sim \delta |\xi|; \quad if\ |\xi|\les 1/\delta.
\end{array}
\right.
\end{eqnarray}
\end{lemma}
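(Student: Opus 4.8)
The plan is to factor out the $\delta$–dependence and reduce the lemma to elementary properties of one universal function. Put $c=2\pi\delta$ and
\[
h(y)=y\coth y-1,\qquad y\in\R .
\]
A direct computation gives $\dr(\xi)=\frac{\xi}{c}\,h(c\xi)$, hence
\[
\dr'(\xi)=\frac1c\,h(c\xi)+\xi\,h'(c\xi),\qquad
\dr''(\xi)=2h'(c\xi)+c\xi\,h''(c\xi).
\]
Now $h$ extends to a smooth even function on $\R$ with $h(0)=0$ and $h(y)>0$ for $y\neq0$, and it satisfies the one–line identity $h''(y)=2h(y)/\sinh^2 y$. Thus $h$ is strictly convex, $h$ is strictly increasing on $[0,\infty)$, and $h'$ is strictly increasing on $[0,\infty)$ from $h'(0)=0$ up to $\lim_{y\to\infty}h'(y)=1$. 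Since $\dr$ is odd ($h$ even, $h'$ odd, $h''$ even), each of $\abs{\dr(\xi)}$, $\abs{\dr'(\xi)}$, $\abs{\dr''(\xi)}$ depends only on $\abs{\xi}$, so I may assume $\xi>0$.

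Next I would record two–sided bounds for $h,h',h''$ on the ranges $0<y\le1$ and $y\ge1$. For $0<y\le1$: the partial–fraction expansion $\coth y=\frac1y+\sum_{n\ge1}\frac{2y}{y^2+n^2\pi^2}$ yields $h(y)/y^2=\sum_{n\ge1}\frac{2}{y^2+n^2\pi^2}$, which is positive and decreasing in $y$, so $h(y)\sim y^2$ on $(0,1]$; since $y/\sinh y$ is also positive and decreasing with value $1$ at $0$, the identity for $h''$ gives $h''(y)\sim1$ on $(0,1]$; integrating from $0$ then gives $h'(y)\sim y$ on $(0,1]$. For $y\ge1$: $\coth$ is positive and decreasing with $1<\coth y\le\coth1$, so $h(y)=y\coth y-1\sim y$; $h'$ is increasing with $0<h'(1)\le h'(y)<1$, so $h'(y)\sim1$; and, using $h(y)\les y$, $\;0<y\,h''(y)=2y\,h(y)/\sinh^2 y\les (y/\sinh y)^2\le(\sinh 1)^{-2}$, so $y\,h''(y)\les1$.

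To conclude I substitute $y=c\abs{\xi}=2\pi\delta\abs{\xi}$ into the three displayed formulas, splitting at $y=1$, i.e.\ at $\abs{\xi}=(2\pi\delta)^{-1}$ (any fixed positive multiple of $1/\delta$ serves as threshold, the two regimes agreeing where they overlap, $\abs{\xi}\sim1/\delta$). If $\abs{\xi}\ges1/\delta$ then $y\ges1$, so $h(c\xi)\sim c\abs{\xi}$, $h'(c\xi)\sim1$ and $c\abs{\xi}\,h''(c\xi)\les1$, giving $\abs{\dr(\xi)}\sim\abs{\xi}^2$, $\abs{\dr'(\xi)}\sim\abs{\xi}$, $\abs{\dr''(\xi)}\sim1$. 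If $\abs{\xi}\les1/\delta$ then $y\les1$, so $h(c\xi)\sim(c\abs{\xi})^2$, $h'(c\xi)\sim c\abs{\xi}$ and $c\abs{\xi}\,h''(c\xi)\sim c\abs{\xi}$, giving $\abs{\dr(\xi)}\sim c\abs{\xi}^3\sim\delta\abs{\xi}^3$, $\abs{\dr'(\xi)}\sim c\abs{\xi}^2\sim\delta\abs{\xi}^2$, $\abs{\dr''(\xi)}\sim c\abs{\xi}\sim\delta\abs{\xi}$.

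The one step that calls for care is $\dr''$: written out it is a signed sum of three terms, and in the lower bound one must exclude cancellation. The identity $h''=2h/\sinh^2$ is precisely what makes this transparent, since it presents $\dr''(\xi)$ for $\xi>0$ as a sum of two manifestly nonnegative terms (the first of which is $\sim1$, resp.\ $\sim c\abs\xi$, in the two regimes), so the lower bounds cost nothing and only routine upper bounds are left. Everything else is elementary calculus on the fixed functions $\coth y$, $y/\sinh y$ and $\sum_n 2/(y^2+n^2\pi^2)$.
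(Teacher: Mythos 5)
Your proof is correct, and it shares the paper's overall strategy --- factor out the $\delta$-dependence by substituting $y=2\pi\delta\xi$ and reduce the lemma to two-sided asymptotics, at $0$ and at $\infty$, of one universal function --- but the execution differs in two respects worth noting. First, the paper normalizes differently, taking $h(\xi)=[\coth\xi-\frac{1}{\xi}]\xi^2$ so that $\dr(\xi)=\frac{1}{4\pi^2\delta^2}h(2\pi\delta\xi)$; with that choice $\dr'(\xi)=\frac{1}{2\pi\delta}h'(2\pi\delta\xi)$ and $\dr''(\xi)=h''(2\pi\delta\xi)$ exactly, so the bounds on $h,h',h''$ transfer to $\dr$ with no product rule and hence no cancellation issue --- the issue you rightly flag for $\dr''$ and then dispose of by observing that both terms are nonnegative. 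Second, for the delicate point (the sign and size of the second derivative) the paper computes $h''$ explicitly as a quotient of exponentials and proves $h''>0$ by Taylor-expansion comparisons, then reads off $|h''|\sim 1$ resp. $|h''|\sim|\xi|$ from the limits at $0$ and $\infty$ together with positivity and continuity; your identity $h''(y)=2h(y)/\sinh^2 y$ plus the partial-fraction expansion of $\coth$ achieves the same more transparently, and your monotonicity arguments make the two-sided bounds on the full ranges $(0,1]$ and $[1,\infty)$ more explicit than the paper's limit arguments. The only spot where you are slightly terse is the lower bound $h(y)\ges y$ for $y\geq 1$: near $y=1$ the inequality $y\coth y-1\geq y-1$ alone is not enough, but since you have already shown $h$ is increasing with $h(1)>0$, the bound follows at once, so this is a matter of a missing sentence rather than a gap.
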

\begin{proof}
Since $\dr(\cdot)$ is odd, we may assume $\xi>0$. Let
$h(\xi)=[\coth(\xi)-\frac{1}{\xi}]\xi^2$, then we see that
$\dr(\xi)=\rev{4\pi^2 \delta^2}h(2\pi \delta \xi)$. Using Taylor's
expansion, we get
\[h(\xi)=\frac{\sum_{k=0}^\infty (\rev{(2k)!}-\rev{(2k+1)!})2\xi^{2k+1}}{\xi(e^\xi-e^{-\xi})}\xi^2 >0, \quad if\ \xi>0.\]
From $\lim\limits_{\xi\rightarrow 0+}\frac{h(\xi)}{\xi^3}>0$ and
$\lim\limits_{\xi \rightarrow +\infty}\frac{h(\xi)}{\xi^2}>0$, we
get that $|h(\xi)|\sim |\xi|^2$ if $|\xi|\ges 1$ and $|h(\xi)|\sim
|\xi|^3$ if $|\xi|\les 1$. Direct computations show that
\begin{eqnarray*}
&&h'(\xi)=\brk{\rev{\xi^2}-\frac{4}{(e^\xi-e^{-\xi})^2}}\xi^2+[\coth(\xi)-\frac{1}{\xi}]2\xi;\\
&&h''(\xi)=\frac{8\xi^2(e^\xi+e^{-\xi})-16\xi(e^\xi-e^{-\xi})+2(e^\xi+e^{-\xi})(e^\xi-e^{-\xi})^2}{(e^\xi-e^{-\xi})^3}.
\end{eqnarray*}
Using Taylor's expansion, we easily see that if $\xi>0$ then
\[h''(\xi)>\frac{16\xi^2(e^\xi+e^{-\xi})-16\xi(e^\xi-e^{-\xi})}{(e^\xi-e^{-\xi})^3}>0.\]
From $\lim\limits_{\xi\rightarrow 0+}\frac{h''(\xi)}{\xi}>0$ and
$\lim\limits_{\xi \rightarrow +\infty}h(\xi)>0$, we get that
$|h''(\xi)|\sim 1$ if $|\xi|\ges 1$ and $|h''(\xi)|\sim |\xi|$ if $
|\xi|\les 1$. Similarly, we get $|h'(\xi)|\sim |\xi|$ if $|\xi|\ges
1$ and $|h'(\xi)|\sim |\xi|^2$ if $|\xi|\les 1$. Therefore, we
complete the proof of the lemma.
\end{proof}

For $\xi_1,\xi_2,\xi_3\in \R$, let
\begin{eqnarray}\label{eq:reso}
\Omega(\xi_1,\xi_2,\xi_3)=\dr(\xi_1)+\dr(\xi_2)+\dr(\xi_3)-\dr(\xi_1+\xi_2+\xi_3).
\end{eqnarray}
This is the resonance function which plays crucial rule in the
trilinear estimate. See \cite{Taokz} for more perspective
discussion. We prove an estimate on the resonance in the following
lemma.
\begin{lemma}\label{esresonance}
Let $\delta\geq c_0$. Assume $|\xi_1|\leq |\xi_2|\leq |\xi_3|$,
$|\xi_1|\ll |\xi_3|$, $|\xi_1+\xi_2+\xi_3|\sim |\xi_3|$ and
$|\xi_3|\gg 1$. Then we have
\begin{eqnarray}\label{eq:esreso}
|\Omega(\xi_1,\xi_2,\xi_3)|\sim |\xi_1+\xi_2|\cdot |\xi_3|.
\end{eqnarray}
\end{lemma}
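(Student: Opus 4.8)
The approach is to show that, modulo a term that cancels identically in $\Omega$ and a remainder that is exponentially small in the depth, the symbol $\dr$ coincides with the modified Benjamin--Ono symbol, so that \eqref{eq:esreso} reduces to the analogous resonance estimate for the mBO equation. Using $\coth(2\pi\delta\xi)\,\xi^2=\sgn(\xi)\xi^2+[\coth(2\pi\delta\xi)-\sgn(\xi)]\xi^2$ and $\frac{1}{2\pi\delta\xi}\xi^2=\frac{\xi}{2\pi\delta}$, write
\[
\dr(\xi)=\phi(\xi)-\frac{\xi}{2\pi\delta}+e_\delta(\xi),\qquad \phi(\xi):=\sgn(\xi)\xi^2,\qquad e_\delta(\xi):=[\coth(2\pi\delta\xi)-\sgn(\xi)]\,\xi^2,
\]
where $e_\delta$ is odd and, since $e^t-1\ge t$ for $t\ge0$, satisfies $|e_\delta(\xi)|\le|\xi|/(2\pi\delta)$. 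Put $\xi_4:=-(\xi_1+\xi_2+\xi_3)$, so that $\xi_1+\xi_2+\xi_3+\xi_4=0$ and, by hypothesis, $|\xi_4|\sim|\xi_3|\gg\max(1,|\xi_1|)$. The linear term contributes $0$ to $\Omega$ (this is the cancelation mentioned in the introduction), hence
\[
\Omega(\xi_1,\xi_2,\xi_3)=\Omega_0+E,\qquad \Omega_0:=\sum_{i=1}^{4}\phi(\xi_i),\qquad E:=\sum_{i=1}^{4}e_\delta(\xi_i).
\]
I will prove $|\Omega_0|\sim|\xi_1+\xi_2|\,|\xi_3|$ and $|E|\les\delta^{-1}|\xi_1+\xi_2|$; since $|\xi_3|\gg1$ and $\delta\ges1$, the latter is $\ll|\xi_1+\xi_2|\,|\xi_3|$, which yields the lemma.

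For $\Omega_0$, I first observe that $\xi_3$ and $\xi_4$ must have opposite signs. Indeed, suppose they share a sign; flipping all signs if necessary (this sends $\Omega_0\mapsto-\Omega_0$ and leaves every hypothesis, and the assertion, invariant) we may assume $\xi_3,\xi_4>0$, and then $\xi_2=-(\xi_3+\xi_4)-\xi_1$, together with $|\xi_4|\gg|\xi_1|$, forces $|\xi_2|\ge\xi_3+\xi_4-|\xi_1|>\xi_3=|\xi_3|$, contradicting $|\xi_2|\le|\xi_3|$. Assuming moreover $\xi_3>0$ (flip all signs otherwise), write $\xi_3=A>0$, $\xi_4=-B<0$ with $A\sim B\gg\max(1,|\xi_1|)$, and set $c:=A-B$, so that $\xi_1+\xi_2=-c$, $|c|=|\xi_1+\xi_2|\les A$, and $A+B-|c|=2\min(A,B)\sim A$. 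Since $\Omega_0=\xi_1|\xi_1|+\xi_2|\xi_2|+c(A+B)$, there are two cases. If $\xi_1$ and $\xi_2$ have the same sign, then $|\xi_1|+|\xi_2|=|c|$, $\xi_1|\xi_1|+\xi_2|\xi_2|=-\sgn(c)(\xi_1^2+\xi_2^2)$, and $\xi_1^2+\xi_2^2\le(|\xi_1|+|\xi_2|)^2=c^2$, so
\[
|\Omega_0|=|c|(A+B)-(\xi_1^2+\xi_2^2)\ge|c|\big(A+B-|c|\big)=2|c|\min(A,B),
\]
which is $\ges|c|A$ and also $\le|c|(A+B)\les|c|A$. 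If $\xi_1$ and $\xi_2$ have opposite signs, then, using $\big|\,|\xi_1|-|\xi_2|\,\big|=|\xi_1+\xi_2|=|c|$, a short computation gives $\xi_1|\xi_1|+\xi_2|\xi_2|=-c\,(|\xi_1|+|\xi_2|)$, hence $\Omega_0=c\,\big(A+B-|\xi_1|-|\xi_2|\big)$; since $|\xi_2|\le A$ and $|\xi_1|\ll A\sim B$, one has $B-|\xi_1|\le A+B-|\xi_1|-|\xi_2|\le A+B$ with $B-|\xi_1|\ges A$ and $A+B\les A$, so again $|\Omega_0|\sim|c|A=|\xi_1+\xi_2|\,|\xi_3|$.

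For $E$ I pull the factor $|\xi_1+\xi_2|$ out of each of the pairs $\{\xi_3,\xi_4\}$ and $\{\xi_1,\xi_2\}$ by a mean value estimate. With $\psi(y):=[\coth y-1]y^2=\frac{2y^2}{e^{2y}-1}$ on $(0,\infty)$, which extends to a $C^1$ function on $[0,\infty)$ with bounded derivative, we have $e_\delta(\xi)=\sgn(\xi)\,(4\pi^2\delta^2)^{-1}\,\psi(2\pi\delta|\xi|)$. Since $\xi_3\xi_4<0$ and $\big|\,|\xi_3|-|\xi_4|\,\big|=|\xi_3+\xi_4|=|\xi_1+\xi_2|$,
\[
|e_\delta(\xi_3)+e_\delta(\xi_4)|=(4\pi^2\delta^2)^{-1}\big|\psi(2\pi\delta|\xi_3|)-\psi(2\pi\delta|\xi_4|)\big|\les\delta^{-1}\big|\,|\xi_3|-|\xi_4|\,\big|=\delta^{-1}|\xi_1+\xi_2|.
\]
For $\{\xi_1,\xi_2\}$: if they have opposite signs the same argument gives $|e_\delta(\xi_1)+e_\delta(\xi_2)|\les\delta^{-1}\big|\,|\xi_1|-|\xi_2|\,\big|=\delta^{-1}|\xi_1+\xi_2|$; if they have the same sign then the crude bound yields $|e_\delta(\xi_1)+e_\delta(\xi_2)|\le(|\xi_1|+|\xi_2|)/(2\pi\delta)=|\xi_1+\xi_2|/(2\pi\delta)$. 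Hence $|E|\les\delta^{-1}|\xi_1+\xi_2|$, and together with the bound for $\Omega_0$ this gives $|\Omega|\sim|\xi_1+\xi_2|\,|\xi_3|$.

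The step I expect to be the main obstacle is the sign analysis for $\Omega_0$: recognizing that the hypotheses force $\xi_3$ and $\xi_4=-(\xi_1+\xi_2+\xi_3)$ to have opposite signs, and then, in the case where $\xi_1$ and $\xi_2$ share a sign, checking that the near-cancellation $|c|(A+B)-(\xi_1^2+\xi_2^2)$ does not degenerate but is bounded below by $2|c|\min(A,B)\sim|\xi_1+\xi_2|\,|\xi_3|$ — this uses $\xi_1^2+\xi_2^2\le(|\xi_1|+|\xi_2|)^2=|\xi_1+\xi_2|^2$ together with $A\sim B$. The $\delta$-dependent error $E$, which is the genuinely new feature relative to the mBO equation, is by contrast routine once one uses the oddness of $e_\delta$ and the mean value estimate to extract $|\xi_1+\xi_2|$ rather than estimating each $e_\delta(\xi_i)$ separately.
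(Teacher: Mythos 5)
Your argument is correct, and it takes a genuinely different route from the paper's proof of Lemma \ref{esresonance}. The paper argues in two cases: for $|\xi_2|\ll|\xi_3|$ it applies the mean value theorem directly to $\dr$, using $|\dr'(\xi)|\sim|\xi|$ from Lemma \ref{elemes}, so that $|\dr(\xi_3)-\dr(\xi_1+\xi_2+\xi_3)|\sim|\xi_1+\xi_2|\,|\xi_3|$ dominates the remaining terms; for $|\xi_2|\ges|\xi_3|$ it expands $\dr(\xi_2)+\dr(\xi_3)-\dr(\xi_2+\xi_3)$ in terms of $\coth$ (the linear $\xi/(2\pi\delta)$ part cancels inside this combination) and identifies the cross term $-2\coth(2\pi\delta(\xi_2+\xi_3))\xi_2\xi_3$ as the dominant piece. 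You instead decompose the symbol once and for all, $\dr(\xi)=\sgn(\xi)\xi^2-\xi/(2\pi\delta)+e_\delta(\xi)$, observe that the linear part drops out of $\Omega$ because $\xi_1+\xi_2+\xi_3+\xi_4=0$, compute the Benjamin--Ono resonance $\sum_i\xi_i|\xi_i|$ exactly by a sign analysis, and bound $E=\sum_i e_\delta(\xi_i)$ in pairs using oddness and a Lipschitz bound on $\psi(y)=2y^2/(e^{2y}-1)$, which extracts the factor $|\xi_1+\xi_2|$ and yields $|E|\les\delta^{-1}|\xi_1+\xi_2|$. Your version makes explicit the cancelation with the $\xi/\delta$ term alluded to in the introduction, gives the sharper quantitative statement that $\Omega$ equals the mBO resonance up to an error $O(\delta^{-1}|\xi_1+\xi_2|)$, and treats all sign configurations uniformly; in particular, when $\xi_2\xi_3<0$ with $|\xi_2|\sim|\xi_3|$, the paper's termwise claim that the two $\coth$-difference pieces are $\ll|\xi_3|^2$ is not literally accurate (the piece carrying $\xi_2^2$ is then itself of size $\sim|\xi_3|^2$), and the lower bound really comes from combining it with the cross term, which your exact evaluation of $\sum_i\xi_i|\xi_i|$ handles automatically. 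What the paper's route buys is brevity: in the low-high regime a single mean value application to $\dr$ suffices, with no need to split off the mBO symbol or to estimate the tail $e_\delta$.
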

\begin{proof}
We consider first the case that $|\xi_2|\ll |\xi_3|$. From the mean
value formula we see that
\[|\dr(\xi_3)-\dr(\xi_1+\xi_2+\xi_3)|\sim |\xi_3|\cdot|\xi_1+\xi_2|, \ |\dr(\xi_1)+\dr(\xi_2)|\ll |\xi_3|\cdot|\xi_1+\xi_2|,\]
which immediately gives \eqref{eq:esreso} in this case.

We consider now the case $|\xi_2|\ges |\xi_3|$. Then it suffices to
show that $|\Omega(\xi_1,\xi_2,\xi_3)|\sim  |\xi_3|^2$. We get from
the definition that
\begin{eqnarray*}
&&\dr(\xi_2)+\dr(\xi_3)-\dr(\xi_2+\xi_3)\\
&=&\coth(2\pi \delta \xi_2){\xi_2^2}+\coth(2\pi \delta
\xi_3){\xi_3^2}-\coth(2\pi \delta (\xi_2+\xi_3)){(\xi_2+\xi_3)^2}\\
&=&[\coth(2\pi \delta \xi_2)-\coth(2\pi \delta
(\xi_2+\xi_3))]\xi_2^2-\coth(2\pi \delta
(\xi_2+\xi_3))2\xi_2\xi_3\\
&&+[\coth(2\pi \delta \xi_3)-\coth(2\pi \delta
(\xi_2+\xi_3))]\xi_3^2=:I+II+III.
\end{eqnarray*}
It is easy to see that $|I|,\ |III|\ll |\xi|^2$ and $|II|\sim
|\xi_3|^2$, then \eqref{eq:esreso} follows from the fact that
$|\dr(\xi_1+\xi_2+\xi_3)-\dr(\xi_2+\xi_3)|\ll |\xi_3|^2$.
\end{proof}

From the definitions we see that if $k\in \Z_+$ and $f_k \in Z_k$
then $f_k$ can be written in the form
\begin{eqnarray}
\left \{
\begin{array}{l}
f_k=\sum_{j=0}^{\infty}f_{k,j}+g_k;\\
\sum_{j=0}^{\infty}2^{j/2}\beta_{k,j}\norm{f_{k,j}}_{L^2}+\norm{g_k}_{Y_k}\leq
2\norm{f_k}_{Z_k},
\end{array}
\right.
\end{eqnarray}
such that $f_{k,j}$ is supported in $D_{k,j}$ and $g_k$ is supported
in $\cup_{j=0}^{k-1}D_{k,j}$ (if $k\leq M-1$ then $g_k\equiv 0$). In
analogy with Lemma 4.1 in \cite{IK} we have the following
\begin{lemma}\label{basicproperties} (a) If $m, m':\R\rightarrow \C$,
$k\in \Z_+$, and $f_k\in Z_k$ then
\begin{eqnarray}
\left \{
\begin{array}{l}
\norm{m(\xi)f_k(\xi,\tau)}_{Z_k}\leq C\norm{\ft^{-1}(m)}_{L^1(\R)}\norm{f_k}_{Z_k};\\
\norm{m'(\tau)f_k(\xi,\tau)}_{Z_k}\leq
C\norm{m'}_{L^\infty(\R)}\norm{f_k}_{Z_k}.
\end{array}
\right.
\end{eqnarray}

(b) If $\delta\geq c_0$, $k\in \Z_+$, $j\geq 0$, and $f_k\in Z_k$
then
\begin{equation}
\norm{\eta_j(\tau-\dr(\xi))f_k(\xi,\tau)}_{X_k}\les
\norm{f_k}_{Z_k}.
\end{equation}

(c) If $\delta\geq c_0$, $k\geq 1$, $j\in [0,k]$, and $f_k$ is
supported in $I_k\times \R$ then
\begin{equation}
\norm{\ft^{-1}[\eta_{\leq
j}(\tau-\dr(\xi))f_k(\xi,\tau)]}_{L_x^1L_t^2}\les
\norm{\ft^{-1}(f_k)}_{L_x^1L_t^2}.
\end{equation}
\end{lemma}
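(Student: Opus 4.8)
\textbf{Proof proposal for Lemma \ref{basicproperties}.}

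The plan is to treat the three parts in order of increasing delicacy. For part (a), the multiplier in $\xi$ acts diagonally on the frequency variable while leaving $\tau$ untouched; I would write $m(\xi)f_k = \ft^{-1}(m) * f_k$ (convolution in $x$ only) and observe that this convolution commutes with the cutoffs $\eta_j(\tau-\dr(\xi))$ only up to harmless error — but in fact, since multiplication by $m(\xi)$ does not change the support in $\xi$ (it still lies in $I_k$) nor the support in $\tau-\dr(\xi)$, one can bound each dyadic piece $\norm{\eta_j(\tau-\dr(\xi))m(\xi)f_k}_{L^2}$ using Minkowski / Young's inequality $\norm{g*h}_{L^2_x}\le \norm{g}_{L^1_x}\norm{h}_{L^2_x}$ applied fibrewise in $\tau$. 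For the $Y_k$ component one uses that $\ft^{-1}[(\tau-\dr(\xi)+i)m(\xi)f] = \ft^{-1}(m) *_x \ft^{-1}[(\tau-\dr(\xi)+i)f]$ and then Young in $L^1_xL^2_t$. The $\tau$-multiplier bound is similar but easier: $m'(\tau)$ commutes with the $\xi$-support, and on each shell $\tau-\dr(\xi)\in I_j$ we simply pull out $\norm{m'}_{L^\infty}$; for $Y_k$ one writes $(\tau-\dr(\xi)+i)m'(\tau)f = m'(\tau)\cdot(\tau-\dr(\xi)+i)f$ and uses that multiplication by $\ft_t^{-1}(m')$ is bounded on $L^1_xL^2_t$ when $m'\in L^\infty$ — more carefully, one should argue that the operator $f\mapsto \ft_t^{-1}(m')*_t f$ need not be $L^1_xL^2_t$-bounded for general $m'\in L^\infty$, so instead I would exploit that $f$ is already frequency-localized and reduce to the statement via the $X_k$-decomposition, or simply invoke that in the applications $m'$ will be a smooth symbol. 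I expect part (a) to be essentially bookkeeping, mirroring \cite[Lemma 4.1]{IK}.

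For part (b), the point is that applying the sharp modulation cutoff $\eta_j(\tau-\dr(\xi))$ to an element of $Z_k=X_k+Y_k$ lands back in $X_k$ with controlled norm. On the $X_k$ summand this is immediate since $\eta_j(\tau-\dr(\xi))\eta_{j'}(\tau-\dr(\xi))$ is supported where $|j-j'|\le 1$, so the cutoff only touches $O(1)$ dyadic pieces and the coefficients $2^{j'/2}\beta_{k,j'}$ on those pieces are comparable. The substantive case is $f_k\in Y_k$: here I would use that $f_k$ is supported in $\bigcup_{j'=0}^{k-1}D_{k,j'}$, so $\tau-\dr(\xi)$ ranges over $|\tau-\dr(\xi)|\les 2^k$, and I need to bound $2^{j/2}\beta_{k,j}\norm{\eta_j(\tau-\dr(\xi))f_k}_{L^2}$ by $\norm{f_k}_{Y_k} = 2^{-k/2}\norm{\ft^{-1}[(\tau-\dr(\xi)+i)f_k]}_{L^1_xL^2_t}$. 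Writing $\eta_j(\tau-\dr(\xi))f_k = \eta_j(\tau-\dr(\xi))(\tau-\dr(\xi)+i)^{-1}\cdot(\tau-\dr(\xi)+i)f_k$, the first factor is a bounded Fourier multiplier (in $\tau$) with $L^1_t$-norm of its inverse transform $\les 2^{-j}$ uniformly, so by part (a)-type reasoning and Bernstein in $t$ (converting $L^1_xL^2_t$ to $L^2_{x,t}$ costs $2^{j/2}$ since the $\tau$-support has width $\sim 2^j$) one gets $\norm{\eta_j(\tau-\dr(\xi))f_k}_{L^2}\les 2^{-j}\cdot 2^{j/2}\norm{\ft^{-1}[(\tau-\dr(\xi)+i)f_k]}_{L^1_xL^2_t} = 2^{-j/2}\cdot 2^{k/2}\norm{f_k}_{Y_k}$. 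Then $2^{j/2}\beta_{k,j}\norm{\eta_j(\tau-\dr(\xi))f_k}_{L^2}\les \beta_{k,j}2^{k/2}\norm{f_k}_{Y_k}$, and summing in $j\le k$ one must check $\sum_{j\le k}\beta_{k,j}2^{k/2} \les$ something — here $\beta_{k,j}=1+2^{2(j-2k)/5}$ is bounded by $2$ for $j\le k\le 2k$, so $\sum_{j\le k}\beta_{k,j}\les k$, which is not quite uniform; the resolution is that one does not sum but rather the claim $\norm{\eta_j(\tau-\dr(\xi))f_k}_{X_k}\les\norm{f_k}_{Z_k}$ is a single-$j$ statement, so $\norm{\eta_j(\tau-\dr(\xi))f_k}_{X_k} = 2^{j/2}\beta_{k,j}\norm{\eta_j(\tau-\dr(\xi))f_k}_{L^2}\les 2^{k/2}\norm{f_k}_{Y_k}$ — and $2^{k/2}$ is absorbed by the definition of $\norm{f_k}_{Y_k}$ which already carries $2^{-k/2}$ against an $L^1_xL^2_t$ norm. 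I will double-check the Bernstein step carefully since that is where the $2^{j/2}$ versus $2^{k/2}$ accounting happens.

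For part (c), I want to show the low-modulation truncation $\eta_{\le j}(\tau-\dr(\xi))$ is bounded on $L^1_xL^2_t$ for $j\le k$, given $\xi$-support in $I_k$. The natural approach: $\eta_{\le j}(\tau-\dr(\xi))$ has convolution kernel (in $(x,t)$) given by $K_j(x,t) = \ft^{-1}_\tau[\eta_{\le j}(\tau)](t)\cdot(\text{phase } e^{it\dr(\xi)}\text{-twisted convolution in }x)$. The obstruction — and I expect this to be the genuinely hard part — is that $\dr(\xi)$ is not affine, so $e^{-it\dr(\xi)}$ is not a pure modulation and the multiplier $\eta_{\le j}(\tau-\dr(\xi))$ does not factor as (function of $t$)$\times$(function of $x$). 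The standard trick (as in \cite{IK, IKT}) is to use that on $I_k$ one has $|\dr''(\xi)|\sim 1$ by Lemma \ref{elemes} (since $k\ge 1$ means $|\xi|\sim 2^k\ges 1\ges 1/\delta$), so on each interval of length $\sim 2^{-k}$ in $\xi$ the function $\dr$ is within $O(1)$ of its linearization, i.e. $|\dr(\xi)-\dr(\xi_0)-\dr'(\xi_0)(\xi-\xi_0)|\les 1$ for $|\xi-\xi_0|\les 2^{-k}$; wait — on $I_k$ of length $\sim 2^k$ this fails, so one must decompose $I_k$ into $\sim 2^{2k}$ intervals of length $2^{-k}$. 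A cleaner route: write $\eta_{\le j}(\tau-\dr(\xi)) = \sum_{j'\le j}\eta_{j'}(\tau-\dr(\xi))$ and for each $j'$ use a kernel estimate. Actually the right tool is the one used in \cite[proof of Lemma 4.1(c)]{IK}: foliate by the value of $\dr'(\xi)$, or equivalently note that $\ft^{-1}[\eta_{\le j}(\tau-\dr(\xi))\widehat{g}(\xi,\tau)]$ can be realized as $\int \psi_j(t-s)[\U(t-s)\text{(something)}] ds$ type formula, reducing to the dispersive estimate / energy estimate for $\U(t)$ composed with a time-localized bump. I would follow \cite{IK} closely here: the key inequality is that the kernel of $\eta_{\le j}(\tau-\dr(\xi))$ restricted to $I_k$, viewed as acting in $L^1_xL^2_t$, has operator norm $O(1)$ because the $t$-spread induced is $\sim 2^{-j}\ges 2^{-k}$ which is longer than the $x$-to-$x$ transport scale, making the $L^1_x$ structure stable. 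This citation-driven adaptation of \cite[Lemma 4.1]{IK} is the expected main obstacle; everything else is routine once the frequency/modulation localizations are tracked.
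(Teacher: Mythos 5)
Your part (a) is fine (in fact your worry about the $\tau$--multiplier on the $Y_k$ component is unnecessary: multiplication by $m'(\tau)$ acts in the $t$ variable only, so for each fixed $x$ Plancherel in $t$ gives the bound $\norm{m'}_{L^\infty}$ on $L^2_t$, and the outer $L^1_x$ integration is untouched). The genuine gap is in part (b), and it sits exactly where you said you would "double-check": the $2^{j/2}$ versus $2^{k/2}$ accounting. Writing $G=\ft^{-1}[(\tau-\dr(\xi)+i)f_k]$, your chain gives $\norm{\eta_j(\tau-\dr(\xi))f_k}_{L^2}\les 2^{-j}\cdot 2^{j/2}\norm{G}_{L_x^1L_t^2}$ and hence $\norm{\eta_j(\tau-\dr(\xi))f_k}_{X_k}\les 2^{k/2}\norm{f_k}_{Y_k}$, which is weaker than the lemma by a factor $2^{k/2}$; the remark that this factor "is absorbed by the $2^{-k/2}$ in the definition of $\norm{\cdot}_{Y_k}$" double-counts that weight, since it is already inside $\norm{f_k}_{Y_k}$ (indeed $2^{k/2}\norm{f_k}_{Y_k}=\norm{G}_{L^1_xL^2_t}$, which is not controlled by $\norm{f_k}_{Z_k}$). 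Moreover there is no "Bernstein in $t$" gain to be had: Plancherel in $t$ is an isometry. The missing idea is a gain in $\xi$ at fixed $\tau$: from $|\widehat{G}(\xi,\tau)|\leq\norm{\ft_t G(\cdot,\tau)}_{L^1_x}$ one has $\norm{\widehat G}_{L^\infty_\xi L^2_\tau}\leq\norm{G}_{L^1_xL^2_t}$, and the slice $\{\xi\in I_k:\ \tau-\dr(\xi)\in I_j\}$ has measure $\les 2^{j-k}$ by the change of variable $\mu=\dr(\xi)$ and $|\dr'(\xi)|\sim 2^k$ on $I_k$ (Lemma \ref{elemes}; this is precisely where $\delta\geq c_0$ enters). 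Hence $\norm{1_{D_{k,j}}\widehat G}_{L^2}\les 2^{(j-k)/2}\norm{G}_{L^1_xL^2_t}$, so $\norm{\eta_j(\tau-\dr(\xi))f_k}_{L^2}\les 2^{-j}2^{(j-k)/2}\cdot 2^{k/2}\norm{f_k}_{Y_k}=2^{-j/2}\norm{f_k}_{Y_k}$, and since $\beta_{k,j}\sim 1$ for $j\leq k$ this gives the claimed $\norm{\eta_j(\tau-\dr(\xi))f_k}_{X_k}\les\norm{f_k}_{Y_k}$ with no loss. (This is the paper's argument, carried out after reducing via the canonical representation of $Y_k$ elements as in \eqref{eq:gkform}.)

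Part (c) is also not actually proved in your proposal: the discussion of foliations, $2^{-k}$--intervals (which would produce $\sim 2^{2k}$ pieces and be lossy) and the heuristic about a $t$--spread $2^{-j}\ges 2^{-k}$ does not yield the operator bound, and the relevant phenomenon is spreading in $x$ at scale $2^{k-j}$, not in $t$. The paper's proof is short and self-contained: by Plancherel it suffices to show
$\normo{\int_\R e^{ix\xi}\chi_k(\xi)\eta_{\leq j}(\tau-\dr(\xi))\,d\xi}_{L^1_xL^\infty_\tau}\leq C$,
and the change of variable $\alpha=\tau-\dr(\xi)$ together with integration by parts (using $|\dr'|\sim 2^k$, $|\dr''|\sim 1$ on $I_k$ from Lemma \ref{elemes}) gives the pointwise kernel bound $\les 2^{j-k}\big(1+(2^{j-k}x)^2\big)^{-1}$, uniformly in $\tau$, whose $L^1_x$ norm is $O(1)$. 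You correctly identified \cite{IK} as the model, but deferring to it without this kernel estimate leaves the step unestablished.
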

\begin{proof}
It is easy to see that part (a) follows directly from Plancherel
theorem and the definitions.

For part (b), we may assume $k\geq M$, $f_k=g_k\in Y_k$, and $j\leq
k$. From the definition we see that if $g_k \in Y_k$ then $g_k$ can
be written in the form
\begin{eqnarray}\label{eq:gkform}
\left\{\begin{array}{l}
g_k(\xi,\tau)=2^{k/2}\chi_{[k-1,k+1]}(\xi)(\tau-\dr(\xi)+i)^{-1}\eta_{\leq
k}(\tau-\dr(\xi))\ft_x(h)(\xi,\tau);\\
\norm{g_k}_{Y_k}=C\norm{h}_{L_x^1L_\tau^2}.
\end{array}\right.
\end{eqnarray}
Thus from Plancherel's equality we get
\begin{eqnarray*}
&&\norm{\eta_j(\tau-\dr(\xi))g_k(\xi,\tau)}_{X_k}\\
&\les& \sup_{0\leq
j\leq k}
2^{-j/2}2^{k/2}\norm{\chi_k(\xi)\eta_j(\tau-\dr(\xi))\ft_x(h)(\xi,\tau)}_{L_{\xi,\tau}^2}\\
&\les&\sup_{0\leq j\leq k}
2^{-j/2}2^{k/2}\norm{\ft_x^{-1}[\chi_k(\xi)\eta_j(\tau-\dr(\xi))]}_{L_\tau^\infty
L_x^2}\norm{h}_{L_x^1L_\tau^2}.
\end{eqnarray*}
On the other hand, by changing of variable $\mu=\dr(\xi)$ we get
from Lemma \ref{elemes} and $\delta\geq c_0$ that
\[\norm{\chi_k(\xi)\eta_j(\tau-\dr(\xi))}_{L_\tau^\infty
L_\xi^2}\les 2^{j-k},\] which completes the proof of part (b).

For part (c), from Plancherel's equality, it suffices to prove that
\begin{eqnarray}\label{eq:l33c}
\normo{\int_\R e^{ix\xi} \chi_k(\xi)\eta_{\leq
j}(\tau-\dr(\xi))d\xi}_{L_x^1L_\tau^\infty}\leq C.
\end{eqnarray}
We may assume $k\geq M$ in proving \eqref{eq:l33c}. By the change of
variable $\tau-\dr(\xi)=\alpha$, integration by parts and Lemma
\ref{elemes} we obtain that
\begin{eqnarray*}
\aabs{\int_\R e^{ix\xi} \chi_k(\xi)\eta_{\leq
j}(\tau-\dr(\xi))d\xi}\les \  \frac{2^{j-k}}{1+(2^{j-k}x)^2}
\end{eqnarray*}
which suffices to prove \eqref{eq:l33c}.
\end{proof}

We study now the embedding properties of the spaces $Z_k$ which is
important in the trilinear estimates.
\begin{lemma}\label{embedgen} Assume $\delta\geq c_0$. Let
$k\in \Z_+$, $s\in \R$ and $I\subset \R$ be an interval. Let $Y$ be
$L_x^pL_{t\in I}^q$ or $L_{t\in I}^qL_x^p$ for some $1\leq p,\ q\leq
\infty$ with the property that
\[\norm{\U(t)f}_Y\les 2^{ks}\norm{f}_{L^2(\R)}\]
for all $f\in L^2(\R)$ with $\widehat{f}$ supported in ${I}_k$. Then
we have that if $f_k\in Z_k$
\begin{equation}
\norm{\ft^{-1}(f_k)}_{Y}\les 2^{ks}\norm{f_k}_{Z_k}.
\end{equation}
\end{lemma}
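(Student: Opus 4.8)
The plan is to reduce the statement for a general $f_k \in Z_k$ to the two constituent pieces of the decomposition $Z_k = X_k + Y_k$, and to handle each with a standard ``$l^1$-summation over modulation'' argument. So first I would write $f_k = \sum_{j\geq 0} f_{k,j} + g_k$ as recorded just before Lemma \ref{basicproperties}, with $f_{k,j}$ supported in $D_{k,j}$, $g_k$ supported in $\cup_{j<k} D_{k,j}$, and $\sum_j 2^{j/2}\beta_{k,j}\norm{f_{k,j}}_{L^2} + \norm{g_k}_{Y_k} \leq 2\norm{f_k}_{Z_k}$. By the triangle inequality in $Y$ it suffices to bound $\norm{\ft^{-1}(f_{k,j})}_Y$ for each $j$ and $\norm{\ft^{-1}(g_k)}_Y$ separately, and then sum.

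For the $X_k$ piece, the key point is that a function supported in a single modulation annulus $D_{k,j}$ can be written, via the Fourier inversion in $\tau$, as a superposition of translated free evolutions: $\ft^{-1}(f_{k,j})(x,t) = c\int_{\abs{\alpha}\sim 2^j} e^{it\alpha}\,[\U(t)h_{\alpha}](x)\,d\alpha$ where $h_{\alpha}$ is defined by $\widehat{h_{\alpha}}(\xi) = f_{k,j}(\xi, \alpha + \dr(\xi))$, and $\widehat{h_\alpha}$ is supported in $I_k$. Applying the hypothesis $\norm{\U(t)h_\alpha}_Y \les 2^{ks}\norm{h_\alpha}_{L^2}$ inside the integral (note the factor $e^{it\alpha}$ is harmless since $Y$ is an $L^p_xL^q_t$ or $L^q_tL^p_x$ norm and $\abs{e^{it\alpha}}=1$), then Minkowski's inequality to pull the norm inside the $d\alpha$-integral, and Cauchy--Schwarz in $\alpha$ over the interval of length $\sim 2^j$, gives
\begin{equation*}
\norm{\ft^{-1}(f_{k,j})}_Y \les 2^{ks} 2^{j/2} \norm{f_{k,j}}_{L^2}.
\end{equation*}
Since $\beta_{k,j}\geq 1$, summing in $j$ is controlled by $\sum_j 2^{j/2}\beta_{k,j}\norm{f_{k,j}}_{L^2} \leq 2\norm{f_k}_{Z_k}$, which is exactly what we want. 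Here the time interval $I$ plays no role beyond restriction, and the restriction to $t\in I$ only decreases the norm, so the estimate for $\U(t)f$ on $I$ is inherited from the global one.

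For the $Y_k$ piece (only present when $k\geq M$), I would use the explicit representation \eqref{eq:gkform}: $g_k(\xi,\tau) = 2^{k/2}\chi_{[k-1,k+1]}(\xi)(\tau-\dr(\xi)+i)^{-1}\eta_{\leq k}(\tau-\dr(\xi))\ft_x(h)(\xi,\tau)$ with $\norm{g_k}_{Y_k} = C\norm{h}_{L^1_xL^2_\tau}$. Taking the inverse Fourier transform in $\tau$ of the multiplier $(\tau-\dr(\xi)+i)^{-1}\eta_{\leq k}(\tau-\dr(\xi))$ produces, for each fixed $\xi$, a kernel in $t$ that is an $L^1_t$ function with norm $O(1)$ uniformly in $\xi$ (the truncation $\eta_{\leq k}$ costs at most a harmless logarithmic-free constant because one integrates $\abs{\alpha+i}^{-1}$ against a bump of height one, then against the full-line kernel of $(\alpha+i)^{-1}$ whose $\ft^{-1}_\tau$ is essentially $e^{-t}\mathbf 1_{t>0}\in L^1_t$); this is morally part (c) of Lemma \ref{basicproperties}. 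Then $\ft^{-1}(g_k)$ is a convolution in $t$ of this $O(1)$-in-$L^1_t$ kernel with $2^{k/2}\ft^{-1}_x[\chi_{[k-1,k+1]}(\xi)\ft_x h]$, and I can write it as an average of space translates of $\U(t)$ applied to (frequency-localized pieces of) $h(\cdot, t')$. Invoking the hypothesis $\norm{\U(t)\phi}_Y \les 2^{ks}\norm{\phi}_{L^2}$ with $\phi$ ranging over these localized slices and using Minkowski to move the $L^1_xL^2_\tau$-type norm of $h$ outside, the $2^{k/2}$ is precisely what converts an $L^2_x$ bound into the $L^1_x$-based $Y_k$ norm (by Bernstein on an interval of length $\sim 2^k$, $\norm{P_k\phi}_{L^2_x} \les 2^{k/2}\norm{P_k\phi}_{L^1_x}$ — wait, that is backwards; rather one uses that localizing $h$ in $x$ to unit scale and summing, $\ell^1$ in the $x$-pieces, costs the $2^{k/2}$ against $\norm{h}_{L^1_xL^2_t}$ via Cauchy--Schwarz on each unit interval). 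The upshot is $\norm{\ft^{-1}(g_k)}_Y \les 2^{ks}\norm{h}_{L^1_xL^2_\tau} \sim 2^{ks}\norm{g_k}_{Y_k}$.

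I expect the $Y_k$ piece to be the main obstacle: getting the bookkeeping of the $2^{k/2}$ factor exactly right, and making rigorous the passage from the mixed-norm convolution structure of $\ft^{-1}(g_k)$ to an average of free evolutions to which the hypothesis on $\U(t)$ applies, requires care — in particular one must check that the smoothing-type norm $Y$ (when it is $L^1_xL^2_t$) interacts correctly with the $L^1_x$-based definition of $Y_k$, and that the truncation $\eta_{\leq k}$ in \eqref{eq:gkform} genuinely costs nothing, which is where part (c) of the previous lemma (or its proof technique — change of variables $\tau - \dr(\xi) = \alpha$ and integration by parts, using Lemma \ref{elemes}) is used. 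The $X_k$ piece, by contrast, is the routine Duhamel/superposition argument that appears in all $X^{s,b}$-type theories and should go through cleanly.
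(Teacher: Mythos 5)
Your treatment of the $X_k$ piece is correct and is essentially the paper's own argument (foliation over modulation, Minkowski's inequality, Cauchy--Schwarz in $\tau$, with $\beta_{k,j}\geq 1$). The gap is in the $Y_k$ piece. Writing $\ft^{-1}(g_k)$ as a convolution in $t$ with the kernel $\ft_\tau^{-1}[(\cdot+i)^{-1}\eta_{\leq k}]$ does give an $O(1)$ kernel in $L^1_t$, but the object you obtain is, schematically, $2^{k/2}\int \check m(s)\,\U(s)\bigl[P_k H(\cdot,t-s)\bigr](x)\,ds$ with $H=\ft_\tau^{-1}h$: the free-evolution parameter is the convolution variable $s$, not the physical time $t$. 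For fixed $s$, $\U(s)$ is merely unitary on $L^2_x$ and is not bounded on the mixed norms $Y$ that matter here ($L^\infty_xL^2_t$, $L^2_xL^\infty_t$, $L^4_xL^\infty_t$), while the hypothesis of the lemma concerns the space-time norm of $t\mapsto\U(t)\phi$; so the hypothesis cannot be invoked on these fixed-$s$ pieces and your reduction does not close. The $2^{k/2}$ bookkeeping is also not right: Cauchy--Schwarz on unit $x$-intervals gives $\|h\|_{L^1}\les\|h\|_{L^2}$ locally, the wrong direction, and splitting $h$ into unit $x$-pieces and summing $\ell^1$ merely reproduces $\|h\|_{L^1_xL^2_\tau}$ without generating any $L^2_x$ control. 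Note also that the naive alternative of foliating $g_k$ over modulation as in the $X_k$ case loses $\int_{|\alpha|\les 2^k}|\alpha+i|^{-1}d\alpha\sim k$, exactly the logarithmic divergence the $Y_k$ space exists to remove, so a genuinely different idea is needed at this point.

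The missing idea is the one the paper uses. First exploit the $L^1_x$ structure in \eqref{eq:gkform} and translation invariance of $Y$ in $x$ to reduce to $h=h(\tau)$, i.e. to a single spatial point. Then observe that on the support of $g_k$ the modulation is $\les 2^k$ while $|\dr'|\sim 2^k$ (Lemma \ref{elemes}), so $|\xi-\varphi(\tau)|\leq C$ with $\varphi=\dr^{-1}$: the function lives in an $O(1)$ neighborhood in $\xi$ of the characteristic curve. One then replaces the weight $(\tau-\dr(\xi)+i)^{-1}\eta_{\leq k}(\tau-\dr(\xi))$ by the explicit algebraic expression \eqref{eq:l34fprime}; the error is bounded in $X_k$ by $C\|h_+\|_{L^2}$ and is handled by the already-proved $X_k$ case, while the main term factors, after the substitution $\xi=\varphi(\tau)-\mu$, into a bounded function of $x$ (the $\mu$-integral) times a genuine free evolution $\U(t)$ applied to data of $L^2$ norm $\les\|h_+\|_{L^2}$, the prefactor $2^{k/2}$ being absorbed by the Jacobian $\dr'\sim 2^k$ in the change of variable $\xi=\varphi(\tau)$. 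It is to this factored main term that the hypothesis on $Y$ is applied; without some version of this characteristic-curve factorization, your argument for the $Y_k$ component fails.
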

\begin{proof}
We assume first that $f_k=f_{k,j}$ with
$\norm{f_k}_{X_k}=2^{j/2}\beta_{k,j}\norm{f_{k,j}}_{L^2}$ and
$f_{k,j}$ is supported in $D_{k,j}$ for some $j\geq 0$. Then we have
\begin{eqnarray*}
\ft^{-1}(f_{k})(x,t)&=&\int f_{k,j}(\xi,\tau) e^{ix\xi}e^{it\tau}d\xi d\tau\\
&=&\int_{{I}_j} e^{it\tau} \int f_{k,j}(\xi,\tau+\dr(\xi))
e^{ix\xi}e^{it\dr(\xi)}d\xi d\tau.
\end{eqnarray*}
From the hypothesis on $Y$, we obtain
\begin{eqnarray*}
\norm{\ft^{-1}(f_{k})(x,t)}_Y &\les&  \int \eta_j(\tau)
\normo{e^{it\tau} \int f_{k,j}(\xi,\tau+\dr(\xi))
e^{ix\xi}e^{it\dr(\xi)}d\xi}_{Y}d\tau\\
&\les& 2^{ks}2^{j/2}\norm{f_{k,j}}_{L^2},
\end{eqnarray*}
which completes the proof in this case.

We assume now that $k\geq M$ and $f_k=g_k\in Y_k$. From definitions
and \eqref{eq:gkform}, it suffices to prove that if
\begin{eqnarray}\label{eq:l34f}
f(\xi,\tau)=2^{k/2}\chi_{[k-1,k+1]}(\xi)(\tau-\dr(\xi)+i)^{-1}\eta_{\leq
k}(\tau-\dr(\xi))\cdot h(\tau)
\end{eqnarray}
then
\begin{eqnarray}\label{eq:l34Yk}
\normo{\int_{\R^2}f(\xi,\tau)e^{ix\xi}e^{it\tau}d\xi d\tau}_{Y}\les
2^{ks}\norm{h}_{L^2}.
\end{eqnarray}
Since $k\geq 100$ and $|\xi|\in [2^{k-2},2^{k+2}]$, we may assume
that the function $h$ in \eqref{eq:l34f} is supported in the set
$\{t: |\tau|\in [2^{2k-C},2^{2k+C}]\}$. Let $h_+=h\cdot
1_{[0,\infty)}$, $h_-=h\cdot 1_{(-\infty,0]}$, and define the
corresponding functions $f_+$ and $f_-$ as in \eqref{eq:l34f}. By
symmetry, it suffices to prove the bounds \eqref{eq:l34Yk} for the
function $f_+$, which is supported in the set $\{(\xi,\tau):\xi \in
[2^{k-2},2^{k+2}],\ \tau\in [2^{2k-C},2^{2k+C}]\}$. From the fact
that $\dr(\xi)$ is strictly increasing, we have an inverse function
$\varphi(\alpha)=\dr^{-1}(\alpha)$. It follows from Lemma
\ref{elemes} that $\varphi(\tau)\sim 2^k$ for $\tau\in
[2^{2k-C},2^{2k+C}]$. Thus $f_+(\xi,\tau)\equiv 0$ unless
$|\varphi(\tau)-\xi|\leq C$. Let
\begin{eqnarray}\label{eq:l34fprime}
f_+'(\xi,\tau)&=&2^{k/2}\chi_{[k-1,k+1]}(\varphi(\tau))(\varphi(\tau)^2-\xi^2+(\varphi(\tau)-\xi)^2+i\varphi(\tau)2^{-k})^{-1}\nonumber\\
& &\cdot \eta_0(\varphi(\tau)-\xi)\cdot h_+(\tau)
\end{eqnarray}
Since for $\varphi(\tau)\sim 2^k$, $\tau\in [2^{2k-C},2^{2k+C}]$ and
$|\varphi(\tau)-\xi|\leq C$ we have
\begin{eqnarray*}
|\tau-\dr(\xi)-\varphi(\tau)^2+\xi^2|&=&|\dr(\varphi(\tau))-\dr(\xi)-\varphi(\tau)^2+\xi^2|\leq
C,
\end{eqnarray*}
it is easy to see that
\[\norm{f_+-f_+'}_{X_k}\leq C\norm{h_+}_{L^2}.\]
Thus using the estimate for $f\in X_k$, we get
$\norm{\ft^{-1}(f_+-f_+')}_{Y}\leq C2^{ks}\norm{h_+}_{L^2}$. It
remains to estimate $\norm{\ft^{-1}(f_+')}_{Y}$. We make the change
of variables $\xi=\varphi(\tau)-\mu$, then
\begin{eqnarray}
\ft^{-1}(f_+')(x,t)&=&2^{k/2}\int_{\R} h_+(\tau)
(\varphi(\tau))^{-1}\chi_{[k-1,k+1]}(\varphi(\tau))e^{it\tau}e^{ix\varphi(\tau)}d\tau \nonumber\\
&&\cdot \int_\R \eta_0(\mu)(\mu+i/2^{k+1})^{-1}e^{ix\mu}d\mu.
\end{eqnarray}
The second integral is bounded by $C$. We make the change of
variable $\xi=\varphi(\tau)$ in the first integral, then by the
hypothesis of $Y$ we get
\[\norm{\ft^{-1}(f_+')}_{Y}\les 2^{ks} 2^{k/2}\norm{h_+(\dr(\xi))\chi_k(\xi)}_{L^2}\les 2^{ks}\norm{h_+}_{L^2}.\]
Therefore, we complete the proof of the lemma.
\end{proof}

In order to obtain the more specific embedding properties of the
spaces $Z_k$, we need the estimates for the free finite-depth-fluid
equation. We prove the Strichartz estimates, smoothing effects, and
maximal function estimates for the free solutions in the following
lemma.

\begin{lemma}\label{lineares}
Assume $\delta\geq c_0$. Let $k\in \Z_+$ and $I\subset \R$ be an
interval with $|I|\les 1$. Then for all $\phi \in L^2(\R)$ with
$\widehat{\phi}$ supported in $I_k$,

(a) Strichartz estimates: if $k\geq 1$ then
\begin{equation}\label{eq:l33a}
\norm{\U(t)\phi}_{L_t^qL_x^r}\les \norm{\phi}_{L^2(\R)},
\end{equation}
where $(q,r)$ is admissible, namely $2\leq q,r\leq \infty$ and
$2/q=1/2-1/r$.

(b) Smoothing effect: if $k\geq 1$ then
\begin{equation}
\norm{\U(t)\phi}_{L_x^\infty L_t^2}\les
2^{-k/2}\norm{\phi}_{L^2(\R)}.
\end{equation}

(c) Maximal function estimate: if $k\geq 0$ then
\begin{eqnarray}
\norm{\U(t)\phi}_{L_x^2 L_{t\in I}^\infty}&\les&
2^{k/2}\norm{\phi}_{L^2(\R)},\\
\norm{\U(t)\phi}_{L_x^4 L_{t}^\infty}&\les&
2^{k/4}\norm{\phi}_{L^2(\R)}.
\end{eqnarray}
\end{lemma}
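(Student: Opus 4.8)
The plan is to reduce all three estimates to the fact that, on a single dyadic piece $I_k$ with $k\geq1$, the dispersion relation $\dr$ is --- uniformly in $\delta\geq c_0$ --- only a mild perturbation of the modified Benjamin--Ono symbol $\xi\abs{\xi}$; indeed $\dr(\xi)=\xi\abs{\xi}-\xi/(2\pi\delta)+O(\xi^2e^{-c\delta\abs{\xi}})$ as soon as $\abs{\xi}\ges1/\delta$, so $\U(t)$ localized at frequency $2^k$ behaves like the free Schrödinger group on each half-line of frequencies. Concretely, the inputs I would use are precisely those of Lemma \ref{elemes}: on $I_k$ with $k\geq1$ one has $\abs{\dr'(\xi)}\sim2^k$ and $\abs{\dr''(\xi)}\sim1$ with $c_0$-dependent constants, $\dr''$ is of one sign on each of the two components $I_k\cap\{\pm\xi>0\}$, and the higher $\xi$-derivatives of $\dr$ are bounded there; for the block $k=0$ one instead uses $\abs{\dr''(\xi)}+\abs{\dr'''(\xi)}\ges1$ on $I_0$ (near the origin $\dr''(\xi)\sim\delta\abs{\xi}$ while $\dr'''(\xi)\sim\delta$), which encodes the Airy-type degeneracy at $\xi=0$ and is needed only in part (c).

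Parts (a) and (b) are then \emph{routine}. For (a) I would write $\U(t)\phi=\wt{K}_k(\cdot,t)\ast\phi$ with $\wt{K}_k(x,t)=\int_\R e^{i(x\xi+t\dr(\xi))}\wt{\eta}_k(\xi)\,d\xi$ for a fattened cutoff $\wt{\eta}_k$ equal to $1$ on $I_k$; splitting into positive and negative frequencies and applying van der Corput's second-derivative lemma, with $\abs{\partial_\xi^2(x\xi+t\dr(\xi))}=\abs{t\dr''(\xi)}\ges\abs{t}$ on the support, gives $\norm{\wt{K}_k(\cdot,t)}_{L_x^\infty}\les\abs{t}^{-1/2}$ uniformly in $x$ and in $\delta\geq c_0$, and the energy identity $\norm{\U(t)\phi}_{L^2}=\norm{\phi}_{L^2}$ together with the Keel--Tao Strichartz estimate (dispersive decay exponent $1/2<1$, so no endpoint is excluded and the one-dimensional range $2\le q,r\le\infty$, $2/q=1/2-1/r$ is entirely covered, including $(4,\infty)$) yields \eqref{eq:l33a}. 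For (b) I would use Plancherel in $t$: $\dr$ restricts to a smooth diffeomorphism of each component of $I_k$ onto its image, so the substitution $\mu=\dr(\xi)$ exhibits $t\mapsto\U(t)\phi(x)$ as an inverse Fourier transform and gives, for every $x$,
\[\norm{\U(t)\phi(x)}_{L_t^2}^2\sim\int_\R\frac{\abs{\widehat{\phi}(\xi)}^2}{\abs{\dr'(\xi)}}\,d\xi\les2^{-k}\norm{\phi}_{L^2}^2,\]
and the supremum over $x$ is the smoothing estimate; the gain $2^{-k/2}=\abs{\dr'}^{-1/2}$ is uniform in $\delta$ precisely because $\abs{\dr'}\sim2^k$ on $I_k$ is.

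Part (c) carries the real content. I would prove both the local bound $\norm{\U(t)\phi}_{L_x^2L_{t\in I}^\infty}\les2^{k/2}\norm{\phi}_{L^2}$ and the global bound $\norm{\U(t)\phi}_{L_x^4L_t^\infty}\les2^{k/4}\norm{\phi}_{L^2}$ by the $TT^\ast$ method: linearise the $L_t^\infty$ norm through a measurable time selection $t=t(x)$, and reduce to $x$-decay estimates for oscillatory kernels of the form $\int_\R e^{i(x\xi+t\dr(\xi))}\abs{\xi}^a\eta_k(\xi)\,d\xi$. The kernel bounds needed are exactly those underlying Carleson's local maximal theorem (for the first estimate) and the Kenig--Ruiz global maximal estimate (for the second), and they transfer from $\xi^2$ to $\dr$ because on $I_k$ ($k\geq1$) the phase $x\xi+t\dr(\xi)$ has non-vanishing, bounded second $\xi$-derivative $t\dr''(\xi)$, uniformly in $\delta\geq c_0$; the analogous reduction was carried out for the symbol $\xi\abs{\xi}$ in \cite{IK}. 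For the block $k=0$ one combines the Airy-type kernel decay near $\xi=0$ (coming from $\dr'''\sim\delta$) with the Schrödinger-type decay on $\abs{\xi}\ges1/\delta$.

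The step I expect to be the \emph{genuine obstacle} is exactly this part (c): maximal-function bounds are not stable under perturbations of the symbol, so one cannot simply quote the mBO estimates plus a perturbation, and one must rerun the oscillatory-integral analysis for $\dr$ itself while keeping every constant independent of $\delta$ as $\delta\to\infty$ --- which is precisely the uniformity required by Theorem \ref{t11}. The one $\delta$-independent fact that makes this possible is the structure of $\dr$ recorded in Lemma \ref{elemes}; with that in hand, parts (a) and (b), as well as the reduction in part (c), are all straightforward.
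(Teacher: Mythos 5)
Your proposal is correct and follows essentially the same route as the paper: the paper's proof consists precisely of verifying the symbol bounds of Lemma \ref{elemes} and then invoking the standard machinery you sketch --- Strichartz via the $|t|^{-1/2}$ dispersive bound (citing \cite{GPW}), Kato smoothing and the global $L_x^4L_t^\infty$ maximal estimate via the oscillatory-integral results of \cite{KPV2} (also \cite{HWang}), and the local $L_x^2L_{t\in I}^\infty$ maximal estimate by a slight modification of Theorem 3.1 in \cite{KPV3}. Your write-up simply fleshes out, with the correct uniform-in-$\delta$ mechanism ($|\dr''|\sim 1$, $|\dr'|\sim 2^k$ on $I_k$, and the cubic degeneracy at $k=0$), the arguments that the paper delegates to those references.
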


\begin{proof}
For part (a), we use the results in \cite{GPW} and Lemma
\ref{elemes}. We easily see that the Strichartz estimates
\eqref{eq:l33a} holds if $(q,r)$ is admissible pairs as for
Schr\"odinger equation.

Part (b) and the second inequality in part (c) follow from Lemma
\ref{elemes} and the results in \cite{KPV2}. They were also proved
in \cite{HWang}. The first inequality in part (c) follows from
slightly modified argument in the proof of theorem 3.1 in
\cite{KPV3}. We omit the details.
\end{proof}

From Lemma \ref{embedgen} and Lemma \ref{lineares}, we immediately
get the following

\begin{lemma}\label{embedding}
Assume $\delta\geq c_0$. Let $k \in \Z_+$ and $I\subset \R$ be an
interval with $|I|\les 1$. Assume $(q,r)$ is admissible and $f_k \in
Z_k$. Then

(a) If $k\geq 1$, then
\[\norm{\ft^{-1}(f_k)}_{L_x^\infty L_t^2}\les
2^{-k/2}\norm{f_k}_{Z_k},\ \norm{\ft^{-1}(f_k)}_{L_t^q L_x^r}\les
\norm{f_k}_{Z_k}.\]

(b) For all $k\in \Z_+$, \[\norm{\ft^{-1}(f_k)}_{L_x^4
L_t^\infty}\les 2^{k/4}\norm{f_k}_{Z_k},\
\norm{\ft^{-1}(f_k)}_{L_x^2L_{t\in I}^\infty}\les
2^{k/2}\norm{f_k}_{Z_k}.\] As a consequence, for any $s\geq 0$ we
have $F^{s}\subseteq C(\R; {H}^s)$ with $\norm{u}_{L_t^\infty
H^s}\les \norm{u}_{F^s}$.
\end{lemma}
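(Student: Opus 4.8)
The plan is to derive Lemma \ref{embedding} from the two abstract tools already in place: Lemma \ref{embedgen}, which transfers a fixed-time estimate for the free evolution $\U(t)$ to an estimate for functions in $Z_k$, and Lemma \ref{lineares}, which supplies the relevant fixed-time estimates for $\U(t)\phi$ when $\widehat\phi$ is supported in $I_k$. Each inequality claimed in parts (a) and (b) is obtained by feeding the corresponding linear estimate from Lemma \ref{lineares} into Lemma \ref{embedgen} with the appropriate choice of the mixed-norm space $Y$ and of the exponent $s$.

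Concretely, first I would take $Y=L_x^\infty L_t^2$ and use the smoothing effect estimate Lemma \ref{lineares}(b), which for $k\geq 1$ gives $\norm{\U(t)\phi}_{L_x^\infty L_t^2}\les 2^{-k/2}\norm{\phi}_{L^2}$; Lemma \ref{embedgen} with $s=-1/2$ then yields $\norm{\ft^{-1}(f_k)}_{L_x^\infty L_t^2}\les 2^{-k/2}\norm{f_k}_{Z_k}$. Next, for the Strichartz bound I would take $Y=L_t^qL_x^r$ with $(q,r)$ admissible and invoke Lemma \ref{lineares}(a) with $s=0$, so Lemma \ref{embedgen} gives $\norm{\ft^{-1}(f_k)}_{L_t^qL_x^r}\les\norm{f_k}_{Z_k}$ for $k\geq 1$. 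This proves part (a). For part (b), I would take $Y=L_x^4L_t^\infty$ together with the second maximal-function estimate of Lemma \ref{lineares}(c) and $s=1/4$ to get $\norm{\ft^{-1}(f_k)}_{L_x^4L_t^\infty}\les 2^{k/4}\norm{f_k}_{Z_k}$, and similarly $Y=L_x^2L_{t\in I}^\infty$ with the first maximal-function estimate and $s=1/2$ to get $\norm{\ft^{-1}(f_k)}_{L_x^2L_{t\in I}^\infty}\les 2^{k/2}\norm{f_k}_{Z_k}$; here one should note the cases $k=0$ separately, where the frequency support $I_0=[-2,2]$ is compact and the bounds are immediate (indeed all the norms are comparable to $\norm{f_k}_{Z_k}=\norm{f_k}_{X_0}$ after a trivial Bernstein/Sobolev argument, since $Z_0=X_0$).

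Finally, for the consequence $F^s\subseteq C(\R;H^s)$ with $\norm{u}_{L_t^\infty H^s}\les\norm{u}_{F^s}$, I would first observe that continuity in time for the Schwartz-dense subclass follows from the $X^{s,b}$-type structure, so it suffices to prove the stated bound. Decompose $u=\sum_k P_k u$; for each $k$ one has, by the endpoint $q=\infty$, $r=2$ case of the Strichartz inequality in part (a) (or directly by the elementary bound $\norm{\ft^{-1}(f_k)}_{L_t^\infty L_x^2}\les \norm{f_k}_{X_k}$, which holds because $\sum_j 2^{j/2}\norm{f_{k,j}}_{L^2}$ dominates $\norm{\ft^{-1}f_k}_{L_t^\infty L_x^2}$ and the $Y_k$ piece is handled as in the proof of Lemma \ref{embedgen}), that $\norm{P_k u}_{L_t^\infty L_x^2}\les\norm{\eta_k(\xi)\ft(u)}_{Z_k}$. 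Summing the squares after multiplying by $2^{2sk}$, using almost-orthogonality of the frequency pieces $P_k u$ in $L_x^2$, and applying the Minkowski-type inequality $\norm{(\sum_k |P_ku|^2)^{1/2}}_{L_t^\infty L_x^2}\les (\sum_k \norm{P_ku}_{L_t^\infty L_x^2}^2)^{1/2}$ gives $\norm{u}_{L_t^\infty H^s}\les\norm{u}_{F^s}$.

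I expect the only genuinely delicate point to be the treatment of the $Y_k$ summand of $f_k$ in each application: Lemma \ref{embedgen} is already stated to cover this, so strictly speaking there is no obstacle, but one must make sure the hypotheses of Lemma \ref{embedgen} (namely that $\norm{\U(t)f}_Y\les 2^{ks}\norm{f}_{L^2}$ on frequency-$I_k$ functions) are met for exactly the $Y$ and $s$ we use — in particular that the maximal function estimates hold with a time interval $I$ of length $\les 1$ rather than all of $\R$, which is why the statement of part (b) carries the restriction $t\in I$. Beyond that, assembling the pieces is routine, so the proof is short.
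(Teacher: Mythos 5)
Your proposal is correct and is essentially the paper's own argument: the paper deduces this lemma immediately from Lemma \ref{embedgen} combined with the linear estimates of Lemma \ref{lineares}, exactly as you do by choosing $Y$ and $s$ to match each estimate. Your additional remarks (the trivial $k=0$ case, the $(q,r)=(\infty,2)$ endpoint plus square-summation for the $L_t^\infty H^s$ consequence, and the $t\in I$ restriction for the maximal function bound) are just fuller write-ups of details the paper leaves implicit.
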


\section{A Symmetric Estimate}
Following the standard fixed point argument, we will need to prove a
trilinear estimate. We start with a symmetric estimate for
nonnegative functions. For $\xi_1,\xi_2,\xi_3\in \R$ and
$\Omega:\R^3 \rightarrow \R$ as in \eqref{eq:reso}, and for
compactly supported nonnegative functions $f,g,h,u \in L^2(\R\times
\R)$ let
\begin{eqnarray*}
J(f,g,h,u)&=&\int_{\R^6}f(\xi_1,\mu_1)g(\xi_2,\mu_2)h(\xi_3,\mu_3)\\
&&u(\xi_1+\xi_2+\xi_3,\mu_1+\mu_2+\mu_3+\Omega(\xi_1,\xi_2,\xi_3))d\xi_1d\xi_2d\xi_3d\mu_1d\mu_2d\mu_3.
\end{eqnarray*}

\begin{lemma}\label{l41}
Assume $\delta \geq c_0$. Assume $k_i, j_i \in \Z_+$ and
$N_i=2^{k_i}, L_i=2^{j_i}$  and $f_{k_i,j_i}\in L^2(\R\times \R)$
are nonnegative functions supported in $I_{k_i}\times {I}_{j_i}, \
i=1,2,3,4$. For simplicity we write
$J=|J(f_{k_1,j_1},f_{k_2,j_2},f_{k_3,j_3},f_{k_4,j_4})|$.

(a) For any $k_i, j_i\in \Z_+, i=1,2,3,4$,
\begin{equation}
J\les \ 2^{(j_{min}+j_{thd})/2}2^{(k_{min}+k_{thd})/2}
\prod_{i=1}^4\norm{f_{k_i,j_i}}_{L^2}.
\end{equation}

(b) If $N_{thd}\ll N_{sub}$, $N_{max}\gg 1$, and $(k_i, j_i)\neq
(k_{thd},j_{max})$ for $i=1,2,3,4$,
\begin{equation}\label{eq:l41rb}
J\les \
2^{(j_1+j_2+j_3+j_4)/2}2^{-j_{max}/2}2^{-k_{max}/2}2^{k_{min}/2}
\prod_{i=1}^4\norm{f_{k_i,j_i}}_{L^2};
\end{equation}
if $N_{thd}\ll N_{sub}$, $N_{max}\gg 1$, and $(k_i,
j_i)=(k_{thd},j_{max})$ for some $i\in \{1,2,3,4\}$,
\begin{equation}
J\les \
2^{(j_1+j_2+j_3+j_4)/2}2^{-j_{max}/2}2^{-k_{max}/2}2^{k_{thd}/2}
\prod_{i=1}^4\norm{f_{k_i,j_i}}_{L^2}.
\end{equation}

(c) For any $k_i, j_i\in \Z_+, i=1,2,3,4,$ with $N_{max}\gg 1$,
\begin{equation}
J\les \ 2^{(j_1+j_2+j_3+j_4)/2}2^{-j_{max}/2}
\prod_{i=1}^4\norm{f_{k_i,j_i}}_{L^2}.
\end{equation}

(d) If $N_{min}\ll N_{max}$ and $N_{max}\gg 1$, then
\begin{equation}
J\les \ 2^{(j_1+j_2+j_3+j_4)/2}2^{-k_{max}}
\prod_{i=1}^4\norm{f_{k_i,j_i}}_{L^2}.
\end{equation}

\end{lemma}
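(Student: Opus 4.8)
The plan is to reduce $J$ to an integral over $\R^2$ of a product of four space--time functions localized in the regions $D_{k_i,j_i}$, to derive part (a) from Young's inequality without using the dispersion at all, and to derive parts (b)--(d) from bilinear $L^2_{x,t}$ estimates built on Lemma \ref{elemes} (and, for one sub-case of (d), on Lemma \ref{esresonance}). The first step is the harmless substitution $\mu_i\mapsto\tau_i-\dr(\xi_i)$ $(i=1,2,3)$, which absorbs $\Omega$: letting $G_i$ be the function with $\ft(G_i)(\xi,\tau)=f_{k_i,j_i}(\xi,\tau-\dr(\xi))$, so that $\ft(G_i)$ is supported in $D_{k_i,j_i}$ and $\norm{G_i}_{L^2_{x,t}}=\norm{f_{k_i,j_i}}_{L^2}$, one computes directly that
\begin{equation*}
J=\int_{\R^2}G_1(x,t)\,G_2(x,t)\,G_3(x,t)\,\overline{G_4(x,t)}\,dx\,dt ,
\end{equation*}
so $\abs{J}\leq\norm{G_aG_b}_{L^2_{x,t}}\norm{G_cG_d}_{L^2_{x,t}}$ for any partition $\{a,b\},\{c,d\}$ of $\{1,2,3,4\}$, and any H\"older arrangement is at hand too. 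For part (a) I would work with $J$ in its original form: with $\xi_1,\xi_2,\xi_3$ frozen the $\mu$-integral is a convolution in the modulation variables against $f_{k_4,j_4}(\xi_1+\xi_2+\xi_3,\,\cdot\,)$, so by Young's inequality (put $L^1_\mu$ on the two factors with the smallest $j$-index, $L^2_\mu$ on the other two) it is $\les 2^{(j_{min}+j_{thd})/2}\prod_i\norm{f_{k_i,j_i}(\xi_i,\,\cdot\,)}_{L^2_\mu}$; the remaining $\xi$-integral of these $L^2_\mu$-profiles is a convolution over $\xi_1+\xi_2+\xi_3=\xi_4$, treated in exactly the same way to supply the factor $2^{(k_{min}+k_{thd})/2}$.

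For part (c), the hypothesis $N_{max}\gg1$ places the two largest frequencies in the Schr\"odinger-type regime $\abs{\dr''}\sim1$ of Lemma \ref{elemes}, so the $L^6_{x,t}$ Strichartz estimate of Lemma \ref{lineares}(a) transfers, by the usual foliation in the modulation variable, to $\norm{G_i}_{L^6_{x,t}}\les 2^{j_i/2}\norm{f_{k_i,j_i}}_{L^2}$ for $k_i\geq1$; for $k_i=0$ the same bound is immediate since $\ft(G_i)$ is compactly supported in the two-dimensional Fourier space. H\"older with exponents $(6,6,6,2)$ applied to the product formula for $J$, with the $L^2$ slot assigned to the index that realizes $j_{max}$, gives part (c).

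Parts (b) and (d) rest on one bilinear estimate: if $N_a\gg N_b$ and $N_a\ges1$, then Cauchy--Schwarz in the convolution defining $\ft(G_a)\ast\ft(\overline{G_b})$ shows that the set of $(\xi_1,\tau_1)$ contributing to a fixed output has measure $\les\min(L_a,L_b)\cdot\min(N_b,\,\max(L_a,L_b)N_a^{-1})$ --- the second alternative because $\xi_1\mapsto\dr(\xi_1)-\dr(\xi_1\mp\xi)$ has derivative of size $\sim N_a$ by Lemma \ref{elemes} --- so that
\begin{equation*}
\norm{G_aG_b}_{L^2_{x,t}}\les\min(L_a,L_b)^{1/2}\,\min(N_b^{1/2},\,\max(L_a,L_b)^{1/2}N_a^{-1/2})\,\norm{f_{k_a,j_a}}_{L^2}\norm{f_{k_b,j_b}}_{L^2}.
\end{equation*}
For part (b), where $N_{thd}\ll N_{sub}\sim N_{max}$, each of the two small frequencies is paired with a large one and the estimate is used on both pairs: on the pair not containing the index that realizes $j_{max}$ one takes the $N_a^{-1/2}$ alternative (keeping both its modulations), and on the pair that does contain it one takes the $N_b^{1/2}$ alternative (keeping only its other, not its maximal, modulation). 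If the $j_{max}$-index carries one of $N_{max},N_{sub},N_{min}$, one arranges that second pair to contain the $N_{min}$-index, producing $2^{k_{min}/2}$; if instead $(k_i,j_i)=(k_{thd},j_{max})$ this is impossible and that pair must use $N_b=N_{thd}$, producing $2^{k_{thd}/2}$ --- exactly the dichotomy in the statement. In both cases the three surviving modulation powers multiply to $2^{(j_1+j_2+j_3+j_4)/2-j_{max}/2}$ and the single $N_a^{-1/2}$ gives $2^{-k_{max}/2}$. For part (d) (only $N_{min}\ll N_{max}$ assumed): if the four frequencies split into two large and two small, the same pairing with the $N_a^{-1/2}$ alternative on both pairs gives $2^{(j_1+j_2+j_3+j_4)/2}2^{-k_{max}}$; if instead the three largest frequencies are comparable, then, the smallest being $\ll$ them, Lemma \ref{esresonance} (supplemented by a short direct estimate from Lemma \ref{elemes} in the case where the small frequency equals $\xi_1+\xi_2+\xi_3$) gives $\abs{\Omega}\sim N_{max}^2$, which forces $2^{j_{max}}\ges N_{max}^2$, so part (c) already yields the claim.

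The step I expect to be the main obstacle is part (b): the target bound requires extracting simultaneously the modulation gain $2^{-j_{max}/2}$ and the frequency gain $2^{-k_{max}/2}2^{k_{min}/2}$, and no single estimate delivers both, so the two alternatives in the measure bound have to be chosen pair-by-pair while carefully tracking which index is pinned into the least favorable slot --- which is precisely what the hypothesis $(k_i,j_i)\neq(k_{thd},j_{max})$ and the two-case conclusion encode. A secondary but genuine constraint is that $J$ has no time cut-off, so only globally valid free-solution estimates --- the $L^6_{x,t}$ Strichartz estimate and the separated bilinear estimate above, but not the local $L^2_xL^\infty_t$ maximal function bound of Lemma \ref{lineares}(c) --- are admissible here; these do suffice for all four parts.
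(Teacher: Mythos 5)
Your argument is correct, but it organizes parts (b)--(d) quite differently from the paper. The paper proves (b) by Cauchy--Schwarz in the modulation variables (integrating out the $j_{max}$-factor) followed by a change of variables in $\xi$ whose Jacobian is $|\dr'(\xi_i)-\dr'(\xi_j)|\sim 2^{k_{max}}$, proves (c) with the H\"older pattern $L^4_xL^\infty_t\times L^4_xL^\infty_t\times L^\infty_xL^2_t\times L^2$ (maximal function plus smoothing, both global in time), and proves (d) in the resonant case $\xi_1\xi_2<0$, $N_{thd}\ll N_{sub}$ by an extra dyadic decomposition in $|\xi_1+\xi_2|\sim 2^l$, invoking Lemma \ref{esresonance} to get $L_{max}\ges 2^{l+k_{max}}$ and summing over $l$. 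You instead write $J=\int G_1G_2G_3\overline{G_4}$, split by Cauchy--Schwarz into two bilinear $L^2_{x,t}$ norms, and feed everything through one high--low bilinear estimate whose proof (measure of the convolution fibre, derivative $|\dr'(\xi_1)-\dr'(\xi-\xi_1)|\sim N_a$ from Lemma \ref{elemes}) is the same engine as the paper's change of variables; your pair-by-pair bookkeeping of which factor loses its modulation reproduces exactly the dichotomy $2^{k_{min}/2}$ versus $2^{k_{thd}/2}$ in (b), and in (d) the double application of the $N_a^{-1/2}$ branch gives $2^{-k_{max}}$ directly, with no resonance analysis or $l$-summation in the two-large/two-small case -- arguably a simplification. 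Your $L^6$ Strichartz route to (c) is an equally valid global-in-time substitute for the paper's maximal-plus-smoothing H\"older, and your remark that only globally valid linear estimates are admissible here is exactly why the paper avoids the local $L^2_xL^\infty_{t\in I}$ bound. Two small points you should make explicit: first, record (as the paper does) that $J\equiv 0$ unless $N_{max}\sim N_{sub}$, since your pairing in (b) tacitly uses that both large frequencies are comparable to $N_{max}$; second, in (d) when the three convolution inputs are all comparable and the small frequency is the output $\xi_1+\xi_2+\xi_3$, Lemma \ref{esresonance} does not apply as stated, and the promised supplementary estimate is genuinely needed -- it follows either from the antisymmetry $\Omega(\xi_1,\xi_2,\xi_3)=-\Omega(-\xi_2,-\xi_3,\xi_1+\xi_2+\xi_3)$, which reduces it to Lemma \ref{esresonance}, or from the positivity and size of $\dr''$ established in the proof of Lemma \ref{elemes}, which give $|\dr(a)+\dr(b)-\dr(a+b)|\sim ab$ for $a,b\sim N_{max}$ of the same sign.
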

\begin{proof}
Let $A_{k_i}(\xi)=[\int_\R |f_{k_i,j_i}(\xi,\mu)|^2d\mu]^{1/2}$,
$i=1,2,3,4$. Using the Cauchy-Schwarz inequality and the support
properties of the functions $f_{k_i,j_i}$,
\begin{eqnarray*}
&&|J(f_{k_1,j_1},f_{k_2,j_2},f_{k_3,j_3},f_{k_4,j_4})|\\
&\les&
2^{(j_{min}+j_{thd})/2}\int_{\R^3}A_{k_1}(\xi_1)A_{k_2}(\xi_1)A_{k_3}(\xi_1)A_{k_4}(\xi_1+\xi_2+\xi_3)d\xi_1d\xi_2d\xi_3\\
&\les&
2^{(k_{min}+k_{thd})/2}2^{(j_{min}+j_{thd})/2}\prod_{i=1}^4\norm{f_{k_i,j_i}}_{L^2},
\end{eqnarray*}
which is part (a), as desired.

For part (b), we observe that
$J(f_{k_1,j_1},f_{k_2,j_2},f_{k_3,j_3},f_{k_4,j_4})\equiv 0$ unless
\begin{equation}\label{eq:freeq}
N_{max}\sim N_{sub}.
\end{equation}
 Simple changes of variables in the integration and the fact
that the function $\omega$ is odd show that
\begin{eqnarray*}
|J(f,g,h,u)|=|J(g,f,h,u)|=|J(f,h,g,u)|=|J(\widetilde{f},g,u,h)|,
\end{eqnarray*}
where $\widetilde{f}(\xi,\mu)=f(-\xi,-\mu)$. Thus we may assume
$k_1\leq k_2\leq k_3\leq k_4$. We assume first that $j_2\neq
j_{max}$. Then we have several cases: if $j_4=j_{max}$, then we will
prove that if $g_i:\R\rightarrow \R_+$ are $L^2$ functions supported
in $I_{k_i}$, $i=1,2,3$, and $g: \R^2\rightarrow \R_+$ is an $L^2$
function supported in $I_{k_4}\times \widetilde{I}_{j_4}$, then
\begin{eqnarray}\label{eq:l41b}
&&\int_{\R^3}g_1(\xi_1)g_2(\xi_2)g_3(\xi_3)g(\xi_1+\xi_2+\xi_3,\Omega(\xi_1,\xi_2,\xi_3))d\xi_1d\xi_2d\xi_3\nonumber\\
&&\les \
2^{-k_{max}/2}2^{k_{min}/2}\norm{g_1}_{L^2}\norm{g_2}_{L^2}\norm{g_3}_{L^2}\norm{g}_{L^2}.
\end{eqnarray}
This suffices for \eqref{eq:l41rb}.

To prove \eqref{eq:l41b}, we first observe that since $N_{thd}\ll
N_{sub}$ then $|\xi_3+\xi_2|\sim |\xi_3|$. By change of variable
$\xi'_1=\xi_1$, $\xi'_2=\xi_2$, $\xi'_3=\xi_3+\xi_2$, we get that
the left side of \eqref{eq:l41b} is bounded by
\begin{eqnarray}\label{eq:l41b2}
&&\int_{|\xi_1|\sim 2^{k_1},|\xi_2|\sim 2^{k_2},|\xi_3|\sim
2^{k_3}}g_1(\xi_1)g_2(\xi_2)\nonumber\\
&&g_3(\xi_3-\xi_2)g(\xi_1+\xi_3,\Omega(\xi_1,\xi_2,\xi_3-\xi_2))d\xi_1d\xi_2d\xi_3.
\end{eqnarray}
Note that in the integration area we have
\begin{eqnarray*}
\big|\frac{\partial}{\partial_{\xi_2}}\left[\Omega(\xi_1,\xi_2,\xi_3-\xi_2)\right]\big|=|\dr'(\xi_2)-\dr'(\xi_3-\xi_2)|\sim
2^{k_3},
\end{eqnarray*}
where we use by Lemma \ref{elemes} that $\dr'(\xi)\sim |\xi|$ and
$N_2\ll N_3$. By change of variable
$\mu_2=\Omega(\xi_1,\xi_2,\xi_3-\xi_2)$, we get that
\eqref{eq:l41b2} is bounded by
\begin{eqnarray}
&&2^{-k_3/2}\int_{|\xi_1|\sim
2^{k_1}}g_1(\xi_1)\norm{g_2}_{L^2}\norm{g_3}_{L^2}\norm{g}_{L^2}d\xi_1 \nonumber\\
&&\les \
2^{-k_{max}/2}2^{k_{min}/2}\norm{g_1}_{L^2}\norm{g_2}_{L^2}\norm{g_3}_{L^2}\norm{g_1}_{L^2}\norm{g}_{L^2}.
\end{eqnarray}

If $j_3=j_{max}$, this case is identical to the case $j_4=j_{max}$
in view of \eqref{eq:freeq}. If $j_1=j_{max}$ it suffices to prove
that if $g_i:\R\rightarrow \R_+$ are $L^2$ functions supported in
$I_{k_i}$, $i=2,3,4$, and $g: \R^2\rightarrow \R_+$ is an $L^2$
function supported in $I_{k_1}\times \widetilde{I}_{j_1}$, then
\begin{eqnarray}\label{eq:l41b3}
&&\int_{\R^3}g_2(\xi_2)g_3(\xi_3)g_4(\xi_4)g(\xi_2+\xi_3+\xi_4,\Omega(\xi_2,\xi_3,\xi_4))d\xi_2d\xi_3d\xi_4\nonumber\\
&&\les \
2^{-k_{max}/2}2^{k_{min}/2}\norm{g_2}_{L^2}\norm{g_3}_{L^2}\norm{g_4}_{L^2}\norm{g}_{L^2}.
\end{eqnarray}

Indeed, by change of variables
$\xi'_2=\xi_2,\xi'_3=\xi_3,\xi'_4=\xi_2+\xi_3+\xi_4$ and noting that
in the area $|\xi'_2|\sim 2^{k_2},|\xi'_3|\sim 2^{k_3},|\xi'_4|\sim
2^{k_1}$,
\begin{eqnarray*}
\big|\frac{\partial}{\partial_{\xi'_2}}\left[\Omega(\xi'_2,\xi'_3,\xi'_4-\xi'_2-\xi'_3)\right]\big|=|\dr'(\xi'_2)-\dr'(\xi'_4-\xi'_2-\xi'_3)|\sim
2^{k_3},
\end{eqnarray*}
we get from Cauchy-Schwarz inequality that
\begin{eqnarray}
&&\int_{\R^3}g_2(\xi_2)g_3(\xi_3)g_4(\xi_4)g(\xi_2+\xi_3+\xi_4,\Omega(\xi_2,\xi_3,\xi_4))d\xi_2d\xi_3d\xi_4\nonumber\\
&\les& \int_{|\xi'_2|\sim 2^{k_2},|\xi'_3|\sim 2^{k_3}, |\xi'_4|\sim
2^{k_1}}g_2(\xi'_2)g_3(\xi'_3)\nonumber\\
&&\quad \cdot
g_4(\xi'_4-\xi'_2-\xi'_3)g(\xi'_4,\Omega(\xi'_2,\xi'_3,\xi'_4-\xi'_2-\xi'_3))d\xi'_2d\xi'_3d\xi'_4\nonumber\\
&\les&2^{-k_3/2}\int_{|\xi'_3|\sim 2^{k_3}, |\xi'_4|\sim
2^{k_1}}g_3(\xi'_3)\norm{g_2(\xi'_2)g_4(\xi'_4-\xi'_2-\xi'_3)}_{L_{\xi'_2}^2}\norm{g(\xi'_4,\cdot)}_{L_{\xi'_2}^2}d\xi'_3d\xi'_4\nonumber\\
&\les&2^{-k_{max}/2}2^{k_{min}/2}\norm{g_2}_{L^2}\norm{g_3}_{L^2}\norm{g_4}_{L^2}\norm{g}_{L^2}.
\end{eqnarray}

We assume now that $j_2=j_{max}$. The proof is identical to the case
$j_1=j_{max}$. We note that we actually prove that if $N_{thd}\ll
N_{sub}$ then
\begin{eqnarray}\label{eq:l41bb}
J\leq C
2^{(j_1+j_2+j_3+j_4)/2}2^{-j_{sub}/2}2^{-k_{max}/2}2^{k_{min}/2}\prod_{i=1}^4\norm{f_{k_i,j_i}}_{L^2}.
\end{eqnarray}
Therefore, we complete the proof for part (b).

For part (c), setting $f^{\sharp}_{k_i,j_i}=f_{k_i,j_i}(\xi,
\tau+\dr(\xi))$, $i=1,2,3$, then we get
\begin{eqnarray*}
&&|J(f_{k_1,j_1},f_{k_2,j_2},f_{k_3,j_3},f_{k_4,j_4})|\\
&=&|\int
f^\sharp_{k_1,j_1}*f^\sharp_{k_2,j_2}*f^\sharp_{k_3,j_3}\cdot
f^\sharp_{k_4,j_4}|\\
&\les&
\norm{f^\sharp_{k_1,j_1}*f^\sharp_{k_2,j_2}*f^\sharp_{k_3,j_3}}_{L^2}\norm{f^\sharp_{k_4,j_4}}_{L^2}\\
&\les&\norm{\ft^{-1}(f^\sharp_{k_1,j_1})}_{L_x^4L_t^\infty}\norm{\ft^{-1}(f^\sharp_{k_2,j_2})}_{L_x^4L_t^\infty}\norm{\ft^{-1}(f^\sharp_{k_3,j_3})}_{L_x^\infty
L_t^2}\norm{f_{k_4,j_4}|}_{L^2}.
\end{eqnarray*}
On the other hand,
\begin{eqnarray*}
\ft^{-1}(f^\sharp_{k_1,j_1})&=&\int_{\R^2}f_{k_i,j_i}(\xi,
\tau+\dr(\xi))e^{ix\xi}e^{it\tau}d\xi d\tau\\
&=&\int_{\R^2} f_{k_i,j_i}(\xi,
\tau)e^{ix\xi}e^{it\dr(\xi)}e^{it\tau}d\xi d\tau,
\end{eqnarray*}
then it follows from Lemma \ref{lineares} (c) that
\[\norm{\ft^{-1}(f^\sharp_{k_1,j_1})}_{L_x^4L_t^\infty}\les \
2^{j_1/2}2^{k_1/4}\norm{f_{k_1,j_1}}_{L^2}.\] Similarly we can bound
the other terms. Thus part (c) follows form the symmetry.

For part (d), we only need to consider the worst cases $\xi_1\cdot
\xi_2<0$ and $N_{thd}\ll N_{sub}$. Indeed in the other cases we get
from the fact $|\Omega(\xi_1,\xi_2,\xi_3)|\ges \ N_{thd}N_{sub}$
which implies that $L_{max}\ges \ N_{thd}N_{sub}$ by checking the
support properties. Thus (d) follows from (b) and (c) in these
cases. We assume now $\xi_1\cdot \xi_2<0$ and $N_{2}\ll N_{3}$. If
$j_4=j_{max}$, it suffices to prove that if $g_i$ is $L^2$
nonnegative functions supported in $I_{k_i}$, $i=1,2,3$, and $g$ is
a $L^2$ nonnegative function supported in $I_{k_4}\times {I}_{j_4}$,
then
\begin{eqnarray}\label{eq:l41d1}
&&\int_{\R^3\cap \{\xi_1\cdot
\xi_2<0\}}g_1(\xi_1)g_2(\xi_2)g_3(\xi_3)g(\xi_1+\xi_2+\xi_3,
\Omega(\xi_1,\xi_2,\xi_3))d\xi_1d\xi_2d\xi_3\nonumber\\
&\les&
2^{j_4/2}2^{-k_3}\norm{g_1}_{L^2}\norm{g_2}_{L^2}\norm{g_3}_{L^2}\norm{g}_{L^2}.
\end{eqnarray}
By localizing $|\xi_1+\xi_2|\sim 2^l$ for $l\in \Z$, we get that the
right-hand side of \eqref{eq:l41d1} is bounded by
\begin{eqnarray}\label{eq:l41d2}
\sum_{l}\int_{\R^3}\chi_{l}(\xi_1+\xi_2)g_1(\xi_1)g_2(\xi_2)g_3(\xi_3)g(\xi_1+\xi_2+\xi_3,
\Omega(\xi_1,\xi_2,\xi_3))d\xi_1d\xi_2d\xi_3.
\end{eqnarray}
From the support properties of the functions $g_i,\ g$ and Lemma
\ref{esresonance} that in the integration area
\[|\Omega(\xi_1,\xi_2,\xi_3)|\sim |(\xi_1+\xi_2)(\xi_1+\xi_3)|\sim 2^{l+k_3},\]
We get that
\begin{equation}\label{eq:l41dsi}
L_{max}\ges \ 2^{l+k_3}.
\end{equation}
By changing variable of integration $\xi_1'=\xi_1+\xi_2$,
$\xi_2'=\xi_2$, $\xi_3'=\xi_1+\xi_3$, we obtain that
\eqref{eq:l41d2} is bounded by
\begin{eqnarray}\label{eq:l41d3}
&&\sum_{l}\int_{|\xi_1'|\sim 2^l,|\xi_2'|\sim 2^{k_2},|\xi_3'|\sim 2^{k_3}}\chi_{l}(\xi_1')g_1(\xi_1'-\xi_2')g_2(\xi_2')g_3(\xi_2'+\xi_3'-\xi_1')\nonumber\\
&&\quad g(\xi_2'+\xi_3',
\Omega(\xi_1'-\xi_2',\xi_2',\xi_2'+\xi_3'-\xi_1'))d\xi_1'd\xi_2'd\xi_3'.
\end{eqnarray}
Since in the integration area
\begin{eqnarray}\label{eq:l41djo}
\big|\frac{\partial}{\partial_{\xi_1'}}[\Omega(\xi_1'-\xi_2',\xi_2',\xi_2'+\xi_3'-\xi_1')]\big|
=|\dr'(\xi_1'-\xi_2')-\dr'(\xi_2'+\xi_3'-\xi_1')|\sim 2^{k_3},
\end{eqnarray}
then we get from \eqref{eq:l41djo} that \eqref{eq:l41d3} is bounded
by
\begin{eqnarray}
&&\sum_{l}\int_{|\xi_1'|\sim 2^l}\chi_{l}(\xi_1')\norm{g_1}_{L^2}\norm{g_3}_{L^2}\nonumber\\
&&\quad \norm{g_2(\xi_2')g(\xi_2'+\xi_3',
\Omega(\xi_1'-\xi_2',\xi_2',\xi_2'+\xi_3'-\xi_1'))}_{L^2_{\xi_2',\xi_3'}}d\xi_1'\nonumber\\
&\les&\sum_{l}2^{l/2}2^{-k_3/2}\norm{g_1}_{L^2}\norm{g_2}_{L^2}\norm{g_3}_{L^2}\norm{g}_{L^2}\nonumber\\
&\les&
2^{j_{max}/2}2^{-k_3}\norm{g_1}_{L^2}\norm{g_2}_{L^2}\norm{g_3}_{L^2}\norm{g}_{L^2},
\end{eqnarray}
where we used \eqref{eq:l41dsi} in the last inequality.

From symmetry we know the case $j_3=j_{max}$ is identical to the
case $j_4=j_{max}$, and the case $j_1=j_{max}$ is identical to the
case $j_2=j_{max}$, thus it reduces to prove the case $j_2=j_{max}$.
It suffices to prove that if $g_i$ is $L^2$ nonnegative functions
supported in $I_{k_i}$, $i=1,3,4$, and $g$ is a $L^2$ nonnegative
function supported in $I_{k_2}\times {I}_{j_2}$, then
\begin{eqnarray}\label{eq:l41dc21}
&&\int_{\R^3\cap \{\xi_1\cdot
\xi_2<0\}}g_1(\xi_1)g_3(\xi_3)g_4(\xi_4)g(\xi_1+\xi_3+\xi_4,
\Omega(\xi_1,\xi_3,\xi_4))d\xi_1d\xi_3d\xi_4\nonumber\\
&\les&
2^{j_2/2}2^{-k_3}\norm{g_1}_{L^2}\norm{g_4}_{L^2}\norm{g_3}_{L^2}\norm{g}_{L^2}.
\end{eqnarray}
As the case $j_4=j_{max}$, we get that the right-hand side of
\eqref{eq:l41dc21} is bounded by
\begin{eqnarray}\label{eq:l41dc22}
\sum_{l}\int_{\R^3}\chi_{l}(\xi_3+\xi_4)g_1(\xi_1)g_4(\xi_4)g_3(\xi_3)g(\xi_1+\xi_4+\xi_3,
\Omega(\xi_1,\xi_3,\xi_4))d\xi_1d\xi_4d\xi_3.
\end{eqnarray}
From the support properties of the functions $g_i,\ g$ and Lemma
\ref{esresonance} that in the integration area
\[|\Omega(\xi_1,\xi_3,\xi_4)|\sim|(\xi_1+\xi_4)(\xi_4+\xi_3)|\sim 2^{l+k_3},\]
We get that
\begin{equation}\label{eq:l41dc2si}
L_{max}\ges \ 2^{l+k_3}.
\end{equation}
By changing variable of integration $\xi_1'=\xi_1+\xi_3$,
$\xi_3'=\xi_3+\xi_4$, $\xi_4'=\xi_1+\xi_3+\xi_4$, we obtain that
\eqref{eq:l41dc22} is bounded by
\begin{eqnarray}\label{eq:l41dc23}
&&\sum_{l}\int_{|\xi_3'|\sim 2^l,|\xi_4'|\sim 2^{k_2},|\xi_1'|\sim 2^{k_3}}\chi_{l}(\xi_3')g_1(\xi_4'-\xi_3')g_3(\xi_1'+\xi_3'-\xi_4')g_4(\xi_4'-\xi_1')\nonumber\\
&&\quad g(\xi_4',
\Omega(\xi_4'-\xi_3',\xi_1'+\xi_3'-\xi_4',\xi_4'-\xi_1'))d\xi_1'd\xi_3'd\xi_4'.
\end{eqnarray}
Since in the integration area,
\begin{eqnarray}\label{eq:l41dc2jo}
&&\big|\frac{\partial}{\partial_{\xi_3'}}[\Omega(\xi_4'-\xi_3',\xi_1'+\xi_3'-\xi_4',\xi_4'-\xi_1')]\big|\nonumber\\
&=&|-\dr'(\xi_4'-\xi_3')+\dr'(\xi_1'+\xi_3'-\xi_4')|\sim 2^{k_3},
\end{eqnarray}
then we get from \eqref{eq:l41dc2jo} that \eqref{eq:l41dc23} is
bounded by
\begin{eqnarray}
&&\sum_{l}\int_{|\xi_3'|\sim 2^l}\chi_{l}(\xi_3')\norm{g_1}_{L^2}\norm{g_3}_{L^2}\nonumber\\
&&\quad \norm{g_4(\xi_4'-\xi_1')g(\xi_4',
\Omega(\xi_4'-\xi_3',\xi_1'+\xi_3'-\xi_4',\xi_4'-\xi_1'))}_{L^2_{\xi_1',\xi_4'}}d\xi_3'\nonumber\\
&\les&\sum_{l}2^{l/2}2^{-k_3/2}\norm{g_1}_{L^2}\norm{g_3}_{L^2}\norm{g_4}_{L^2}\norm{g}_{L^2}\nonumber\\
&\les&
2^{j_{max}/2}2^{-k_3}\norm{g_1}_{L^2}\norm{g_2}_{L^2}\norm{g_3}_{L^2}\norm{g}_{L^2},
\end{eqnarray}
where we used \eqref{eq:l41dc2si} in the last inequality. Therefore,
we complete the proof for part (d).
\end{proof}

We restate now Lemma \ref{l41} in a form that is suitable for the
trilinear estimates in the next sections.
\begin{corollary}\label{cor42}
Assume $\delta\geq c_0$. Let $k_i, j_i \in \Z_+$ and $N_i=2^{k_i},
L_i=2^{j_i}$ for $ i=1,2,3,4$. Assume $f_{k_i,j_i}\in L^2(\R\times
\R)$ are functions supported in $D_{k_i,j_i}$, $i=1,2,3.$

(a) For any $k_i, j_i\in \Z_+, i=1,2,3,4$,
\begin{eqnarray}
&&\norm{1_{D_{k_4,j_4}}(\xi,\tau)(f_{k_1,j_1}*f_{k_2,j_2}*f_{k_3,j_3})}_{L^2}\nonumber\\
&\leq&
C2^{(k_{min}+k_{thd})/2}2^{(j_{min}+j_{thd})/2}\prod_{i=1}^3\norm{f_{k_i,j_i}}_{L^2}.
\end{eqnarray}

(b) For any $k_i, j_i\in \Z_+, i=1,2,3,4$ with $N_{thd}\ll N_{sub}$.
If for some $i\in \{1,2,3,4\}$ such that
$(k_i,j_i)=(k_{thd},j_{max})$, then
\begin{eqnarray}
&&\norm{1_{D_{k_4,j_4}}(\xi,\tau)(f_{k_1,j_1}*f_{k_2,j_2}*f_{k_3,j_3})}_{L^2}\nonumber\\
&\leq&
C2^{(-k_{max}+k_{thd})/2}2^{(j_{1}+j_{2}+j_3+j_4)/2}2^{-j_{max}/2}\prod_{i=1}^3\norm{f_{k_i,j_i}}_{L^2},
\end{eqnarray}
else we have
\begin{eqnarray}
&&\norm{1_{D_{k_4,j_4}}(\xi,\tau)(f_{k_1,j_1}*f_{k_2,j_2}*f_{k_3,j_3})}_{L^2}\nonumber\\
&\leq&
C2^{(-k_{max}+k_{min})/2}2^{(j_{1}+j_{2}+j_3+j_4)/2}2^{-j_{max}/2}\prod_{i=1}^3\norm{f_{k_i,j_i}}_{L^2}.
\end{eqnarray}

(c) For any $k_i, j_i\in \Z_+, i=1,2,3,4$, with $N_{max}\gg 1$,
\begin{eqnarray}
&&\norm{1_{D_{k_4,j_4}}(\xi,\tau)(f_{k_1,j_1}*f_{k_2,j_2}*f_{k_3,j_3})}_{L^2}\nonumber\\
&\leq&
C2^{(j_{1}+j_{2}+j_3+j_4)/2}2^{-j_{max}/2}\prod_{i=1}^3\norm{f_{k_i,j_i}}_{L^2}.
\end{eqnarray}

(d) If $N_{min}\ll N_{max}$ and $N_{max}\gg 1$, then
\begin{eqnarray}
&&\norm{1_{D_{k_4,j_4}}(\xi,\tau)(f_{k_1,j_1}*f_{k_2,j_2}*f_{k_3,j_3})}_{L^2}\nonumber\\
&\leq&
C2^{(j_{1}+j_{2}+j_3+j_4)/2}2^{-k_{max}/2}\prod_{i=1}^3\norm{f_{k_i,j_i}}_{L^2}.
\end{eqnarray}
\end{corollary}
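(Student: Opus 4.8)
The plan is to derive Corollary \ref{cor42} from Lemma \ref{l41} by a duality argument, the point being that a convolution restricted to $D_{k_4,j_4}$ and measured in $L^2$ is, after a measure–preserving shift that produces the resonance function $\Omega$, exactly the quadrilinear form $J$. Fix $f_{k_i,j_i}$ supported in $D_{k_i,j_i}$, $i=1,2,3$. By $L^2$ duality,
\[
\norm{1_{D_{k_4,j_4}}(\xi,\tau)\,(f_{k_1,j_1}*f_{k_2,j_2}*f_{k_3,j_3})}_{L^2}=\sup\left|\int_{\R^2}(f_{k_1,j_1}*f_{k_2,j_2}*f_{k_3,j_3})\,\overline{g}\,d\xi d\tau\right|,
\]
the supremum being taken over $g\in L^2(\R^2)$ with $\norm{g}_{L^2}\le 1$ and $\supp g\subset D_{k_4,j_4}$. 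Writing out the convolution, the integral on the right equals
\[
\int_{\R^6}f_{k_1,j_1}(\xi_1,\tau_1)f_{k_2,j_2}(\xi_2,\tau_2)f_{k_3,j_3}(\xi_3,\tau_3)\,\overline{g(\xi_1+\xi_2+\xi_3,\tau_1+\tau_2+\tau_3)}\,d\xi_1d\xi_2d\xi_3d\tau_1d\tau_2d\tau_3.
\]

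Next I would run, for $i=1,2,3$, the change of variables $\tau_i=\mu_i+\dr(\xi_i)$ and set $f^\sharp_{k_i,j_i}(\xi_i,\mu_i)=f_{k_i,j_i}(\xi_i,\mu_i+\dr(\xi_i))$ and $g^\sharp(\xi,\mu)=g(\xi,\mu+\dr(\xi))$, as in the proof of Lemma \ref{l41}(c). For each fixed frequency this is a translation in the dual time variable, hence $L^2$–norm preserving, so $\norm{f^\sharp_{k_i,j_i}}_{L^2}=\norm{f_{k_i,j_i}}_{L^2}$ and $\norm{g^\sharp}_{L^2}=\norm{g}_{L^2}\le 1$; moreover $f^\sharp_{k_i,j_i}$ is supported in $I_{k_i}\times I_{j_i}$ and $g^\sharp$ in $I_{k_4}\times I_{j_4}$. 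The algebraic identity that makes $\Omega$ appear is
\[
\tau_1+\tau_2+\tau_3=\mu_1+\mu_2+\mu_3+\dr(\xi_1)+\dr(\xi_2)+\dr(\xi_3)=\mu_1+\mu_2+\mu_3+\Omega(\xi_1,\xi_2,\xi_3)+\dr(\xi_1+\xi_2+\xi_3),
\]
so that after the substitution the displayed integral becomes $J(f^\sharp_{k_1,j_1},f^\sharp_{k_2,j_2},f^\sharp_{k_3,j_3},\overline{g^\sharp})$; passing to absolute values and replacing every factor by its modulus only enlarges it, and $|g^\sharp|$ is then an admissible nonnegative "fourth function" in the sense of Lemma \ref{l41}.

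It remains to quote Lemma \ref{l41}. Parts (a), (c) and (d), applied to $|f^\sharp_{k_1,j_1}|,|f^\sharp_{k_2,j_2}|,|f^\sharp_{k_3,j_3}|$ and $|g^\sharp|$, give parts (a), (c) and (d) of the corollary after taking the supremum over $g$ (the factor $\norm{f_{k_4,j_4}}_{L^2}$ there becomes $\norm{|g^\sharp|}_{L^2}\le 1$, which is why the product runs only over $i=1,2,3$). For part (b), note that since $k_i\in\Z_+$ we have $N_{thd}\ge 1$, so the hypothesis $N_{thd}\ll N_{sub}$ forces $N_{sub}$, and hence $N_{max}$, to be $\gg 1$; thus the standing assumptions of Lemma \ref{l41}(b) hold, and its two–case conclusion — according to whether $(k_i,j_i)=(k_{thd},j_{max})$ for some $i$ — transfers verbatim, yielding the factors $2^{(-k_{max}+k_{thd})/2}$ resp.\ $2^{(-k_{max}+k_{min})/2}$. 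The only point needing care is the bookkeeping: the labels $\mathrm{max}/\mathrm{sub}/\mathrm{thd}/\mathrm{min}$ among the four pairs $(k_i,j_i)$, $i=1,2,3,4$, do not care which three of them sit inside the convolution and which one is the dual function, which is immediate since, as already used in the proof of Lemma \ref{l41}, $|J(\cdot,\cdot,\cdot,\cdot)|$ is invariant under permuting its slots up to the reflection $\widetilde f(\xi,\mu)=f(-\xi,-\mu)$. There is thus no genuine analytic obstacle here; all of the work sits in Lemma \ref{l41}, and this corollary is merely its dual restatement — the mild "hard part" is just making sure the shift $\tau_i\mapsto\mu_i+\dr(\xi_i)$ is performed correctly so that the resonance $\Omega$ lands in the right slot.
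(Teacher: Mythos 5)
Your argument is correct and is essentially the paper's own proof: duality against $g$ supported in $D_{k_4,j_4}$, the substitution $\tau_i=\mu_i+\dr(\xi_i)$ producing the sharp functions and the resonance $\Omega$ so that the pairing becomes $J(f^\sharp_{k_1,j_1},f^\sharp_{k_2,j_2},f^\sharp_{k_3,j_3},g^\sharp)$, and then Lemma \ref{l41}. Your extra observation that $N_{thd}\ge 1$ forces $N_{max}\gg 1$ in part (b) is a correct (and welcome) piece of bookkeeping that the paper leaves implicit.
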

\begin{proof}
Clearly, we have
\begin{eqnarray}
&&\norm{1_{D_{k_4,j_4}}(\xi,\tau)(f_{k_1,j_1}*f_{k_2,j_2}*f_{k_3,j_3})(\xi,\tau)}_{L^2}\nonumber\\
&=&\sup_{\norm{f}_{L^2}=1}\aabs{\int_{D_{k_4,j_4}} f\cdot
f_{k_1,j_1}*f_{k_2,j_2}*f_{k_3,j_3} d\xi d\tau}.
\end{eqnarray}
Let $f_{k_4,j_4}=1_{D_{k_4,j_4}}\cdot f$, and then
$f_{k_i,j_i}^\sharp(\xi,\mu)=f_{k_i,j_i}(\xi,\mu+\dr(\xi))$,
$i=1,2,3,4$. The functions $f_{k_i,j_i}^\sharp$ are supported in
$I_{k_i}\times \cup_{|m|\leq 3} {I}_{j_i+m}$,
$\norm{f_{k_i,j_i}^\sharp}_{L^2}=\norm{f_{k_i,j_i}}_{L^2}$. Using
simple changes of variables, we get
\[\int_{D_{k_4,j_4}} f\cdot
f_{k_1,j_1}*f_{k_2,j_2}*f_{k_3,j_3} d\xi d\tau =
J(f_{k_1,j_1}^\sharp,f_{k_2,j_2}^\sharp,f_{k_3,j_3}^\sharp,f_{k_4,j_4}^\sharp).\]
Then Corollary \ref{cor42} follows from Lemma \ref{l41}.
\end{proof}

\section{Trilinear Estimate}

In this section we devote to prove the trilinear estimates. We
divide it into several cases. The first case is $low\times high
\rightarrow high$ interactions.

\begin{proposition}\label{p51} Assume $\delta\geq c_0$. Let $k_i\in \Z_+, N_i=2^{k_i}, i=1,2,3,4.$ Assume $N_3\gg 1$, $N_4\sim N_3$, $
N_1 \sim N_2\ll N_3$, and $f_{k_i}\in Z_{k_i}$ with
$\ft^{-1}(f_{k_i})$ compactly supported (in time) in $I$  with
$|I|\les 1$, $i=1,2,3$. Then
\begin{eqnarray}\label{eq:p51}
2^{k_4}\norm{\chi_{k_4}(\xi)(\tau-\dr(\xi)+i)^{-1}f_{k_1}*f_{k_2}*f_{k_3}}_{Z_{k_4}}\les
\  2^{(k_1+k_2)/2}\prod_{i=1}^3\norm{f_{k_i}}_{Z_{k_i}}.
\end{eqnarray}
\end{proposition}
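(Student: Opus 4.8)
The proof follows the standard dyadic-reduction/multilinear-form scheme. First I would decompose each input via the $Z_{k_i}$-structure, $f_{k_i}=\sum_{j_i\geq0}f_{k_i,j_i}+g_{k_i}$ with $f_{k_i,j_i}$ supported in $D_{k_i,j_i}$ and, when $k_i\geq M$, $g_{k_i}\in Y_{k_i}$ supported in modulation $\lesssim 2^{k_i}$; the $Y_{k_i}$-components (and, below, the $Y_{k_4}$-output) are handled by analogous but simpler arguments, so I focus on the $X$-atoms. Since $Z_{k_4}=X_{k_4}+Y_{k_4}$, I would estimate the output by its modulation: the part of modulation $\gtrsim 2^{k_4}$ in $X_{k_4}$, whose norm is the dyadic sum $\sum_{j_4}2^{j_4/2}\beta_{k_4,j_4}\norm{\cdot}_{L^2}$, and the part of modulation $\lesssim 2^{k_4}$ in $Y_{k_4}$, whose norm is $2^{-k_4/2}\norm{\ft^{-1}[(\tau-\dr(\xi)+i)(\cdot)]}_{L_x^1L_t^2}$.

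The key structural input is the resonance identity. Since $N_1\sim N_2\ll N_3$ and $N_4\sim N_3\gg1$, Lemma \ref{esresonance} gives $|\Omega(\xi_1,\xi_2,\xi_3)|\sim|\xi_1+\xi_2|\cdot N_3$; together with $\tau-\dr(\xi)=\Omega(\xi_1,\xi_2,\xi_3)+\sum_{i=1}^3(\tau_i-\dr(\xi_i))$ on the support of the convolution this forces $\max(L_1,L_2,L_3,L_4)\gtrsim|\xi_1+\xi_2|\cdot N_3$, so after a dyadic localization $|\xi_1+\xi_2|\sim 2^l$ (with $l\leq k_2+O(1)$) we have $L_{max}\gtrsim 2^{l+k_3}$, and when the output modulation dominates even $L_4\sim 2^{l+k_3}$, forcing $j_4$ into an essentially bounded range. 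It is this gain that must absorb the loss $2^{k_4}\sim N_3$ standing in front of the left-hand side; note also that the target $2^{(k_1+k_2)/2}\sim N_1$ (using $N_1\sim N_2$) is much smaller than $2^{k_4}$, so there is no slack.

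For the core $L^2$ bounds on $1_{D_{k_4,j_4}}(f_{k_1,j_1}*f_{k_2,j_2}*f_{k_3,j_3})$ I would begin from Corollary \ref{cor42}; but because $N_1\ll N_3$, Corollary \ref{cor42} alone falls short of the target by a factor of order $(N_3/N_1)^{1/2}$, so I must combine it with a refined bilinear estimate for the low-frequency convolution $f_{k_1,j_1}*f_{k_2,j_2}$ localized to $|\xi_1+\xi_2|\sim 2^l$: using the change of variables that puts $\Omega$ on a high-frequency integration variable (where $|\partial_\xi\Omega|\sim N_3$, as in the proof of Lemma \ref{l41}) together with Cauchy--Schwarz in the $\xi_1$-variable, which now ranges over a set of measure $\sim 2^l$, one recovers the extra $(2^l/N_1)^{1/2}$-type smallness that Corollary \ref{cor42} does not see. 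Inserting this into the $X_{k_4}$-sum, the factor $2^{-j_4}$ from $(\tau-\dr(\xi)+i)^{-1}$, the weight $2^{j_4/2}\beta_{k_4,j_4}$, and the resonance constraint $2^{j_4}\sim 2^{l+k_3}$ combine so that the sums over $j_4$ and $l$ become geometric and $2^{(k_1+k_2)/2}\prod_{i=1}^3\norm{f_{k_i}}_{Z_{k_i}}$ is recovered; for the $Y_{k_4}$-part, Lemma \ref{basicproperties}(c) first removes the modulation truncation, then the same refined bilinear bound together with $\norm{\ft^{-1}(f_{k_3})}_{L_x^2L_t^\infty}\les 2^{k_3/2}\norm{f_{k_3}}_{Z_{k_3}}$ from Lemma \ref{embedding} closes the estimate.

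The hard part is precisely this last point: the naive multilinear $L^2$ bounds lose a power of $N_3/N_1$, so the argument genuinely requires the refined bilinear estimate for comparable low frequencies carefully married to the sharp resonance identity, and one must then organize the resulting modulation and frequency-gap summations so that no logarithm survives — which is exactly the role of the tuned weights $\beta_{k,j}=1+2^{2(j-2k)/5}$, and the low-high interaction treated here is the one where that tuning is tight. A secondary technical nuisance is the bookkeeping for the $Y$-components and the harmless $O(1)$-overlap incurred in replacing the sharp cutoffs $1_{D_{k,j}}$ by smooth ones.
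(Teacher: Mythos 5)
Your overall architecture (split by output modulation, send the low-modulation part to $Y_{k_4}$ and the rest to $X_{k_4}$, and note that Corollary \ref{cor42} alone loses a factor of order $(N_3/N_1)^{1/2}$) is consistent with the paper, but the estimate you propose for the low-modulation half has a genuine gap, and that half is the heart of the proposition. After the weight $(\tau-\dr(\xi)+i)^{-1}$ cancels against the $Y_{k_4}$-weight and Lemma \ref{basicproperties} (c) removes the cutoff, what must be bounded is $2^{k_4/2}\norm{\ft^{-1}[f_{k_1}*f_{k_2}*f_{k_3}]}_{L^1_xL^2_t}$, and the only way to beat the remaining $2^{k_4/2}$ is to gain half a derivative from the \emph{high}-frequency factor: the paper applies H\"older as $L^1_xL^2_t\le L^\infty_xL^2_t\cdot L^2_xL^\infty_t\cdot L^2_xL^\infty_t$, placing the smoothing bound $\norm{\ft^{-1}(f_{k_3})}_{L^\infty_xL^2_t}\les 2^{-k_3/2}\norm{f_{k_3}}_{Z_{k_3}}$ on $f_{k_3}$ and the maximal bounds on $f_{k_1},f_{k_2}$, which is exactly where the factor $2^{(k_1+k_2)/2}$ comes from. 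You instead put the maximal bound $\norm{\ft^{-1}(f_{k_3})}_{L^2_xL^\infty_t}\les 2^{k_3/2}\norm{f_{k_3}}_{Z_{k_3}}$ on the high frequency (a loss of $2^{k_3/2}$) and ask a refined bilinear estimate on $f_{k_1}*f_{k_2}$ to compensate; but the two low-frequency factors carry no information about $N_3$, so no bilinear bound on them can supply the missing factor $\sim 2^{-k_3}$. Nor does the resonance help here: since $N_1\sim N_2$, $\xi_1+\xi_2$ can vanish, and in the regime where all modulations are $\les 2^{k_4}$ the identity $|\Omega|\sim|\xi_1+\xi_2|\,N_3$ only forces $|\xi_1+\xi_2|\les 2^{k_4-k_3}$ instead of giving a lower bound on $L_{max}$. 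This low-output-modulation configuration is precisely the one exhibited in Proposition \ref{countertrilinear}, so it cannot be recovered inside the $X_{k_4}$ sum or by tuning $\beta_{k,j}$ (those weights are decisive in the case $N_1\ll N_2\ll N_3$ of the next proposition, not here).

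For the remaining ranges your resonance-localized scheme is both heavier than needed and loosely bookkept: with $j_4\sim l+k_3$ the sums over $l$ and $j_4$ collapse to a single sum whose terms do not decay, so it is linear (of length $\sim k_1$), not geometric, and the cases where an input modulation dominates still have to be treated separately. The paper closes these ranges softly: for $2^{k_4}\les L_4\les 2^{2k_4+M}$ it uses H\"older with $L^\infty_xL^2_t\times L^4_xL^\infty_t\times L^4_xL^\infty_t$ (giving even $2^{(k_1+k_2)/4}$), and for $L_4\ges 2^{2k_4+M}$ it only uses $|\Omega|\ll 2^{2k_4}$ to force $L_{max}\sim L_{sub}$ together with Corollary \ref{cor42} (a). In short, the refined bilinear/resonance machinery is not what this proposition needs; what it needs, and what is missing from your argument, is the correct smoothing--maximal pairing in the $Y_{k_4}$ part.
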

\begin{proof}
We first divide it into three parts. Fix $M\gg 1$, then
\begin{eqnarray*}
&&2^{k_4}\norm{\chi_{k_4}(\xi)(\tau-\dr(\xi)+i)^{-1}f_{k_1}*f_{k_2}*f_{k_3}}_{Z_{k_4}}\\
&\leq&2^{k_4}\norm{\chi_{k_4}(\xi)\eta_{\leq k_4-1}(\tau-\dr(\xi))(\tau-\dr(\xi)+i)^{-1}f_{k_1}*f_{k_2}*f_{k_3}}_{Z_{k_4}}\\
&&+2^{k_4}\norm{\chi_{k_4}(\xi)\eta_{[k_4,2k_4+M]}(\tau-\dr(\xi))(\tau-\dr(\xi)+i)^{-1}f_{k_1}*f_{k_2}*f_{k_3}}_{Z_{k_4}}\\
&&+2^{k_4}\norm{\chi_{k_4}(\xi)\eta_{\geq
2k_4+M+1}(\tau-\dr(\xi))(\tau-\dr(\xi)+i)^{-1}f_{k_1}*f_{k_2}*f_{k_3}}_{Z_{k_4}}\\
&=&I+II+III.
\end{eqnarray*}

We consider first the contribution of I. Using $Y_k$ norm, then we
get from Lemma \ref{basicproperties} (c) and Lemma \ref{embedding}
that
\begin{eqnarray*}
I&\leq& 2^{k_4}\norm{\chi_{k_4}(\xi)\eta_{\leq
k_4-1}(\tau-\dr(\xi))(\tau-\dr(\xi)+i)^{-1}f_{k_1}*f_{k_2}*f_{k_3}}_{Y_{k_4}}\\
&\les& 2^{k_4/2}\norm{\ft^{-1}[f_{k_1}*f_{k_2}*f_{k_3}]}_{L_x^1L_t^2}\\
&\les& 2^{k_4/2}\norm{\ft^{-1}(f_{k_3})}_{L_x^\infty L_t^2}\norm{\ft^{-1}(f_{k_2})}_{L_x^2L_t^\infty}\norm{\ft^{-1}(f_{k_1})}_{L_x^2L_t^\infty}\\
&\les& 2^{(k_1+k_2)/2}\prod_{i=1}^3\norm{f_{k_i}}_{Z_{k_i}},
\end{eqnarray*}
which is \eqref{eq:p51} as desired.

For the contribution of II, we use $X_k$ norm. Then we get from
Lemma \ref{embedding} that
\begin{eqnarray*}
II&\leq&
2^{k_4}\norm{\chi_{k_4}(\xi)\eta_{[k_4,2k_4+M]}(\tau-\dr(\xi))(\tau-\dr(\xi)+i)^{-1}f_{k_1}*f_{k_2}*f_{k_3}}_{Z_{k_4}}\\
&\leq&\sum_{k_4\leq j \leq
2k_4+M}2^{k_4}2^{-j/2}\norm{1_{D_{k_4,j}}(\xi,\tau)f_{k_1}*f_{k_2}*f_{k_3}}_{L^2}\\
&\leq&\sum_{k_4\leq j \leq
2k_4+M}2^{k_4}2^{-j/2}\norm{\ft^{-1}(f_{k_3})}_{L_x^\infty L_t^2}\norm{\ft^{-1}(f_{k_1})}_{L_x^4 L_t^\infty}\norm{\ft^{-1}(f_{k_2})}_{L_x^4 L_t^\infty}\\
&\les& 2^{(k_1+k_2)/4}\prod_{i=1}^3\norm{f_{k_i}}_{Z_{k_i}}.
\end{eqnarray*}

Finally we consider the contribution of III. Let
$f_{k_i,j_i}(\xi,\tau)=f_{k_i}(\xi,\tau)\eta_{j_i}(\tau-\dr(\xi))$,
$j_i\geq 0$, $i=1,2,3$. Using $X_k$ norm, we get
\begin{eqnarray*}
III\leq \sum_{j_4\geq 2k_4+M+1}\sum_{j_1,j_2,j_3\geq
0}\norm{1_{D_{k_4,j_4}}(\xi,\tau)f_{k_1,j_1}*f_{k_2,j_2}*f_{k_3,j_3}}_{L^2}.
\end{eqnarray*}
Since in the area $\{|\xi_i|\in I_{k_i}, i=1,2,3\}$, we have
$|\Omega(\xi_1,\xi_2,\xi_3)|\ll 2^{2k_4+M}$. By checking the support
properties of $f_{k_i,j_i}$, we get $L_{max}\sim L_{sub}$. We
consider only  the worst case $L_4\sim L_3\ges L_1,L_2$, since the
other cases are better. It follows from Corollary \ref{cor42} and
Lemma \ref{elemes} (b) that
\begin{eqnarray*}
III&\les& \sum_{j_3\geq 2k_4+M+1}\sum_{j_1,j_2\geq
0}2^{(j_1+j_2)/2}2^{(k_1+k_2)/2}\norm{f_{k_1,j_1}}_{L^2}\norm{f_{k_2,j_2}}_{L^2}\norm{f_{k_3,j_3}}_{L^2}\\
&\les&\sum_{j_3\geq 2k_4+M+1}2^{j_3/4}2^{k_3-j_3}2^{(k_1+k_2)/2}2^{j_3-k_3}\norm{f_{k_1}}_{Z_{k_1}}\norm{f_{k_2}}_{Z_{k_2}}\norm{f_{k_3,j_3}}_{L^2}\\
&\les& \sum_{j_3\geq
2k_4+M+1}2^{k_3-\frac{3}{4}j_3}2^{(k_1+k_2)/2}\norm{f_{k_1}}_{Z_{k_1}}\norm{f_{k_2}}_{Z_{k_2}}\norm{f_{k_3}}_{Z_{k_3}}\\
&\les&
2^{(k_1+k_2)/4}\norm{f_{k_1}}_{Z_{k_1}}\norm{f_{k_2}}_{Z_{k_2}}\norm{f_{k_3}}_{Z_{k_3}}.
\end{eqnarray*}
Therefore, we complete the proof of the proposition.
\end{proof}

This proposition suffices to control $high\times low$ interaction in
the case that the two low frequences is comparable. However, for the
case that the two low frequences is not comparable, we will need an
improvement.

\begin{proposition} Assume $\delta\geq c_0$. Let $k_i\in \Z_+, N_i=2^{k_i}, i=1,2,3,4.$ Assume $N_3\gg 1$, $N_4\sim N_3$, $
N_1 \ll N_2\ll N_3$, and $f_{k_i}\in Z_{k_i}$ with
$\ft^{-1}(f_{k_i})$ compactly supported (in time) in $I$ with
$|I|\les 1$, $i=1,2,3$. Then
\begin{eqnarray*}
2^{k_4}\norm{\chi_{k_4}(\xi)(\tau-\dr(\xi)+i)^{-1}f_{k_1}*f_{k_2}*f_{k_3}}_{Z_{k_4}}\les
2^{(k_1+k_2)/4}\prod_{i=1}^3\norm{f_{k_i}}_{Z_{k_i}}.
\end{eqnarray*}
\end{proposition}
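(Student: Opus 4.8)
The plan is to mimic the proof of Proposition \ref{p51}, splitting the output modulation $\tau-\dr(\xi)$ into the three regimes $\eta_{\leq k_4-1}$, $\eta_{[k_4,2k_4+M]}$, $\eta_{\geq 2k_4+M+1}$ and writing the left-hand side as $I+II+III$. Only the low-modulation piece $I$ will need a new idea; the other two come straight out of the arguments already used for Proposition \ref{p51}. I note in passing that, since $N_1\ll N_2$ forces $|\xi_1+\xi_2|\sim N_2$, Lemma \ref{esresonance} even yields $|\Omega(\xi_1,\xi_2,\xi_3)|\sim N_2N_3$, but for $II$ and $III$ the crude bound $|\Omega|\les N_3^2\ll 2^{2k_4+M}$ is all that is needed.

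For $II$ and $III$ I would simply repeat the estimates of Proposition \ref{p51}; they already produce the bound $2^{(k_1+k_2)/4}\prod_{i=1}^3\norm{f_{k_i}}_{Z_{k_i}}$. For $II$ one uses the $X_{k_4}$-norm together with the smoothing estimate on $f_{k_3}$ and the two $L_x^4L_t^\infty$ maximal estimates on $f_{k_1},f_{k_2}$ from Lemma \ref{embedding}, plus $\sum_{j_4\geq k_4}2^{-j_4/2}\les 2^{-k_4/2}$ and $N_4\sim N_3$. For $III$ one decomposes each $f_{k_i}$ into modulation pieces $f_{k_i,j_i}$, uses $\tau-\dr(\xi)=\sum_{i=1}^3(\tau_i-\dr(\xi_i))+\Omega$ with $|\Omega|\ll 2^{2k_4+M}$ to conclude $L_{max}\sim L_{sub}$, applies Corollary \ref{cor42}(a) in the worst case $L_4\sim L_3\ges L_1,L_2$, and sums; the convergence is supplied by the weight $\beta_{k_4,j_4}\sim 2^{2(j_4-2k_4)/5}$, which for $j_4\geq 2k_4+M$ makes $2^{k_4}2^{j_4/2}\beta_{k_4,j_4}2^{-j_4}\les 2^{-M/10}2^{-(j_4-2k_4)/10}$.

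The real point is the term $I$, for which Proposition \ref{p51} only produces $2^{(k_1+k_2)/2}$. As there, I would use the $Y_{k_4}$-norm together with Lemma \ref{basicproperties}(c) to reduce $I$ to $2^{k_4/2}\norm{\ft^{-1}[f_{k_1}*f_{k_2}*f_{k_3}]}_{L_x^1L_t^2}$. The improvement comes from regrouping this product: instead of three separate Strichartz estimates I would pair the two higher-frequency factors, so that, for fixed $x$,
\begin{equation*}
\norm{\ft^{-1}(f_{k_1})\,\ft^{-1}(f_{k_2})\,\ft^{-1}(f_{k_3})}_{L_t^2}\leq\norm{\ft^{-1}(f_{k_1})}_{L_t^\infty}\,\norm{\ft^{-1}(f_{k_2})\,\ft^{-1}(f_{k_3})}_{L_t^2},
\end{equation*}
whence, by Cauchy--Schwarz in $x$,
\begin{equation*}
\norm{\ft^{-1}[f_{k_1}*f_{k_2}*f_{k_3}]}_{L_x^1L_t^2}\leq\norm{\ft^{-1}(f_{k_1})}_{L_x^2L_t^\infty}\,\norm{\ft^{-1}(f_{k_2})\,\ft^{-1}(f_{k_3})}_{L^2_{x,t}}.
\end{equation*}
The first factor is $\les 2^{k_1/2}\norm{f_{k_1}}_{Z_{k_1}}$ by the maximal function estimate in Lemma \ref{embedding}. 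For the second I would invoke the high--low bilinear estimate
\begin{equation*}
\norm{\ft^{-1}(f_{k_2})\,\ft^{-1}(f_{k_3})}_{L^2_{x,t}}\les 2^{-k_3/2}\norm{f_{k_2}}_{Z_{k_2}}\norm{f_{k_3}}_{Z_{k_3}},\qquad N_2\ll N_3,\ N_3\gg 1,
\end{equation*}
which is proved by the same Cauchy--Schwarz/change-of-variables scheme as Lemma \ref{l41}(d): after localizing the two inputs to modulations $2^{j_2},2^{j_3}$, the transversality $|\dr'(\xi_2)-\dr'(\xi_3)|\sim N_3$ furnished by Lemma \ref{elemes} bounds, for each fixed output $(\xi,\tau)$, the measure of the admissible set of $(\xi_2,\tau_2)$ by $\les 2^{j_2+j_3}N_3^{-1}$, and summing the pieces recovers the $Z_k$-norms with the gain $N_3^{-1/2}$. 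Combining these with $N_4\sim N_3$ and $k_1\leq k_2$,
\begin{equation*}
I\les 2^{k_4/2}\cdot 2^{k_1/2}\cdot 2^{-k_3/2}\prod_{i=1}^3\norm{f_{k_i}}_{Z_{k_i}}\les 2^{k_1/2}\prod_{i=1}^3\norm{f_{k_i}}_{Z_{k_i}}\les 2^{(k_1+k_2)/4}\prod_{i=1}^3\norm{f_{k_i}}_{Z_{k_i}},
\end{equation*}
which together with the bounds for $II$ and $III$ closes the argument.

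The step I expect to need the most care is the bilinear estimate above, in particular its uniformity in $\delta\geq c_0$ — which is exactly why one wants the lower bound $|\dr'(\xi)|\sim|\xi|$ of Lemma \ref{elemes} for $|\xi|\ges 1/\delta$ (the range $|\xi_2|\les 1/\delta$ being harmless, since there $|\dr'(\xi_2)|\ll N_3$) — together with the routine but slightly tedious treatment of the $Y_k$-component of $f_{k_3}$; the rest is the bookkeeping already carried out in Proposition \ref{p51}.
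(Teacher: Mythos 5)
Your decomposition $I+II+III$ and your treatment of $II$ and $III$ coincide with the paper's proof (which simply reuses the corresponding parts of Proposition \ref{p51}), but for the low-modulation term $I$ you take a genuinely different route. The paper exploits the resonance: by Lemma \ref{esresonance} one has $|\Omega|\sim 2^{k_2+k_3}$ in this regime, so it splits $f_{k_3}$ at modulation $2^{k_2+k_3-M}$; the high-modulation piece is handled in $Y_{k_4}$ with two $L^4_xL^\infty_t$ maximal bounds on $f_{k_1},f_{k_2}$ and the gain $2^{k_4/2}\norm{f^h_{k_3}}_{L^2}\les\norm{f_{k_3}}_{Z_{k_3}}$, while the low-modulation piece forces $\max(L_1,L_2)\ges 2^{k_2+k_3}$ and is closed with Corollary \ref{cor42}(b) together with the weights $\beta_{k_i,j_i}$ of \eqref{eq:betakj} on the low-frequency factors. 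You instead keep all of $I$ in $Y_{k_4}$ and replace this resonance/modulation bookkeeping by a high-low bilinear $L^2_{x,t}$ estimate with gain $2^{-k_3/2}$ paired with the $L^2_xL^\infty_t$ maximal bound on $f_{k_1}$, giving $2^{k_4/2}2^{k_1/2}2^{-k_3/2}\les 2^{k_1/2}\les 2^{(k_1+k_2)/4}$. Both mechanisms rest on the same transversality $|\dr'(\xi_2)-\dr'(\xi_3)|\sim 2^{k_3}$, uniform in $\delta\geq c_0$ by Lemma \ref{elemes}; your version avoids Lemma \ref{esresonance} and the $\beta$-weights in this term, at the price of needing the bilinear estimate at the level of the $Z_k$ spaces.

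That bilinear estimate is where "routine but slightly tedious" is too optimistic, and it is the one step you must actually write out. Your Cauchy--Schwarz/change-of-variables argument does prove it for the $X_{k_2}\times X_{k_3}$ components (measure bound $\les 2^{j_2+j_3}2^{-k_3}$, then the $2^{j/2}$ weights absorb the sums). But if either factor carries a $Y_k$ component, the obvious routine route --- decompose it into modulation pieces and invoke Lemma \ref{basicproperties}(b) --- loses a factor $\sim k_3$ from the sum over $j\leq k_3$, and this log is fatal here: your margin is only $2^{(k_2-k_1)/4}$, which stays bounded when $k_2-k_1$ is bounded while $k_3\to\infty$; this is exactly the divergence the $Y_k$ spaces were designed to remove (cf. Proposition \ref{countertrilinear}). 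The estimate is nevertheless true for $Z_{k_2}\times Z_{k_3}$, but to get it you must argue as in the proof of Lemma \ref{embedgen}: write the $Y_k$ piece via \eqref{eq:gkform} as an $X_k$-bounded error plus a free solution $\U(t)\phi$ multiplied by a bounded function of $x$ alone, and then apply the free bilinear estimate (foliating the other factor over its modulation). The cheap substitute $\norm{uv}_{L^2_{x,t}}\leq\norm{u}_{L^2_xL^\infty_t}\norm{v}_{L^\infty_xL^2_t}$ only yields $2^{k_2/2}2^{-k_3/2}$ and would land you back at the $2^{(k_1+k_2)/2}$ bound of Proposition \ref{p51}. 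With that bilinear lemma proved in full, your argument closes.
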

\begin{proof}
We first observe that in this case we get from Lemma
\ref{esresonance} that
\begin{equation}
|\Omega(\xi_1,\xi_2,\xi_3)|\sim 2^{k_3+k_2}.
\end{equation}
Dividing it into three parts and fixing an integer $M$ such that
$N_2\geq M\gg 1$, we obtain
\begin{eqnarray*}
&&2^{k_4}\norm{\chi_{k_4}(\xi)(\tau-\dr(\xi)+i)^{-1}f_{k_1}*f_{k_2}*f_{k_3}}_{Z_{k_4}}\\
&\leq&2^{k_4}\norm{\chi_{k_4}(\xi)\eta_{\leq k_4-1}(\tau-\dr(\xi))(\tau-\dr(\xi)+i)^{-1}f_{k_1}*f_{k_2}*f_{k_3}}_{Z_{k_4}}\\
&&+2^{k_4}\norm{\chi_{k_4}(\xi)\eta_{[k_4,2k_4+M]}(\tau-\dr(\xi))(\tau-\dr(\xi)+i)^{-1}f_{k_1}*f_{k_2}*f_{k_3}}_{Z_{k_4}}\\
&&+2^{k_4}\norm{\chi_{k_4}(\xi)\eta_{\geq
2k_4+M+1}(\tau-\dr(\xi))(\tau-\dr(\xi)+i)^{-1}f_{k_1}*f_{k_2}*f_{k_3}}_{Z_{k_4}}\\
&=&I+II+III.
\end{eqnarray*}
For the last two terms II, III,  we can use the same argument as for
II, III in the proof of Proposition \ref{p51}. We consider now the
first term I.
\begin{eqnarray*}
I&\leq&2^{k_4}\norm{\chi_{k_4}(\xi)\eta_{\leq
k_4-1}(\tau-\dr(\xi))(\tau-\dr(\xi)+i)^{-1}f_{k_1}*f_{k_2}*f_{k_3}^h}_{Z_{k_4}}\\
&&+2^{k_4}\norm{\chi_{k_4}(\xi)\eta_{\leq
k_4-1}(\tau-\dr(\xi))(\tau-\dr(\xi)+i)^{-1}f_{k_1}*f_{k_2}*f_{k_3}^l}_{Z_{k_4}}\\
&=&I_1+I_2,
\end{eqnarray*}
where
\[f_{k_3}^h=f_{k_3}(\xi,\tau)\eta_{\geq k_3+k_2-M}(\tau-\dr(\xi)),\ f_{k_3}^l=f_{k_3}(\xi,\tau)\eta_{\leq k_3+k_2-M+1}(\tau-\dr(\xi)).\]

For the contribution of $I_1$, we observe first that from the
support of $f_{k_3}^h$ and the definition of $Y_{k}$, one easily get
that
\begin{eqnarray*}
\norm{f_{k_3}^h}_{X_{k_3}}\les \norm{f_{k_3}}_{Z_{k_3}}.
\end{eqnarray*}
Thus from the definition of $Y_k$, and from H\"older's inequality,
Lemma \ref{basicproperties}, Lemma \ref{embedding} (b),  we get
\begin{eqnarray*}
I_1&\les&2^{k_4}\norm{\chi_{k_4}(\xi)\eta_{\leq
k_4-1}(\tau-\dr(\xi))(\tau-\dr(\xi)+i)^{-1}f_{k_1}*f_{k_2}*f_{k_3}^h}_{Y_{k_4}}\\
&\les&2^{k_4/2}\norm{\ft^{-1}[f_{k_1}*f_{k_2}*f_{k_3}^h]}_{L_x^1L_t^2}\\
&\les&2^{k_4/2}\norm{\ft^{-1}(f_{k_3}^h)}_{L_x^2L_t^2}\norm{\ft^{-1}(f_{k_1})}_{L_x^4L_t^\infty}\norm{\ft^{-1}(f_{k_2})}_{L_x^4L_t^\infty}\\
&\les&2^{k_4/2}2^{(k_1+k_2)/4}\norm{f_{k_3}^h}_{L^2}\norm{f_{k_1}}_{Z_{k_1}}\norm{f_{k_2}}_{Z_{k_2}}.
\end{eqnarray*}
Then from the fact that
\begin{eqnarray*}
2^{k_4/2}\norm{f_{k_3}^h}_{L^2}&\les& \sum_{j\geq
k_4+k_2-10}2^{k_4/2}\norm{f_{k_3}^h\eta_{j}(\tau-\dr(\xi))}_{L^2}\\
&\les&\norm{f_{k_3}^h}_{X_{k_3}}\les \norm{f_{k_3}}_{Z_{k_3}}
\end{eqnarray*}
we conclude the proof for $I_1$.

We consider now the contribution of $I_2$ where $\beta_{k,j}$ plays
crucial roles. Let
$f_{k_i,j_i}(\xi,\tau)=f_{k_i}(\xi,\tau)\eta_{j_i}(\tau-\dr(\xi))$,
$j_i\geq 0$, $i=1,2,3$. Using $X_k$ norm, we get
\begin{eqnarray*}
I_2\leq \sum_{j_4\leq k_4-1}\sum_{j_3\leq k_4+k_2-M,j_1,j_2\geq
0}2^{k_4-j_4/2}\norm{1_{D_{k_4,j_4}}f_{k_1,j_1}*f_{k_2,j_2}*f_{k_3,j_3}}_{L^2}.
\end{eqnarray*}
From the support properties, we get that
$1_{D_{k_4,j_4}}(\xi,\tau)f_{k_1,j_1}*f_{k_2,j_2}*f_{k_3,j_3}\equiv
0$ unless
\begin{eqnarray*}
\left \{
\begin{array}{l}
L_1\sim L_2 \ges N_3N_2; \mbox{ or }\\
L_2\ll L_1 \sim N_3N_2; \mbox{ or } L_1\ll L_2 \sim N_3N_2.
\end{array}
\right.
\end{eqnarray*}
If $L_1\sim L_2 \ges N_3N_2$, it follows from Corollary \ref{cor42}
(b) and Lemma \ref{elemes} (b) that
\begin{eqnarray*}
I_2&\les& \sum_{j_3,j_4\leq 2k_4}\sum_{j_1,j_2\geq
0}2^{k_4}2^{-j_4/2}2^{(j_2+j_3+j_4)/2}2^{-k_3/2}2^{k_1/2}\prod_{i=1}^3\norm{f_{k_i,j_i}}_2\\
&\les& \sum_{j_1,j_2\geq 0}k_4^2
2^{j_2/2}2^{k_3/2}2^{k_1/2}\prod_{i=1}^2\norm{f_{k_i,j_i}}_2
\norm{f_{k_3}}_{Z_{k_3}}\\
&\les& \sum_{j_1,j_2\geq
0}k_4^2 2^{j_2/2}2^{k_3/2}2^{k_1/2}2^{\frac{4}{5}k_1+\frac{4}{5}k_2-\frac{9}{10}j_1-\frac{9}{10}j_2}\prod_{i=1}^2(2^{\frac{9}{10}j_i-\frac{4}{5}k_i}\norm{f_{k_i,j_i}}_2) \norm{f_{k_3}}_{Z_{k_3}}\\
&\les&2^{(k_1+k_2)/4}\prod_{i=1}^3\norm{f_{k_i}}_{Z_{k_i}},
\end{eqnarray*}
which is acceptable. If $L_1\ll L_2 \sim N_3N_2$, it follows from
Corollary \ref{cor42} (b) that
\begin{eqnarray*}
I_2&\les& \sum_{j_1,j_3,j_4\leq 2k_4}\sum_{j_2\geq
0}2^{k_4}2^{(j_1+j_3)/2}2^{-k_3/2}2^{k_2/2}\prod_{i=1}^3\norm{f_{k_i,j_i}}_2\\
&\les&
k_4^32^{k_4}2^{\frac{4}{5}k_2-\frac{9}{10}j_2}2^{-k_3/2}2^{k_2/2}\prod_{i=1}^3\norm{f_{k_i}}_{Z_{k_i}}\les
2^{(k_1+k_2)/4}\prod_{i=1}^3\norm{f_{k_i}}_{Z_{k_i}}.
\end{eqnarray*}
The other case can be handled in the same way. Therefore, we
complete the proof of the proposition.
\end{proof}

\begin{proposition} Assume $\delta\geq c_0$.  Let $k_i\in \Z_+, N_i=2^{k_i}, i=1,2,3,4.$ Assume $N_3\gg 1$, $N_1\ll N_2\sim N_3\sim N_4$, and $f_{k_i}\in Z_{k_i}$ with
$\ft^{-1}(f_{k_i})$ compactly supported (in time) in $I$ with
$|I|\les 1$, $i=1,2,3$. Then
\begin{eqnarray*}
2^{k_4}\norm{\chi_{k_4}(\xi)(\tau-\dr(\xi)+i)^{-1}f_{k_1}*f_{k_2}*f_{k_3}}_{Z_{k_4}}\les
2^{(k_1+k_2)/4}\prod_{i=1}^3\norm{f_{k_i}}_{Z_{k_i}}.
\end{eqnarray*}
\end{proposition}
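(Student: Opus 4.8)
The plan is to follow the same three-part decomposition of the modulation variable $\tau-\dr(\xi)$ used in Proposition \ref{p51}, writing
\begin{eqnarray*}
&&2^{k_4}\norm{\chi_{k_4}(\xi)(\tau-\dr(\xi)+i)^{-1}f_{k_1}*f_{k_2}*f_{k_3}}_{Z_{k_4}}\\
&\leq& 2^{k_4}\norm{\chi_{k_4}(\xi)\eta_{\leq k_4-1}(\tau-\dr(\xi))(\tau-\dr(\xi)+i)^{-1}f_{k_1}*f_{k_2}*f_{k_3}}_{Z_{k_4}}\\
&&+2^{k_4}\norm{\chi_{k_4}(\xi)\eta_{[k_4,2k_4+M]}(\tau-\dr(\xi))(\tau-\dr(\xi)+i)^{-1}f_{k_1}*f_{k_2}*f_{k_3}}_{Z_{k_4}}\\
&&+2^{k_4}\norm{\chi_{k_4}(\xi)\eta_{\geq 2k_4+M+1}(\tau-\dr(\xi))(\tau-\dr(\xi)+i)^{-1}f_{k_1}*f_{k_2}*f_{k_3}}_{Z_{k_4}}\\
&=&I+II+III,
\end{eqnarray*}
with $M$ fixed so that $N_1\geq M$ if we are in the range $k_1\geq M$, and otherwise treating small $k_1$ separately. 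The high-modulation pieces $II$ and $III$ are handled exactly as the corresponding terms in Proposition \ref{p51}: for $II$ one passes to the $X_k$ norm, distributes the convolution via Corollary \ref{cor42}(c) (or the embeddings of Lemma \ref{embedding}), and sums the resulting geometric series in $j_4$ over $[k_4,2k_4+M]$; for $III$ one again uses the $X_k$ norm, notes that $|\Omega(\xi_1,\xi_2,\xi_3)|\ll 2^{2k_4+M}$ on the relevant frequency support so that $L_{max}\sim L_{sub}$, isolates the worst case where the large modulation sits on one of the inputs of size $\sim N_3$, and closes by Corollary \ref{cor42} together with the factors $\beta_{k,j}$ built into $Z_k$. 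These two contributions therefore present no new difficulty.

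The genuinely new point is the low-modulation term $I$, where $\tau-\dr(\xi)=O(2^{k_4})$ on the output, hence all of $L_1,L_2,L_3,L_4$ are $\les 2^{k_4}$, and one cannot gain from a large modulation. Here I would first use the $Y_{k_4}$ norm and Lemma \ref{basicproperties}(c) to reduce to estimating $2^{k_4/2}\norm{\ft^{-1}[f_{k_1}*f_{k_2}*f_{k_3}]}_{L_x^1L_t^2}$. The obstacle is that the two high inputs $f_{k_2},f_{k_3}$ both have frequency $\sim N_3$, so unlike in Proposition \ref{p51} we cannot simply put one of them in $L_x^\infty L_t^2$ (smoothing) and the other in a maximal-function norm and still win the $2^{(k_1+k_2)/2}$ gain — the bookkeeping of the $N_3$ powers is now delicate. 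The resolution, as in \cite{IK, Guo}, is to exploit the resonance identity: on the support in question, since $N_1\ll N_2\sim N_3$, Lemma \ref{esresonance} gives $|\Omega(\xi_1,\xi_2,\xi_3)|\sim |\xi_1+\xi_2|\cdot|\xi_3|$, and because the output modulation is forced to be $\les 2^{k_4}\sim 2^{k_3}$ we must have $|\xi_1+\xi_2|\les 1$, i.e. $\xi_2\approx-\xi_1$ up to $O(1)$; equivalently one of the inputs $f_{k_2}$ or $f_{k_3}$ carries modulation $\ges N_3|\xi_1+\xi_2|$. I would split $f_{k_3}=f_{k_3}^h+f_{k_3}^l$ (and similarly localize the low-frequency pair) according to whether this modulation bound is saturated, exactly paralleling the $I_1,I_2$ split of the previous proposition.

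For the high-modulation-on-$f_{k_3}$ piece $I_1$ one uses $\norm{f_{k_3}^h}_{X_{k_3}}\les\norm{f_{k_3}}_{Z_{k_3}}$, puts $\ft^{-1}(f_{k_3}^h)$ into $L_x^2L_t^2$ and the two low-frequency factors into $L_x^4L_t^\infty$ (Lemma \ref{embedding}(b)), and recovers the $2^{k_4/2}\norm{f_{k_3}^h}_{L^2}\les\norm{f_{k_3}}_{Z_{k_3}}$ loss exactly as before. For $I_2$, where all modulations are small, one passes to the $X_k$ norm, observes from the support analysis that $1_{D_{k_4,j_4}}f_{k_1,j_1}*f_{k_2,j_2}*f_{k_3,j_3}\equiv0$ unless the modulations satisfy $L_1\sim L_2\ges$ (some power tied to $|\xi_1+\xi_2|N_3$), or $L_1$ and $L_2$ are comparably constrained, and then applies Corollary \ref{cor42}(b)/(d) — crucially using the $\beta_{k,j}=1+2^{2(j-2k)/5}$ weights in $Z_{k_1},Z_{k_2}$ to absorb the surviving powers of $j_1,j_2$ and sum the series, yielding the final bound $2^{(k_1+k_2)/4}\prod_{i=1}^3\norm{f_{k_i}}_{Z_{k_i}}$. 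The case of small $k_1$ (below $M$) is handled directly since then $Z_{k_1}=X_{k_1}$ and there is no $Y$-component, so one may use the $L_x^4L_t^\infty$ and $L_x^2L_{t\in I}^\infty$ maximal estimates of Lemma \ref{embedding}(b) with no delicate summation. The main difficulty, as indicated, is isolating and exploiting the almost-resonance $|\xi_1+\xi_2|\les 1$ in term $I$; once that structural observation is in place the estimates are routine applications of Corollary \ref{cor42} and the tailored weights $\beta_{k,j}$.
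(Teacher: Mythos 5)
There is a genuine gap, and it sits exactly where you flag the ``genuinely new point'': your analysis of the low output-modulation term $I$ rests on a misreading of the frequency configuration. First, restricting the output modulation to $\les 2^{k_4}$ does \emph{not} force $L_1,L_2,L_3\les 2^{k_4}$; the input modulations remain unrestricted, so the premise of your almost-resonance discussion is already false. Second, in this regime $|\xi_1|\sim N_1\ll N_2\sim|\xi_2|$, so $|\xi_1+\xi_2|\sim|\xi_2|\sim N_3$ on the whole support: the scenario $|\xi_1+\xi_2|\les 1$, i.e. $\xi_2\approx-\xi_1$, is impossible, and the $I_1/I_2$ split you transplant from the preceding case $N_1\ll N_2\ll N_3$ (where $|\xi_1+\xi_2|\sim N_2$ and the threshold $N_2N_3$ is meaningful) has no counterpart here. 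Third, your proposed use of Corollary \ref{cor42}(b) in $I_2$ is inapplicable, since that part requires $N_{thd}\ll N_{sub}$, whereas here $N_{thd}\sim N_{sub}\sim N_3$. So the central term of your argument does not close as written.

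The correct structural fact, and the one the paper exploits, is the opposite of what you assumed: by Lemma \ref{esresonance}, $|\Omega(\xi_1,\xi_2,\xi_3)|\sim|\xi_1+\xi_2|\cdot|\xi_3|\sim 2^{2k_3}$ \emph{identically} on the support, so whenever the output modulation is small some input modulation must be $\ges N_3^2$, and this large-modulation gain is available everywhere. Consequently the paper needs neither the three-part decomposition $I+II+III$ nor the $Y_{k_4}$ norm for this case: it estimates the output entirely in $X_{k_4}$ (with the weight $2^{-j_4/2}(1+2^{(j_4-2k_4)/2})$), notes that the convolution vanishes unless $L_{max}\sim L_{sub}\ges N_3^2$ or $L_{sub}\ll L_{max}\sim N_3^2$, and applies Corollary \ref{cor42}(a) in the first case and \ref{cor42}(c) in the second, summing via Lemma \ref{basicproperties}(b) and the $\beta_{k,j}$ weights to get the factor $2^{(k_1+k_2)/4}$ (using $k_1\ll k_2\sim k_3\sim k_4$). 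Your treatment of $II$ and $III$ along the lines of Proposition \ref{p51} is plausible, but it is also unnecessary; if you want to keep your decomposition, the fix for term $I$ is to replace the almost-resonance claim by the observation that $L_4\les 2^{k_4}$ together with $|\Omega|\sim 2^{2k_3}$ forces $\max(L_1,L_2,L_3)\ges 2^{2k_3}$, and then argue with Corollary \ref{cor42}(a)/(c) rather than (b).
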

\begin{proof}
We first observe that this case corresponds to an integration in the
area $\{|\xi_i|\in I_{k_i},\ i=1,2,3\}\cap \{|\xi_1+\xi_2+\xi_3|\in
I_{k_4}\}$, where we have from Lemma \ref{esresonance} that
\begin{equation}
|\Omega(\xi_1,\xi_2,\xi_3)|\sim 2^{2k_3}.
\end{equation}
Let
$f_{k_i,j_i}(\xi,\tau)=f_{k_i}(\xi,\tau)\eta_{j_i}(\tau-\dr(\xi))$,
$j_i\geq 0$, $i=1,2,3$. Using $X_k$ norm, we get
\begin{eqnarray}\label{eq:p531}
&&2^{k_4}\norm{\chi_{k_4}(\xi)(\tau-\dr(\xi)+i)^{-1}f_{k_1}*f_{k_2}*f_{k_3}}_{Z_{k_4}}\nonumber
\\ &\les& \sum_{j_i\geq
0}2^{k_4}2^{-j_4/2}(1+2^{(j_4-2k_4)/2})\norm{1_{D_{k_4,j_4}}(\xi,\tau)f_{k_1,j_1}*f_{k_2,j_2}*f_{k_3,j_3}}_{L^2}.
\end{eqnarray}
From the support properties of the functions $f_{k_i,j_i}$,
$i=1,2,3$, it is easy to see that
$1_{D_{k_4,j_4}}(\xi,\tau)f_{k_1,j_1}*f_{k_2,j_2}*f_{k_3,j_3}\equiv
0$ unless
\begin{eqnarray*}
\left \{
\begin{array}{l}
L_{max}\sim L_{sub} \ges N_3^2; \mbox{ or }\\
L_{sub}\ll L_{max}\sim N_3^2.
\end{array}
\right.
\end{eqnarray*}

If $L_{max}\sim j_{sub} \ges N_3^2$, it follows from Corollary
\ref{cor42} (a) that the right-hand side of \eqref{eq:p531} is
bounded by
\begin{eqnarray}\label{eq:p53c1}
\sum_{j_i\geq
0}2^{k_4}2^{(j_1+j_2+j_3)/2}(1+2^{(j_4-2k_4)/2})2^{-(j_{sub}+j_{max})/2}2^{(k_1+k_2)/2}\prod_{i=1}^3\norm{f_{k_i,j_i}}_2.
\end{eqnarray}
It suffices to consider the worst case $j_3,j_4=j_{max}, j_{sub}$.
We get from Lemma \ref{basicproperties} (b) that  \eqref{eq:p53c1}
is bounded by
\begin{eqnarray}
\sum_{j_3\geq
2k_3-10}2^{k_4}2^{-\frac{3}{4}j_3}2^{(k_1+k_2)/2}\prod_{i=1}^3\norm{f_{k_i}}_{Z_{k_i}}\les2^{(k_1+k_2)/4}\prod_{i=1}^3\norm{f_{k_i}}_{Z_{k_i}}.
\end{eqnarray}

If $L_{sub}\ll L_{max}\sim N_3^2$, then from Corollary \ref{cor42}
(c) we get that the right side of \eqref{eq:p531} is bounded by
\begin{eqnarray*}
\sum_{j_i\geq
0}2^{k_4}2^{(j_1+j_2+j_3)/2}2^{-j_{max}/2}\prod_{i=1}^3\norm{f_{k_i,j_i}}_2\les
2^{(k_1+k_2)/4}\prod_{i=1}^3\norm{f_{k_i}}_{Z_{k_i}},
\end{eqnarray*}
where we used Lemma \ref{basicproperties} (b). Thus, we complete the
proof of the proposition.
\end{proof}

\begin{proposition}  Assume $\delta\geq c_0$.  Let $k_i\in \Z_+, N_i=2^{k_i}, i=1,2,3,4.$ Assume $N_3\gg 1$, $N_1\sim N_2\sim N_3\sim N_4$, and $f_{k_i}\in Z_{k_i}$ with
$\ft^{-1}(f_{k_i})$ compactly supported (in time) in $I$ with
$|I|\les 1$, $i=1,2,3$. Then
\begin{eqnarray*}
2^{k_4}\norm{\chi_{k_4}(\xi)(\tau-\dr(\xi)+i)^{-1}f_{k_1}*f_{k_2}*f_{k_3}}_{Z_{k_4}}\les
2^{k_4}\prod_{i=1}^3\norm{f_{k_i}}_{Z_{k_i}}.
\end{eqnarray*}
\end{proposition}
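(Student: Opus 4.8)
The plan is to regard this as the fully non-separated, ``resonant'' case. When $N_1\sim N_2\sim N_3\sim N_4$ the resonance function $\Omega$ need not be large, and there is no dispersive smoothing to cash in, so one should not try to improve on the bound $2^{k_4}$: that factor is simply the full derivative already present on the left-hand side, and the only goal is to reproduce it. In particular I would not invoke the space $Y_{k_4}$ at all. Since $\norm{\cdot}_{Z_{k_4}}\le\norm{\cdot}_{X_{k_4}}$ by the definition \eqref{eq:Zk} of $Z_{k_4}$, it suffices to bound the $X_{k_4}$-norm of
\[
F(\xi,\tau):=\chi_{k_4}(\xi)\,(\tau-\dr(\xi)+i)^{-1}(f_{k_1}*f_{k_2}*f_{k_3})(\xi,\tau),
\]
which is supported in $I_{k_4}\times\R$.

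The steps are then routine. First, expand $\norm{F}_{X_{k_4}}=\sum_{j_4\ge0}2^{j_4/2}\beta_{k_4,j_4}\norm{\eta_{j_4}(\tau-\dr(\xi))F}_{L^2}$ as in \eqref{eq:Xk}. On the support of $\eta_{j_4}(\tau-\dr(\xi))$ one has $|(\tau-\dr(\xi)+i)^{-1}|\les2^{-j_4}$, so after discarding the cutoff $\eta_{j_4}$ inside the $L^2$ norm and using Plancherel, the $j_4$-th term is dominated by $2^{-j_4}\norm{\ft^{-1}(f_{k_1})\cdot\ft^{-1}(f_{k_2})\cdot\ft^{-1}(f_{k_3})}_{L^2}$, which no longer depends on $j_4$. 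Second, estimate this $L^2_{x,t}$ norm by H\"older, putting two of the factors in $L_x^4L_t^\infty$ and the third in $L_x^\infty L_t^2$, and apply the maximal-function and smoothing bounds of Lemma \ref{embedding}; this produces $2^{k_1/4}2^{k_2/4}2^{-k_3/2}\prod_{i=1}^3\norm{f_{k_i}}_{Z_{k_i}}$, and the hypothesis $N_1\sim N_2\sim N_3$ makes the frequency prefactor $O(1)$ (alternatively, all three factors can be put in the admissible Strichartz space $L^6_{t,x}$, giving the same conclusion with no frequency factors). Third, sum over $j_4$: $\sum_{j_4\ge0}2^{j_4/2}\beta_{k_4,j_4}2^{-j_4}=\sum_{j_4\ge0}2^{-j_4/2}\beta_{k_4,j_4}$, which by \eqref{eq:betakj} equals $\sum_{j_4\ge0}2^{-j_4/2}+2^{-4k_4/5}\sum_{j_4\ge0}2^{-j_4/10}\les1$. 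Multiplying by $2^{k_4}$ finishes the proof.

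The single step that is more than bookkeeping, and also the point, is the last summation: it converges only because $\beta_{k,j}$ grows like $2^{2(j-2k)/5}$ with exponent $2/5<1/2$; with the plain $X^{s,1/2}$ weight (no $\beta$, or $\beta\sim2^{(j-2k)/2}$) the slices with $j_4\les2k_4$ would contribute a logarithmic divergence in $k_4$, which is precisely one of the divergences the spaces $Z_k$ were designed to remove. Apart from this, the proposition is just the degenerate endpoint of the low/medium/high output-modulation trichotomy used in the proof of Proposition \ref{p51}: one could equally split off the piece with output modulation $\le k_4-1$ and handle it through the $Y_{k_4}$-norm as there, but with all four frequencies comparable that refinement is unnecessary.
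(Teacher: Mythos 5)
Your proof is correct, and it takes a genuinely lighter route than the paper in the high-modulation regime. The paper splits the output modulation at $2^{2k_4+M}$: the piece with $\tau-\dr(\xi)$ of size $\les 2^{2k_4}$ is handled exactly as you do (resolvent bound $|(\tau-\dr(\xi)+i)^{-1}|\les 2^{-j_4}$, then the admissible Strichartz/$L^6$ bound of Lemma \ref{embedding}), but the piece with modulation $\gg 2^{2k_4}$ is treated separately, by decomposing the inputs into modulation-dyadic pieces $f_{k_i,j_i}$, noting that the support properties force $L_{max}\sim L_{sub}\gg N_3^2$ there, and invoking Corollary \ref{cor42} together with Lemma \ref{basicproperties} (b). You bypass that second step entirely by observing that your crude, $j_4$-independent bound on $\norm{f_{k_1}*f_{k_2}*f_{k_3}}_{L^2}$ is already summable against the full weight, $\sum_{j_4\ge 0}2^{-j_4/2}\beta_{k_4,j_4}\les 1$ uniformly in $k_4$. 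This is legitimate precisely because the target bound $2^{k_4}$ concedes the whole derivative, so no gain from large modulation is needed; the paper's finer treatment of the high-modulation piece actually produces a stronger bound for that piece, which is simply not required for the stated estimate. Both routes rest on the same ingredients ($Z_k\hookrightarrow$ Strichartz/maximal/smoothing norms via Lemmas \ref{lineares} and \ref{embedding}), so yours is a clean simplification rather than a gap.

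One quibble with your closing commentary: with the plain weight $\beta\equiv 1$ the sum $\sum_{j_4\ge 0}2^{-j_4/2}$ already converges, so there is no logarithmic divergence in $k_4$ hiding in this particular estimate; the factor $\beta_{k,j}$ in fact makes your summation slightly harder (it still closes because the exponent $2/5$ is strictly less than $1/2$), and its real purpose, as the paper indicates, is to control the high-low interactions (e.g.\ the case $N_1\ll N_2\ll N_3$), not this balanced high-high-high case. This misattribution does not affect the validity of any step in your argument.
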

\begin{proof}
First we divide it into two parts. Fixing $M\gg 1$, then we have
\begin{eqnarray*}
&&2^{k_4}\norm{\chi_{k_4}(\xi)(\tau-\dr(\xi)+i)^{-1}f_{k_1}*f_{k_2}*f_{k_3}}_{Z_{k_4}}\\
&\les& 2^{k_4}\norm{\chi_{k_4}(\xi)\eta_{\leq 2k_4+M} (\tau-\dr(\xi))(\tau-\dr(\xi)+i)^{-1}f_{k_1}*f_{k_2}*f_{k_3}}_{Z_{k_4}}\\
&&+2^{k_4}\norm{\chi_{k_4}(\xi)\eta_{\geq 2k_4+M+1}
(\tau-\dr(\xi))(\tau-\dr(\xi)+i)^{-1}f_{k_1}*f_{k_2}*f_{k_3}}_{Z_{k_4}}\\
&=&I+II.
\end{eqnarray*}

We consider first the contribution of the first term $I$. Using the
$X_k$ norm and Lemma \ref{embedding} (a), then we get
\begin{eqnarray*}
I&\les& 2^{k_4}\sum_{j_4\geq
0}^{2k_4+20}2^{-j_4/2}\norm{1_{D_{k_4,j_4}}(\xi,\tau)f_{k_1}*f_{k_2}*f_{k_3}}_{L^2}\\
&\les&2^{k_4}\prod_{i=1}^3\norm{\ft^{-1}(f_{k_i})}_{L^6}\les
2^{k_4}\prod_{i=1}^3\norm{f_{k_i}}_{Z_{k_i}}.
\end{eqnarray*}

We consider now the contribution of the second term $II$. Let
$f_{k_i,j_i}(\xi,\tau)=f_{k_i}(\xi,\tau)\eta_{j_i}(\tau-\dr(\xi))$,
$j_i\geq 0$, $i=1,2,3$. Using the $X_k$ norm, we get
\begin{eqnarray}
II&\les&\sum_{j_4\geq 2k_4+20}\sum_{j_1,j_2,j_3\geq
0}\norm{1_{D_{k_4,j_4}}(\xi,\tau)f_{k_1,j_1}*f_{k_2,j_2}*f_{k_3,j_3}}_{L^2}.
\end{eqnarray}
Since in the area $\{|\xi_i|\in I_{k_i},\ i=1,2,3\}$ we have
$|\Omega(\xi_1,\xi_2,\xi_3)|\les 2^{2k_3}$, by checking the support
properties of the functions $f_{k_i,j_i}$, $i=1,2,3$, we get
$L_{max}\sim L_{sub}\gg N_3^2$. From symmetry, we assume
$j_3,j_4=j_{max}, j_{sub}$, then we get
\begin{eqnarray*}
II&\les&\sum_{j_4\geq 2k_4}\sum_{j_1,j_2,j_3\geq
0}2^{(j_1+j_2)/2}2^{k_3}2^{\frac{4}{5}k_3-\frac{9}{10}j_3}2^{\frac{9}{10}j_3-\frac{4}{5}k_3}\prod_{i=1}^3\norm{f_{k_i,j_i}}_2\\
&\les&\sum_{j_3\geq
2k_4}2^{2k_3}2^{-j_3/2}\prod_{i=1}^3\norm{f_{k_i}}_{Z_{k_i}}\les2^{k_4}\prod_{i=1}^3\norm{f_{k_i}}_{Z_{k_i}}.
\end{eqnarray*}
Therefore we complete the proof of the proposition.
\end{proof}

We consider now the case which corresponds to $high\times high $
interactions. This case is better than $high\times low$ interaction
case.
\begin{proposition} Assume $\delta\geq c_0$.  Let $k_i\in \Z_+,\ N_i=2^{k_i}, i=1,2,3,4.$ Assume $N_1\gg 1$, $N_4\ll N_1$, $N_3\les N_1\sim N_2$, and $f_{k_i}\in Z_{k_i}$ with $\ft^{-1}(f_{k_i})$ compactly
supported (in time) in $I$ with $|I|\les 1$, $i=1,2,3$. Then
\begin{eqnarray*}
2^{k_4}\norm{\eta_{k_4}(\xi)(\tau-\dr(\xi)+i)^{-1}f_{k_1}*f_{k_2}*f_{k_3}}_{Z_{k_4}}
\les {k^4_1}\prod_{i=1}^3\norm{f_{k_i}}_{Z_{k_i}}.
\end{eqnarray*}
\end{proposition}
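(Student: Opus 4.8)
The plan is to run the argument of Proposition~\ref{p51}, which here is considerably easier because we are in a $high\times high\rightarrow low$ regime. Write $f_{k_i,j_i}(\xi,\tau):=f_{k_i}(\xi,\tau)\eta_{j_i}(\tau-\dr(\xi))$ for $i=1,2,3$, so that $\norm{f_{k_i,j_i}}_{L^2}\les 2^{-j_i/2}\beta_{k_i,j_i}^{-1}\norm{f_{k_i}}_{Z_{k_i}}$ by Lemma~\ref{basicproperties}(b). Since $\norm{\cdot}_{Z_{k_4}}\leq\norm{\cdot}_{X_{k_4}}$ (and $Z_{k_4}=X_{k_4}$, $2^{k_4}\les1$, when $k_4<M$), it is enough to bound the left side with $Z_{k_4}$ replaced by $X_{k_4}$, which reduces matters---exactly as for the parts II and III in the proof of Proposition~\ref{p51}---to $\beta_{k_4,j_4}$-weighted sums (with weight $2^{-j_4/2}\beta_{k_4,j_4}$, the factor $(\tau-\dr(\xi)+i)^{-1}$ contributing $2^{-j_4}$) of $\norm{1_{D_{k_4,j_4}}(f_{k_1,j_1}*f_{k_2,j_2}*f_{k_3,j_3})}_{L^2}$, estimated by Corollary~\ref{cor42}. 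The one new observation is that $N_4\ll N_1\sim N_2$ and $N_3\les N_1$ force $N_{min}\ll N_{max}=N_1\gg1$, so Corollary~\ref{cor42}(d) is available for all choices of the $j_i$, with the gain $2^{-k_{max}}=2^{-k_1}$ (this is the bound proved in Lemma~\ref{l41}(d)). That is what makes the prefactor harmless: $2^{k_4}$ need not be small, but it is always $\les 2^{k_1}$, so $2^{k_4}\cdot 2^{-k_1}\les1$.

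Split the output modulation $\tau-\dr(\xi)$ into the ranges $\les 2^{2k_4}$ and $\ges 2^{2k_4}$. On the first, $\beta_{k_4,j_4}\sim1$, and Corollary~\ref{cor42}(d) together with the identity $2^{-j_4/2}\cdot 2^{(j_1+j_2+j_3+j_4)/2}\cdot\prod_{i=1}^3 2^{-j_i/2}=1$ bounds the contribution by
\[
2^{k_4-k_1}\Big(\sum_{0\leq j_4\les 2k_4}1\Big)\prod_{i=1}^3\Big(\sum_{j_i\geq0}\beta_{k_i,j_i}^{-1}\Big)\prod_{i=1}^3\norm{f_{k_i}}_{Z_{k_i}}\les k_1^{4}\prod_{i=1}^3\norm{f_{k_i}}_{Z_{k_i}},
\]
using $2^{k_4-k_1}\les1$, $\sum_{0\leq j_4\les 2k_4}1\les k_1$, and $\sum_{j}\beta_{k,j}^{-1}\les k\les k_1$. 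On the second range, $2^{-j_4/2}\beta_{k_4,j_4}\les 2^{-j_4/2}+2^{-j_4/10}2^{-4k_4/5}$; Corollary~\ref{cor42}(c), whose gain $2^{-j_{max}/2}$ (with $j_{max}\geq j_4$) cancels the factor $2^{j_4/2}$ from the convolution bound, together with $\sum_{j_4\ges 2k_4}(2^{-j_4/2}+2^{-j_4/10}2^{-4k_4/5})\les 2^{-k_4}$ and the remaining $\beta^{-1}$-series, bounds the contribution by $2^{k_4}\cdot 2^{-k_4}\cdot k_1^{O(1)}\prod_i\norm{f_{k_i}}_{Z_{k_i}}\les k_1^{3}\prod_i\norm{f_{k_i}}_{Z_{k_i}}$. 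Adding the two ranges gives the proposition.

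At the level of this proposition the work is thus purely combinatorial, and the factor $k_1^{4}$ is simply the product of the four elementary dyadic summations ($\sum_{j_1},\sum_{j_2},\sum_{j_3},\sum_{j_4}$) performed in the low-modulation range. The only substantive ingredient---the gain $2^{-k_{max}}$ in Lemma~\ref{l41}(d)---has already been isolated earlier; it rests on the structure of the resonance function here, namely the identity $\Omega(\xi_1,\xi_2,\xi_3)=\Omega_2(\xi_1,\xi_2)+\Omega_2(\xi_1+\xi_2,\xi_3)$ with $\Omega_2(a,b)=\dr(a)+\dr(b)-\dr(a+b)$, and the bound $|\Omega(\xi_1,\xi_2,\xi_3)|\sim 2^{k_1}\,|\xi_1+\xi_2|$ on the slice $\{|\xi_1+\xi_2|\sim 2^l\}$ (from Lemma~\ref{elemes} and the oddness of $\dr$, using that $|\xi_1+\xi_2|\ll N_1$ forces $\xi_1\approx-\xi_2$), summed over the $O(k_1)$ scales $l$. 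So if one wanted to point to a main obstacle it would be the sharpness of that resonance-based bound in every admissible sub-configuration---but that is exactly what Lemma~\ref{l41}(d) already provides, and the present proposition only invokes it.
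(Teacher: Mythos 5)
Your proof is correct and takes essentially the same route as the paper: decompose in modulation, estimate the output in the $X_{k_4}$ norm, and apply the symmetric estimates of Corollary \ref{cor42}, with the crucial $2^{-k_{max}}=2^{-k_1}$ gain of Lemma \ref{l41}(d) absorbing the derivative factor $2^{k_4}$ and the four dyadic modulation sums producing the $k_1^4$ loss (you also correctly note that the full $2^{-k_{max}}$ of the lemma, not the $2^{-k_{max}/2}$ printed in the corollary, is what is needed). The only minor difference is the case split: you separate $j_4\les 2k_4$ from $j_4\ges 2k_4$ and use part (c) in the high range, while the paper separates $L_{max}\les N_1^2$ from $L_{max}\gg N_1^2$ and there uses part (a) together with the observation $L_{max}\sim L_{sub}$; both variants close the argument.
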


\begin{proof}
Let
$f_{k_i,j_i}(\xi,\tau)=f_{k_i}(\xi,\tau)\eta_{j_i}(\tau-\dr(\xi))$,
$j_i\geq 0$, $i=1,2,3$. Using $X_k$ norm, then we get
\begin{eqnarray}\label{eq:hh}
&&2^{k_4}\norm{\chi_{k_4}(\xi)(\tau-\dr(\xi)+i)^{-1}f_{k_1}*f_{k_2}*f_{k_3}}_{Z_{k_4}}\nonumber\\
&\les& \sum_{j_i\geq
0}2^{k_4}2^{-j_4/2}(1+2^{(j_4-2{k_4})/2})\norm{1_{D_{k_4,j_4}}(\xi,\tau)f_{k_1,j_1}*f_{k_2,j_2}*f_{k_3,j_3}}_{L^2}.
\end{eqnarray}
If $L_{max}\les N_1^2$, then it follows from Corollary \ref{cor42}
(d) that the right side of \eqref{eq:hh} is bounded by
\begin{eqnarray*}
\sum_{j_i\geq
0}2^{k_4}(1+2^{(j_4-2{k_4})/2})2^{(j_1+j_2+j_3)/2}2^{-k_1}\prod_{i=1}^3\norm{f_{k_i,j_i}}_{L^2}\les{k^4_1}
\prod_{i=1}^3\norm{f_{k_i}}_{Z_{k_i}},
\end{eqnarray*}
where we used Lemma \ref{basicproperties} (b).

If $L_{max}\gg N_1^2$, then by checking the support properties, we
get $L_{max}\sim L_{sub}$. We consider only the worst case
$j_1,j_4=j_{max},j_{sub}$. It follows from Corollary \ref{cor42} (a)
and Lemma \ref{basicproperties} (b) that the right side of
\eqref{eq:hh} is bounded by
\begin{eqnarray*}
\sum_{j_i\geq
0}2^{k_4}2^{-j_4/2}(1+2^{(j_4-2{k_4})/2})2^{(j_2+j_3)/2}2^{k_4}\prod_{i=1}^3\norm{f_{k_i,j_i}}_{L^2}\les{k_1}\prod_{i=1}^3\norm{f_{k_i}}_{Z_{k_i}}.
\end{eqnarray*}
Therefore, we complete the proof of the proposition.
\end{proof}

The next proposition is used to control $low \times low$
interactions. This interaction is easy to control.

\begin{proposition} Assume $\delta\geq c_0$.  Let $k_i\in \Z_+, N_i=2^{k_i}, i=1,2,3,4.$ Assume $N_{max}\les 1$, and $f_{k_i}\in Z_{k_i}$, $i=1,2,3$. Then
\begin{equation}
2^{k_4}\norm{\eta_{k_4}(\xi)(\tau-\dr(\xi)+i)^{-1}f_{k_1}*f_{k_2}*f_{k_3}}_{Z_{k_4}}\les
\prod_{i=1}^3\norm{f_{k_i}}_{Z_{k_i}}.
\end{equation}
\end{proposition}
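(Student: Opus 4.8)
Here is how I would attack this final (low$\times$low) case.

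The key point is that the hypothesis $N_{max}\les 1$ forces all four frequencies $k_1,k_2,k_3,k_4$ to be $O(1)$; in particular each $k_i<M$, so $Z_{k_i}=X_{k_i}$ for $i=1,2,3,4$, while $2^{k_4}\sim 1$ and $\beta_{k_4,j}=1+2^{2(j-2k_4)/5}\les 1+2^{2j/5}$. Thus it is enough to bound the $X_{k_4}$-norm of $g:=\eta_{k_4}(\xi)(\tau-\dr(\xi)+i)^{-1}(f_{k_1}*f_{k_2}*f_{k_3})$ (which is automatically supported in $I_{k_4}\times\R$) by $\prod_{i=1}^3\norm{f_{k_i}}_{X_{k_i}}$.

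First I would absorb the multiplier $(\tau-\dr(\xi)+i)^{-1}$ into the $j$-sum defining the $X_{k_4}$-norm. On the support of $\eta_j(\tau-\dr(\xi))$ one has $|(\tau-\dr(\xi)+i)^{-1}|\les 2^{-j}$ for $j\geq 1$ and $\les 1$ for $j=0$, so
\[
\norm{g}_{X_{k_4}}\les \beta_{k_4,0}\norm{f_{k_1}*f_{k_2}*f_{k_3}}_{L^2}+\Big(\sum_{j\geq 1}2^{-j/2}\beta_{k_4,j}\Big)\norm{f_{k_1}*f_{k_2}*f_{k_3}}_{L^2}.
\]
Since $\beta_{k_4,0}\leq 2$ and $2^{-j/2}\beta_{k_4,j}\leq 2^{-j/2}+2^{-j/10}$, the $j$-series converges to a constant, and we are reduced to the single frequency-localized estimate $\norm{f_{k_1}*f_{k_2}*f_{k_3}}_{L^2_{\xi,\tau}}\les \prod_{i=1}^3\norm{f_{k_i}}_{Z_{k_i}}$.

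For this, I would pass to physical space. With $g_i=\ft^{-1}(f_{k_i})$, Plancherel gives $\norm{f_{k_1}*f_{k_2}*f_{k_3}}_{L^2_{\xi,\tau}}\sim \norm{g_1g_2g_3}_{L^2_{x,t}}$. Each $g_i$ has spatial Fourier support in the fixed compact set $I_{k_i}$ with $k_i=O(1)$, so Bernstein's inequality gives $\norm{g_i(\cdot,t)}_{L^\infty_x}\les \norm{g_i(\cdot,t)}_{L^2_x}$ for every $t$; hence $\norm{g_1g_2g_3(\cdot,t)}_{L^2_x}\les\prod_{i=1}^3\norm{g_i(\cdot,t)}_{L^2_x}$, and taking the $L^2_t$-norm followed by Hölder in $t$ with exponents $(\infty,\infty,2)$ yields $\norm{g_1g_2g_3}_{L^2_{x,t}}\les\norm{g_1}_{L^\infty_tL^2_x}\norm{g_2}_{L^\infty_tL^2_x}\norm{g_3}_{L^2_{x,t}}$. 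The first two factors are controlled by $\norm{f_{k_1}}_{Z_{k_1}}$ and $\norm{f_{k_2}}_{Z_{k_2}}$ through Lemma~\ref{embedgen} applied with $Y=L^\infty_tL^2_x$ and $s=0$ (the hypothesis $\norm{\U(t)f}_{L^\infty_tL^2_x}\les\norm{f}_{L^2}$ being just the unitarity of $\U(t)$ on $L^2_x$), while $\norm{g_3}_{L^2_{x,t}}=\norm{f_{k_3}}_{L^2_{\xi,\tau}}\leq\norm{f_{k_3}}_{X_{k_3}}=\norm{f_{k_3}}_{Z_{k_3}}$, since $\{\eta_j(\tau-\dr(\xi))\}_{j\geq0}$ is a partition of unity and $2^{j/2}\beta_{k,j}\geq1$ in the definition of $X_k$. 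Chaining the three displays finishes the proof.

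This is the mildest of the interaction regimes — there is no resonance to exploit and no logarithmic divergence — so I do not expect a genuine obstacle. The only points requiring care are the reductions $Z_{k_i}=X_{k_i}$ and $2^{k_4}\sim1$ coming from $N_{max}\les1$, the convergence of the $\beta_{k_4,j}$-weighted series in the first step, and the decision to argue via Bernstein together with the $L^\infty_tL^2_x$ bound rather than an $L^6_{t,x}$ Strichartz estimate, so that the argument stays uniform down to the zero frequency $k_i=0$, where the dispersion relation $\dr$ degenerates and Strichartz estimates are unavailable.
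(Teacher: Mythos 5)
Your argument is correct, but it follows a genuinely different route from the paper's. The paper decomposes the three inputs (and the output) dyadically in modulation, applies the symmetric estimate of Corollary \ref{cor42} (a) --- which produces the factor $2^{(j_{min}+j_{thd})/2}2^{(k_{min}+k_{thd})/2}$ --- and then sums the four modulation indices against the $X_k$ weights, noting that $L_{max}\sim L_{sub}$ once $j_{max}\gg 1$ because the resonance is $O(1)$ at these frequencies. You never decompose the inputs at all: you absorb the weight $(\tau-\dr(\xi)+i)^{-1}$ into the output $X_{k_4}$ sum (using that $2^{-j/2}\beta_{k_4,j}$ is summable when $k_4=O(1)$), reduce to the unweighted bound $\norm{f_{k_1}*f_{k_2}*f_{k_3}}_{L^2}\les \prod_{i=1}^3\norm{f_{k_i}}_{Z_{k_i}}$, and prove it in physical space via Bernstein (legitimate since all spatial frequencies are $O(1)$), H\"older in $t$ with exponents $(\infty,\infty,2)$, the embedding $\norm{\ft^{-1}(f_k)}_{L^\infty_tL^2_x}\les \norm{f_k}_{Z_k}$ from Lemma \ref{embedgen} (equivalently the consequence recorded in Lemma \ref{embedding}), and the trivial inequality $\norm{f_{k_3}}_{L^2}\leq \norm{f_{k_3}}_{X_{k_3}}$. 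What your route buys: no fourfold modulation sum, no appeal to Corollary \ref{cor42} or to the $L_{max}\sim L_{sub}$ support observation, uniformity down to $k_i=0$ without any Strichartz input, and no need for time localization (consistent with the statement, which, unlike the earlier propositions of Section 5, imposes none). What the paper's route buys: given the already-established symmetric estimate it is a two-line argument treated uniformly with the other interaction regimes, and its display records the slightly sharper factor $2^{(k_{min}+k_{thd})/2}$, which your proof would also yield if you tracked the Bernstein constants $2^{k_1/2}2^{k_2/2}$ instead of absorbing them. Your preliminary reductions ($k_i<M$ hence $Z_{k_i}=X_{k_i}$, and $2^{k_4}\sim 1$) are consistent with the paper's conventions, since both the implicit constant in $N_{max}\les 1$ and the choice of $M$ depend only on $c_0$.
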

\begin{proof}
Let
$f_{k_i,j_i}(\xi,\tau)=f_{k_i}(\xi,\tau)\eta_{j_i}(\tau-\dr(\xi))$,
$j_i\geq 0$, $i=1,2,3$. Using $X_k$ norm, Corollary \ref{cor42} (a)
and Lemma \ref{elemes} (b), then we get
\begin{eqnarray*}
&&2^{k_4}\norm{\eta_{k_4}(\xi)(\tau-\dr(\xi)+i)^{-1}f_{k_1}*f_{k_2}*f_{k_3}}_{Z_{k_4}}\\
&\les& \sum_{j_i\geq
0}L_{min}^{1/2}L_{thd}^{1/2}N_{min}^{1/2}N_{thd}^{1/2}\prod_{i=1}^3\norm{f_{k_i,j_i}}_{L^2}\les
2^{(k_{min}+k_{thd})/2}\prod_{i=1}^3\norm{f_{k_i}}_{Z_{k_i}},
\end{eqnarray*}
since for the case $j_{max}\gg 1$ we have $L_{max}\sim L_{sub}$ by
checking the support properties of the functions $f_{k_i,j_i}$,
$i=1,2,3$.
\end{proof}

Finally we present two counterexamples as in \cite{Guo}. The first
one shows why we use a $l^1$-type $X^{s,b}$ structure. The other one
shows a logarithmic divergence if we only use $X_k$ which is the
reason for us applying $Y_k$ structure.

\begin{proposition}\label{countertrilinear}
Let $\delta\geq c_0$. Assume $k\geq M$. Then there exist $f_{1}\in
X_1,\ f_k\in X_k$ such that
\begin{eqnarray}
2^{k}\norm{\eta_k(\xi)(\tau-\dr(\xi)+i)^{-1}f_1*f_1*f_k}_{X_k}\ges \
k \norm{f_1}_{X_1}\norm{f_1}_{X_1}\norm{f_k}_{X_k}.
\end{eqnarray}
\end{proposition}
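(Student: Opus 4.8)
The plan is to build an explicit $low\times low\times high$ interaction in which the resonance function $\Omega$ spreads the output over $\sim k$ dyadic modulation scales, each contributing equally to the defining sum of the $X_k$-norm. Put
\[
A=\{(\xi,\tau):\ \xi\in[1,2]\cup[-2,-1],\ \dr(\xi)\leq\tau\leq\dr(\xi)+1\},\quad
B=\{(\xi,\tau):\ \xi\in[2^k,2^k+1],\ \dr(\xi)\leq\tau\leq\dr(\xi)+1\},
\]
and set $f_1=1_A\in X_1$, $f_k=1_B\in X_k$. Both functions have bounded frequency support ($A\subset I_1\times\R$, $B\subset I_k\times\R$) and modulation $\les1$, so only finitely many $j$, all with $\beta_{k,j}\sim1$, enter the defining sums; hence $\norm{f_1}_{X_1}\sim1$ and $\norm{f_k}_{X_k}\sim1$, and it suffices to prove that $2^{k}\norm{\eta_k(\xi)(\tau-\dr(\xi)+i)^{-1}f_1*f_1*f_k}_{X_k}\ges k$.

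The geometric input is the size of the resonance. Given $(\xi_i,\tau_i)\in A$ for $i=1,2$ and $(\xi_3,\tau_3)\in B$, write $\eta=\xi_1+\xi_2$, $\xi_4=\xi_1+\xi_2+\xi_3$, $\tau_4=\tau_1+\tau_2+\tau_3$. When $\xi_1\xi_2<0$ the number $|\eta|$ may be arbitrarily small; then $|\xi_4|\sim 2^k$, $\dr(\xi_i)=O(1)$ for $i=1,2$, and $|\dr'(\xi_3)|\sim 2^k$ by Lemma \ref{elemes} (since $2^k\geq 2^M\gg 1/\delta$, as $\delta\geq c_0$ and $M=M(c_0)$ is large), so
\[
\tau_4-\dr(\xi_4)=\sum_{i=1}^{3}(\tau_i-\dr(\xi_i))-\Omega(\xi_1,\xi_2,\xi_3),\qquad |\Omega(\xi_1,\xi_2,\xi_3)|\sim|\eta|\,2^k,
\]
the three bracketed terms being $O(1)$ and the size of $\Omega$ following also from Lemma \ref{esresonance}. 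Hence, on the slice $|\eta|\sim 2^{-m}$, the convolution $f_1*f_1*f_k$ is supported, in the $\xi_4$ variable, in a fixed interval of length $\sim1$ near $2^k$ (where $\eta_k\equiv1$), and, in the $\tau_4$ variable, at the modulation scale $|\tau_4-\dr(\xi_4)|\sim 2^{k-m}$.

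Write $j=k-m$, so that $j$ runs over an interval $[C,k-C]$ of length $\sim k$. The central step is the pointwise bound $(f_1*f_1*f_k)(\xi_4,\tau_4)\ges 2^{-k}$ on a set $S_j\subset\{|\xi_4|\sim 2^k,\ |\tau_4-\dr(\xi_4)|\sim 2^j\}$ with $|S_j|\ges 2^j$. Since $f_1,f_k\geq0$, one lower-bounds the convolution by integrating only over the region $|\eta|\sim 2^{j-k}$: for fixed $(\xi_4,\tau_4)$ there the modulation windows coming from $A$ and from $B$ together confine $\xi_3$ to an interval of length $\sim 2^{-k}$, because $|\partial_{\xi_3}\Omega|=|\dr'(\xi_3)-\dr'(\xi_2)|\sim 2^k$ by Lemma \ref{elemes}, while the inner integration that builds $(f_1*f_1)(\eta,\cdot)$ contributes a factor $\ges1$ (its fiber has length $1-|\eta|\sim1$) and the residual $\tau_3$-integration a factor $\ges1$; thus the convolution is $\ges 2^{-k}$ there, and a Jacobian count in $(\xi_4,\tau_4)$ (the relevant derivative being $\sim 2^k$) gives $|S_j|\ges 2^j$. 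Therefore
\[
\norm{\eta_j(\tau-\dr(\xi))(\tau-\dr(\xi)+i)^{-1}f_1*f_1*f_k}_{L^2}\ges 2^{-j}2^{-k}|S_j|^{1/2}\ges 2^{-j/2}2^{-k},
\]
and, using only $\beta_{k,j}\geq1$,
\[
\norm{\eta_k(\xi)(\tau-\dr(\xi)+i)^{-1}f_1*f_1*f_k}_{X_k}\geq\sum_{j=C}^{k-C}2^{j/2}\beta_{k,j}\,2^{-j/2}2^{-k}\ges k\,2^{-k}.
\]
Multiplying through by $2^k$ yields the claimed inequality.

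I expect the pointwise lower bound of the third paragraph to be the main obstacle: one must verify that the $O(1)$-length modulation windows of $A$ and $B$ genuinely overlap on a set of $\xi_3$ of full size $\sim 2^{-k}$ and of $\tau_3$ of full size $\sim1$, rather than degenerating, and that the image set $S_j$ really has measure $\sim 2^j$ with the convolution bounded below throughout it. After possibly enlarging the length-$1$ windows in $A,B$ to a larger constant depending on $c_0$, and keeping $j$ away from the endpoints $1$ and $2^k$ (the discarded $O(1)$ many scales being harmless), this goes through. Everything else, namely the two-sided bounds on $\norm{f_1}_{X_1}$ and $\norm{f_k}_{X_k}$, the magnitude of $\Omega$, and the final summation, is routine given Lemma \ref{elemes}.
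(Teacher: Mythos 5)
Your construction is correct, but it takes a genuinely different route from the paper's. The paper's example keeps the two low-frequency factors on the same side of the origin ($\xi_1,\xi_2\in[1/2,1]$, so $|\Omega|\sim 2^k$ throughout) and instead loads the large modulation onto the high-frequency input, taking $f_k=\chi_{I_k}(\xi)\eta_k(\tau-\dr(\xi))$ with $\norm{f_k}_{X_k}\sim 2^{3k/2}$; that input modulation absorbs the resonance, the convolution is $\ges 1$ on an output region of measure $\sim 2^{k+j}$ for every $j\leq k/2$, and summing $2^{-j/2}\cdot 2^{(k+j)/2}\sim 2^{k/2}$ over these $\sim k$ scales produces the factor $k$ against $\norm{f_1}_{X_1}^2\norm{f_k}_{X_k}\sim 2^{3k/2}$. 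You instead keep all three inputs at modulation $O(1)$ with unit norms and put the two low-frequency pieces on opposite sides of the origin, so that $\eta=\xi_1+\xi_2$ can be made arbitrarily small; by Lemma \ref{esresonance} the slice $|\eta|\sim 2^{j-k}$ feeds output modulation $\sim 2^j$, and each $j\in[C,k-C]$ contributes equally after the weights, exhibiting the same logarithmic divergence from the other end. Your key pointwise bound does hold as stated: for $(\xi_4,\tau_4)$ with $\xi_4$ in a unit interval where $\chi_k\equiv1$ and $\tau_4-\dr(\xi_4)\sim2^j$, the variables $s_i=\tau_i-\dr(\xi_i)\in[0,1]$ and one low frequency are free over sets of measure $\sim1$, while the remaining constraint is that $\Omega$ hit a window of length $1$; since $\Omega$ sweeps monotonically (derivative $\sim2^k$ by Lemma \ref{elemes}) through an interval of length $\sim2^k$ containing the target value, the preimage is an interval of length $\sim2^{-k}$, so the convolution is $\ges 2^{-k}$ there and $|S_j|\sim2^j$, with a sign choice of $\eta$ matching the sign of $\tau_4-\dr(\xi_4)$. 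What each approach buys: the paper's is shorter to verify, since the lower bound on the convolution is essentially immediate; yours shows the divergence already for time-localized free-solution-type data with all $X_k$-norms $\sim1$, which makes the obstruction look more intrinsic. Neither example contradicts Proposition \ref{p51}, because in both cases the output sits at modulations below $2^{k}$, exactly the regime that the $Y_k$ component of $Z_k$ is designed to handle.
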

\begin{proof}
From the proof of Proposition \ref{p51}, we easily see that the
worst interaction comes from the case that largest frequency
component has a largest modulation. So we construct this case
explicitly. Let $I=[1/2,1]$, and take
\[f_1(\xi,\tau)=\chi_{I}(\xi)\eta_1(\tau-\dr(\xi)),\ f_k(\xi,\tau)=\chi_{I_k}(\xi)\eta_k(\tau-\dr(\xi)).\]
From definition, we easily get $\norm{f_1}_{X_1}\sim 1$ and
$\norm{f_k}_{X_k}\sim 2^{3k/2}$ and
\[2^{k}\norm{\eta_k(\xi)(\tau-\dr(\xi)+i)^{-1}f_1*f_1*f_k}_{X_k}\ges 2^{k}\sum_{j=0}^{k/2}2^{-j/2}\norm{1_{D_{k,j}}\cdot f_1*f_1*f_k}_{L_{\xi,\tau}^2}.\]
On the other hand, we have for $j\leq k/2$
\begin{eqnarray*}
&&1_{D_{k,j}}(\xi,\tau)\cdot f_1*f_1*f_k\\
&=&\int
f_1(\xi_1,\tau_1)f_2(\xi_2,\tau_2)f_k(\xi-\xi_1-\xi_2,\tau-\tau_1-\tau_2)d\xi_1d\xi_2d\tau_1d\tau_2\\
&=&\int
\chi_I(\xi_1)\chi_I(\xi_2)\eta_1(\tau_1)\eta_1(\tau_2)\chi_{I_k}(\xi-\xi_1-\xi_2)\\
&&\cdot \eta_k(
\tau-\tau_1-\tau_2-\dr(\xi_1)-\dr(\xi_2)-\dr(\xi-\xi_1-\xi_2))d\xi_1d\xi_2d\tau_1d\tau_2\\
&\ges&
\chi_{[\frac{2^{10}-1}{2^{10}}2^k,\frac{2^{10}+1}{2^{10}}2^k]}(\xi)\eta_j(\tau-\dr(\xi)).
\end{eqnarray*}
Therefore, we get
\begin{eqnarray}\label{eq:logdiv}
2^{k}\sum_{j=0}^{k/2}2^{-j/2}\norm{1_{D_{k,j}}\cdot
f_1*f_1*f_k}_{L_{\xi,\tau}^2}\ges \ k2^{3k/2},
\end{eqnarray}
which completes the proof of the proposition.
\end{proof}

\begin{proposition}\label{counterXsb}
For any $s\in \R$, there doesn't exists $b\in \R$ such that
\begin{equation}\label{eq:trilinearXsb}
\norm{\partial_x(uvw)}_{X^{s,b-1}}\les\
\norm{u}_{X^{s,b}}\norm{v}_{X^{s,b}}\norm{w}_{X^{s,b}}.
\end{equation}
\end{proposition}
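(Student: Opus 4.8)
The plan is to produce, for every pair $(s,b)\in\R^{2}$, a sequence of elementary test functions that violates \eqref{eq:trilinearXsb}. As in Proposition \ref{countertrilinear}, the obstruction is a $high\times low\times low$ interaction in which the output carries the top frequency; the new feature is that, by the resonance estimate of Lemma \ref{esresonance}, the output is spread over \emph{every} dyadic modulation scale up to the resonant one, and the $l^{2}$--in--$\tau$ summation built into the space $X^{s,b}$ is too weak to absorb this for \emph{any} single value of $b$. We shall use two families of examples: the first rules out $b\geq 1/2$, the second rules out $b\leq 1/2$. The only step that is not routine dyadic bookkeeping is the lower bound on the $L^{2}$--mass of $\widehat{u}*\widehat{v}*\widehat{w}$ on each slab $D_{k,j}$: one must check, via the resonance identity \eqref{eq:reso} and the derivative estimates of Lemma \ref{elemes}, that a change of variables delivers genuine mass --- not merely an upper bound --- at every dyadic modulation below the resonant scale, and this is where the precise high-frequency behaviour of $\dr$ enters.

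Fix $N=2^{k}\gg 1$. For the first family let $\widehat{u},\widehat{v},\widehat{w}$ be the characteristic functions of the sets $\{\xi\in[1,2],\ |\tau-\dr(\xi)|\leq1\}$, $\{\xi\in[-2,-1],\ |\tau-\dr(\xi)|\leq1\}$ and $\{\xi\in[N,2N],\ |\tau-\dr(\xi)|\leq1\}$; then $\norm{u}_{X^{s,b}}\sim\norm{v}_{X^{s,b}}\sim1$ and $\norm{w}_{X^{s,b}}\sim N^{s+1/2}$. Writing $a:=\xi_{1}+\xi_{2}\in[-1,1]$, identity \eqref{eq:reso} reads $\Omega(\xi_{1},\xi_{2},\xi_{3})=\dr(\xi_{1})+\dr(\xi_{2})-\brk{\dr(\xi_{3}+a)-\dr(\xi_{3})}$, and since $\dr$ is strictly increasing on $(0,\infty)$ with $\dr'(\xi_{3})\sim N$ (Lemma \ref{elemes}), the term $\dr(\xi_{3}+a)-\dr(\xi_{3})$ is comparable to $aN$ and, as $(a,\xi_{3})$ runs over $[-1,1]\times[N,2N]$, covers an interval of length $\sim N$ containing $0$. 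Hence $\widehat{u}*\widehat{v}*\widehat{w}$ is supported where $|\xi|\sim N$ and $\tau-\dr(\xi)$ attains every dyadic value $2^{j}$, $0\leq j\leq k$. Regarding $\Omega$ as a function of $(\xi_{1},\xi_{2})$ with $\xi_{1}+\xi_{2}+\xi_{3}$ frozen we have $|\partial_{\xi_{1}}\Omega|=|\dr'(\xi_{1})-\dr'(\xi_{3})|\sim N$, so a level--set computation shows $\widehat{u}*\widehat{v}*\widehat{w}\ges N^{-1}$ on a subset of measure $\ges N^{2}$ of its support, whence its restriction to $D_{k,j}$ has $L^{2}$--norm $\ges N^{-1/2}2^{j/2}$. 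Therefore
\begin{eqnarray*}
\norm{\partial_{x}(uvw)}_{X^{s,b-1}}^{2}&\ges&\sum_{j=0}^{k}N^{2s}2^{2(b-1)j}N^{2}\brk{N^{-1/2}2^{j/2}}^{2}\\
&=&N^{2s+1}\sum_{j=0}^{k}2^{(2b-1)j},
\end{eqnarray*}
while the right-hand side of \eqref{eq:trilinearXsb} is $\sim N^{2s+1}$. For $b=1/2$ the sum equals $k+1$ and for $b>1/2$ it is $\sim N^{2b-1}$; letting $N\to\infty$ contradicts \eqref{eq:trilinearXsb}. (The factor $N^{2s}$ cancelled, so $s$ is irrelevant.)

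For the second family keep $\widehat{u}$ as above, take $\widehat{v}$ to be the characteristic function of $\{\xi\in[1,2],\ |\tau-\dr(\xi)|\leq1\}$, and let $\widehat{w}$ be that of $\{\xi\in[N,2N],\ \tau-\dr(\xi)\in[4N,8N]\}$, so that $\norm{u}_{X^{s,b}}\sim\norm{v}_{X^{s,b}}\sim1$ and $\norm{w}_{X^{s,b}}\sim N^{s+b+1}$. Now $a=\xi_{1}+\xi_{2}\in[2,4]$, so $\Omega\approx-\brk{\dr(\xi_{3}+a)-\dr(\xi_{3})}\sim-N$ is no longer small, but it is cancelled by the modulation $\tau_{3}-\dr(\xi_{3})\in[4N,8N]$ of $w$; because that window has width $\sim N$, the same level--set argument now gives $\widehat{u}*\widehat{v}*\widehat{w}\ges1$ on a subset of measure $\ges N^{2}$ of $\{|\xi|\sim N,\ |\tau-\dr(\xi)|\les N\}$, hence $L^{2}$--norm $\ges N^{1/2}2^{j/2}$ on $D_{k,j}$, $0\leq j\leq k$. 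Thus
\begin{eqnarray*}
\norm{\partial_{x}(uvw)}_{X^{s,b-1}}^{2}&\ges&\sum_{j=0}^{k}N^{2s}2^{2(b-1)j}N^{2}\brk{N^{1/2}2^{j/2}}^{2}\\
&=&N^{2s+3}\sum_{j=0}^{k}2^{(2b-1)j},
\end{eqnarray*}
while the right-hand side of \eqref{eq:trilinearXsb} is $\sim N^{2s+2b+2}$. Consequently \eqref{eq:trilinearXsb} would force $N^{1-2b}\sum_{j=0}^{k}2^{(2b-1)j}\les1$, which is false for $b<1/2$ (the sum is $O(1)$ while $N^{1-2b}\to\infty$) and for $b=1/2$ (the sum is $k+1$). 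Again $s$ is irrelevant. The two families together exhaust all $b\in\R$, which proves the proposition.
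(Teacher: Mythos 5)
Your proposal is correct and follows essentially the same route as the paper: explicit characteristic-function test data built on the $high\times low\times low$ resonance $|\Omega|\sim N|\xi_1+\xi_2|$, with low-modulation inputs ruling out $b\geq 1/2$ and a high-modulation input ruling out $b\leq 1/2$. The only substantive variation is that by taking the two low frequencies with opposite signs you spread the output over every modulation scale $2^j\les N$ and thus recover the $b=1/2$ logarithmic divergence inside your first family, whereas the paper obtains the case $b=1/2$ by citing Proposition \ref{countertrilinear} and handles $b\neq 1/2$ with the output (resp.\ input) modulation concentrated at the single scale $\sim N$, working with the dualized form \eqref{eq:trilinearXsbequiv} instead of computing the norms directly.
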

\begin{proof}
It is easy to see that the counterexample in the proof of
Proposition \ref{countertrilinear} shows that
\eqref{eq:trilinearXsb} doesn't hold for $b=1/2$ with a $k^{1/2}$
divergence in \eqref{eq:logdiv}. We assume now $b\neq 1/2$. Using
Plancherel's equality, we get that \eqref{eq:trilinearXsb} is
equivalent to
\begin{eqnarray}\label{eq:trilinearXsbequiv}
&&\norm{\frac{\jb{\xi}^s\xi}{\jb{\tau-\dr(\xi)}^{1-b}}\int
\frac{u(\xi_1,\tau_1)}{\jb{\xi_1}^s \jb{\tau_1-\dr(\xi_1)}^b}\frac{v(\xi_2,\tau_2)}{\jb{\xi_2}^s \jb{\tau_2-\dr(\xi_2)}^b}\nonumber\\
&&\quad \cdot \frac{w(\xi-\xi_1-\xi_2,\tau-\tau_1-\tau_2)}{\jb{\xi-\xi_1-\xi_2}^s \jb{\tau-\tau_1-\tau_2-\dr(\xi-\xi_1-\xi_2)}^b}d\tau_1 d\tau_2d\xi_1d\xi_2}_{L_{\xi,\tau}^2}\nonumber\\
&&\les \  \norm{u}_{L^2}\norm{v}_{L^2}\norm{w}_{L^2}.
\end{eqnarray}
Fix any dyadic number $N\gg 1$. Let
\[A=\{1/2\leq \xi\leq 10,\ |\tau|\leq
1\} \mbox{ and } B=\{N/2\leq \xi\leq 2N,\ |\tau|\leq 2^{10}\}.\]
Take
\[u(\xi,\tau)=v(\xi,\tau)=\chi_{A}(\xi,\tau-\dr(\xi)),\ w(\xi,\tau)=\chi_{B}(\xi,\tau-\dr(\xi)).\]
We easily see that $\norm{u}_{L_2}=\norm{v}_{L_2}\sim 1$ and
$\norm{w}_{L^2}\sim N^{1/2}$. Denote
$f(\xi,\tau)=u*v*w(\xi,\tau+\dr(\xi))$. Then we have
\begin{eqnarray*}
&&f(\xi,\tau)\\
&=&\int
u(\xi_1,\tau_1)v(\xi_2,\tau_2)w(\xi-\xi_1-\xi_2,\tau+\dr(\xi)-\tau_1-\tau_2)d\xi_1d\xi_2d\tau_1d\tau_2\\
&=&\int \chi_{\leq
2^{10}}(\tau-\tau_1-\tau_2+\dr(\xi)-\dr(\xi-\xi_1-\xi_2)-\dr(\xi_1)-\dr(\xi_2))\\
&&\chi_A(\xi_1,\tau_1)\chi_A(\xi_2,\tau_2)\chi_{[N/2,2N]}(\xi-\xi_1-\xi_2)d\xi_1d\xi_2d\tau_1d\tau_2\\
&=&\int \chi_{\leq 2^{10}}(\tau-\tau_1-\tau_2+2(\xi_1+\xi_2)\xi+(\xi_1-\xi_2)^2-\dr(\xi_1)-\dr(\xi_2)+o(1))\\
&&\chi_A(\xi_1,\tau_1)\chi_A(\xi_2,\tau_2)\chi_{[N/2,2N]}(\xi-\xi_1-\xi_2)d\xi_1d\xi_2d\tau_1d\tau_2.
\end{eqnarray*}
Therefore, fixing $M\gg 1$, we get for any $(\xi, \tau) \in
[(M-1)N/M, (M+1)N/M]\times [-8N,-4N]$, then $\tau=-C_0\xi$ for some
$2\leq C_0\leq 9$ and
\begin{eqnarray*}
f(\xi,\tau)\ges
\int\chi_A(\xi_1,\tau_1)\chi_A(\xi_2,\tau_2)\chi_{|\xi_1+\xi_2-C_0|\les
N^{-1}}d\xi_1d\xi_2d\tau_1d\tau_2\ges N^{-1}.
\end{eqnarray*}
Thus we see that the left-hand side of \eqref{eq:trilinearXsbequiv}
is larger than $N^b$, while the right-hand side is $N^{1/2}$, which
implies $b< 1/2$.

Similarly, by taking $B'=\{N/2\leq \xi\leq 2N,\ N\leq |\tau|\leq
N\}$ as before, we obtain that $b>1/2$. Therefore we complete the
proof of the proposition.
\end{proof}

\section{Proof of Theorem \ref{t11}}

In this section we devote to prove Theorem \ref{t11} by using the
standard fixed-point machinery. From Duhamel's principle, we get
that the equation \eqref{eq:mFDF} is equivalent to the following
integral equation:
\begin{eqnarray}\label{eq:inteq}
u=\U(t)\phi+\int_0^t \U(t-t')(\partial_x(u^3)(t'))dt'.
\end{eqnarray}
We will mainly work on the following truncated version
\begin{eqnarray}\label{eq:truninteq}
u=\psi(t)\U(t)\phi+\psi(t)\int_0^t
\U(t-t')\partial_x[(\psi(t')u)^3]dt',
\end{eqnarray}
where $\psi(t)=\eta_0(t)$ is a smooth cut-off function. Then we
easily see that if $u$ is a solution to \eqref{eq:truninteq} on
$\R$, then $u$ solves \eqref{eq:inteq} on $t\in [-1,1]$. Our first
lemma is on the estimate for the linear solution.

\begin{lemma}\label{l61}
If $\delta\geq c_0, s\geq 0$ and $\phi \in {H}^s$ then
\begin{equation}
\norm{\psi(t)\cdot (\U(t)\phi)}_{F^{s}}\les  \norm{\phi}_{{H}^s}.
\end{equation}
\end{lemma}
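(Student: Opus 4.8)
The plan is to make the Fourier transform of $\psi(t)\U(t)\phi$ explicit, reduce to a single dyadic frequency block, and then play the Schwartz decay of $\widehat\psi$ against the weights $\beta_{k,j}$.

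First, since $\ft_x[\U(t)\phi](\xi,t)=e^{it\dr(\xi)}\widehat\phi(\xi)$, taking also the Fourier transform in $t$ yields
\[
\ft\big(\psi(t)\U(t)\phi\big)(\xi,\tau)=\widehat\phi(\xi)\,\widehat\psi(\tau-\dr(\xi)).
\]
By the definition \eqref{eq:Fs} of $F^{s}$ together with the standard Littlewood--Paley fact $\sum_{k\ge0}2^{2sk}\|\eta_k\widehat\phi\|_{L^2}^2\les\|\phi\|_{H^s}^2$, it suffices to prove, uniformly in $k\in\Z_+$ and $\delta\ge c_0$,
\[
\|f_k\|_{Z_k}\les\|\eta_k(\xi)\widehat\phi(\xi)\|_{L^2_\xi},\qquad f_k(\xi,\tau):=\eta_k(\xi)\widehat\phi(\xi)\,\widehat\psi(\tau-\dr(\xi)).
\]
Because $f_k$ is supported in $I_k\times\R$, I would simply write $f_k=f_k+0$ with $f_k\in X_k$ and $0\in Y_k$, so that $\|f_k\|_{Z_k}\le\|f_k\|_{X_k}$; it is therefore enough to bound the $X_k$ norm (the smoothing space $Y_k$ is not needed for the free solution).

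Next I would compute $\|f_k\|_{X_k}$ directly. For a fixed modulation dyadic $j$, the change of variables $\mu=\tau-\dr(\xi)$ in the $\tau$ integral has Jacobian $1$ (in particular it is completely insensitive to $\delta$, so Lemma \ref{elemes} is never invoked), and it decouples the two variables:
\[
\|\eta_j(\tau-\dr(\xi))f_k\|_{L^2_{\xi,\tau}}=\|\eta_k\widehat\phi\|_{L^2_\xi}\cdot\|\eta_j(\mu)\widehat\psi(\mu)\|_{L^2_\mu}.
\]
Hence $\|f_k\|_{X_k}=\|\eta_k\widehat\phi\|_{L^2}\cdot\Sigma_k$ with $\Sigma_k:=\sum_{j\ge0}2^{j/2}\beta_{k,j}\|\eta_j(\mu)\widehat\psi(\mu)\|_{L^2}$, and it remains to bound $\Sigma_k$ by an absolute constant. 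I would split the sum at $j=2k$. For $j\le 2k$ we have $\beta_{k,j}=1+2^{2(j-2k)/5}\les1$, and since $\widehat\psi=\widehat{\eta_0}$ is Schwartz, $\sum_{j\le2k}2^{j/2}\|\eta_j\widehat\psi\|_{L^2}<\infty$. For $j>2k$ one has $2^{j/2}\beta_{k,j}\sim 2^{9j/10-4k/5}$, and using $\|\eta_j\widehat\psi\|_{L^2}\les_N 2^{-jN}$ with $N=1$ the tail is bounded by $\sum_{j>2k}2^{-j/10-4k/5}\les 2^{-k}$. Thus $\Sigma_k\les1$ uniformly in $k$ and $\delta$, which gives the desired block estimate.

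Finally, squaring, multiplying by $2^{2sk}$ and summing over $k$ recovers $\|\psi(t)\U(t)\phi\|_{F^{s}}^2\les\sum_k 2^{2sk}\|\eta_k\widehat\phi\|_{L^2}^2\les\|\phi\|_{H^s}^2$. There is no genuine obstacle in this lemma; the only point requiring a little care is that the coefficient $\beta_{k,j}$ grows in the modulation variable, so one must use the rapid decay of $\widehat\psi$ to absorb it, and one should note that the decoupling change of variables makes all constants independent of $\delta$, consistent with the uniformity claimed throughout.
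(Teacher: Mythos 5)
Your proof is correct and follows essentially the same route as the paper: compute $\ft[\psi\,\U(t)\phi]=\widehat\phi(\xi)\widehat\psi(\tau-\dr(\xi))$, bound $Z_k$ by $X_k$ on each block, decouple the $\xi$ and modulation variables, and let the rapid decay of $\widehat\psi$ absorb the weight $2^{j/2}\beta_{k,j}$. The only (immaterial) difference is that you split the modulation sum at $j=2k$, whereas the paper simply uses the cruder uniform bound $2^{j/2}\beta_{k,j}\les 2^{j}$ before summing.
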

\begin{proof}
A direct computation shows that
\begin{eqnarray*}
\ft[\psi(t)\cdot
(\U(t)\phi)](\xi,\tau)=\widehat{\phi}(\xi)\widehat{\psi}(\tau-\dr(\xi)).
\end{eqnarray*}
In view of definition, it suffices to prove that if $k\in \Z_+$ then
\begin{equation}\label{eq:l61}
\norm{\eta_k(\xi)\widehat{\phi}(\xi)\widehat{\psi}(\tau-\dr(\xi))}_{Z_k}\leq
C \norm{\eta_k(\xi)\widehat{\phi}(\xi)}_{L^2}.
\end{equation}
Indeed, from definition we have
\begin{eqnarray*}
\norm{\eta_k(\xi)\widehat{\phi}(\xi)\widehat{\psi}(\tau-\dr(\xi))}_{Z_k}&\leq&
\norm{\eta_k(\xi)\widehat{\phi}(\xi)\widehat{\psi}(\tau-\dr(\xi))}_{X_k}\\
&\leq&C\sum_{j=0}^\infty 2^{j}
\norm{\eta_k(\xi)\widehat{\phi}(\xi)}_{L^2}
\norm{\eta_j(\tau)\widehat{\psi}(\tau)}_{L^2}\\
&\leq&C\norm{\eta_k(\xi)\widehat{\phi}(\xi)}_{L^2},
\end{eqnarray*}
which is \eqref{eq:l61} as desired.
\end{proof}

Next lemma is on the estimate for the retarded linear term. We will
follow the method in \cite{IK} to prove it.

\begin{lemma}\label{l62}
If $\delta\geq c_0,\ s\geq 0$ and $u \in \Sch(\R\times \R)$ then
\begin{equation}
\normo{\psi(t)\cdot \int_0^t\U(t-s)(u(s))ds}_{F^{s}}\leq C
\norm{u}_{N^{s}}.
\end{equation}
\end{lemma}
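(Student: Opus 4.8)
The plan is to reduce the estimate for the retarded Duhamel operator to a purely frequency-localized statement on each piece $Z_k$, exactly as in Lemma 4.2 of \cite{IK}. Write $u=\sum_k P_k u$ and set $f_k=\eta_k(\xi)\ft(u)$; by the definition of $F^s$ and $N^s$ it suffices to prove, for each $k\in\Z_+$,
\begin{equation*}
\normo{\eta_k(\xi)\ft\brk{\psi(t)\int_0^t \U(t-s)(P_k u(s))\,ds}}_{Z_k}\les \norm{(\tau-\dr(\xi)+i)^{-1}f_k}_{Z_k}.
\end{equation*}
The standard device is to use the identity $\psi(t)\int_0^t\U(t-s)g(s)\,ds = c\,\psi(t)\int_\R \ft_t^{-1}\!\big[\tfrac{\widehat\psi(\tau')-\widehat\psi(\tau-\dr(\xi))}{\tau-\dr(\xi)}\cdot\,\cdots\big]$ — more precisely, after taking Fourier transforms one gets that $\ft$ of the retarded term equals a superposition, over a splitting of the modulation variable, of (i) a modulated free solution $\widehat\psi(\tau-\dr(\xi))\cdot h(\xi)$ with $h$ controlled by the low-modulation part of $f_k/(\tau-\dr(\xi))$, and (ii) a ``high modulation'' piece that is essentially $(\tau-\dr(\xi))^{-1}f_k$ multiplied by a bounded-in-$L^\infty_\tau$ cutoff.

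**Key steps.** First I would record the Fourier-side formula
\begin{equation*}
\ft\brk{\psi(t)\int_0^t\U(t-s)u(s)\,ds}(\xi,\tau)=c\int_\R \frac{\widehat\psi(\tau-\dr(\xi)-\lambda)-\widehat\psi(\tau-\dr(\xi))}{i\lambda}\,\ft(u)(\xi,\lambda+\dr(\xi))\,d\lambda,
\end{equation*}
then split the $\lambda$-integral into $|\lambda|\le 1$ and $|\lambda|\ge 1$ (dyadically, $\lambda\sim 2^j$). For the $|\lambda|\le 1$ region I would Taylor-expand the difference quotient so that the kernel is a Schwartz function of $\tau-\dr(\xi)$, reducing this contribution to a modulated free evolution of an $L^2_\xi$ function whose norm is $\les\norm{f_k}_{N\text{-piece}}$, and invoke Lemma~\ref{l61} (or rather the estimate \eqref{eq:l61} behind it) to bound it in $Z_k$. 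For the $|\lambda|\sim 2^j\ge 1$ piece, the two terms are treated separately: the term with $\widehat\psi(\tau-\dr(\xi)-\lambda)$ is again a modulated free solution (now with a factor $\widehat\psi$ evaluated at modulation $\sim 2^j$, which is rapidly decaying in $j$, so summing in $j$ is harmless), and the term $\widehat\psi(\tau-\dr(\xi))/\lambda \cdot \ft(u)(\xi,\lambda+\dr(\xi))$ is handled by Lemma~\ref{basicproperties}(a), using that multiplication by the bounded function $\widehat\psi(\tau-\dr(\xi))$ is bounded on $Z_k$ and that $\lambda^{-1}\ft(u)(\xi,\lambda+\dr(\xi))$ on $|\lambda|\sim 2^j$ is controlled by $(\tau-\dr(\xi)+i)^{-1}f_k$ restricted to $D_{k,j}$, whose $Z_k$-norm sums over $j$.

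**Main obstacle.** The routine Schwartz estimates and the $j$-summation are mechanical; the genuinely delicate point is making the $Y_k$ component of $Z_k$ survive this argument, since $Y_k$ is an $L^1_xL^2_t$-type space on which the ``modulated free solution'' reduction is not transparent. I expect one has to use exactly the structure recorded in \eqref{eq:gkform} and Lemma~\ref{basicproperties}(c) — i.e., that the low-modulation truncation $\eta_{\le j}(\tau-\dr(\xi))$ is bounded in the $L^1_xL^2_t$ sense — together with the observation that for the retarded term the output modulation is automatically $\les$ the input modulation plus $O(1)$, so no frequency piece is pushed above the admissible range $\bigcup_{j\le k-1}D_{k,j}$ defining $Y_k$. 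Handling that compatibility, and checking that the $|\lambda|\le 1$ contribution genuinely lands in $X_k$ (not just formally), is where the real care is needed; everything else follows the template of \cite{IK}.
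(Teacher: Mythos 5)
Your proposal is correct and follows essentially the same route as the paper: both reduce the estimate to the uniform-in-$k$ boundedness on $Z_k$ of the multiplier operator coming from the Fourier formula for $\psi(t)\int_0^t\U(t-s)u(s)\,ds$, and then run the modulation-splitting argument of Lemma 5.2 in \cite{IK} (low modulation via Taylor expansion of the difference quotient, high modulation via the rapid decay of $\widehat\psi$ and Lemma~\ref{basicproperties}, with the $Y_k$ component handled through the structure \eqref{eq:gkform}). The paper simply cites the Ionescu--Kenig proof with "slight modifications," so your sketch is in fact more explicit than the paper's own argument.
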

\begin{proof}
A straightforward computation shows that
\begin{eqnarray*}
&&\ft\left[\psi(t)\cdot
\int_0^t\U(t-s)(u(s))ds\right](\xi,\tau)\\
&=&c\int_\R
\ft(u)(\xi,\tau')\frac{\widehat{\psi}(\tau-\tau')-\widehat{\psi}(\tau-\dr(\xi))}{\tau'-\dr(\xi)}d\tau'.
\end{eqnarray*}
For $k\in \Z_+$ let
$f_k(\xi,\tau')=\ft(u)(\xi,\tau')\eta_k(\xi)(\tau'-\dr(\xi)+i)^{-1}$.
For $f_k\in Z_k$ let
\begin{eqnarray*}
T(f_k)(\xi,\tau)=\int_\R
f_k(\xi,\tau')\frac{\widehat{\psi}(\tau-\tau')-\widehat{\psi}(\tau-\dr(\xi))}{\tau'-\dr(\xi)}(\tau'-\dr(\xi)+i)d\tau'.
\end{eqnarray*}
In view of the definitions, it suffices to prove that
\begin{equation}
\norm{T}_{Z_k\rightarrow Z_k}\leq C \mbox{ uniformly in } k\in Z_+,
\end{equation}
which follows from the slightly modified proof of Lemma 5.2 in
\cite{IK}. We omit the details.
\end{proof}

We prove a trilinear estimate in the following proposition which is
an important component for using fixed-point argument.
\begin{proposition}\label{p63}
Assume $\delta\geq c_0$. Let $s\geq 1/2$. Then
\begin{eqnarray*}
\norm{\partial_x(\psi(t)^3uvw)}_{N^{s}}&\les&
\norm{u}_{F^{s}}\norm{v}_{F^{1/2}}\norm{w}_{F^{1/2}}\\
&&+\norm{u}_{F^{1/2}}\norm{v}_{F^{s}}\norm{w}_{F^{1/2}}+\norm{u}_{F^{1/2}}\norm{v}_{F^{1/2}}\norm{w}_{F^{s}}.
\end{eqnarray*}
\end{proposition}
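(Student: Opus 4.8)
The plan is to reduce the trilinear estimate to the dyadic estimates already proved in Section 5 via a standard Littlewood--Paley decomposition. First I would write $u=\sum_{k_1}P_{k_1}u$, $v=\sum_{k_2}P_{k_2}v$, $w=\sum_{k_3}P_{k_3}w$, and $\partial_x(\psi^3 uvw)=\sum_{k_4}P_{k_4}\partial_x(\psi^3 uvw)$, so that
\begin{eqnarray*}
\norm{\partial_x(\psi(t)^3uvw)}_{N^{s}}^2
=\sum_{k_4}2^{2sk_4}\norm{\eta_{k_4}(\xi)(\tau-\dr(\xi)+i)^{-1}\ft(\partial_x(\psi^3 u_{k_1}v_{k_2}w_{k_3}))}_{Z_{k_4}}^2,
\end{eqnarray*}
where the inner Fourier transform unfolds as a sum over $k_1,k_2,k_3$ of convolutions $\chi_{k_4}(\xi)\xi\,(\widehat{\psi^3}\,\text{-weighted})\, f_{k_1}*f_{k_2}*f_{k_3}$ with $f_{k_i}=\eta_{k_i}(\xi)\ft(u)$, etc. The factor $\psi(t)^3$ is harmless: its Fourier transform in $t$ is a Schwartz function, so using Lemma \ref{basicproperties}(a) (the $L^\infty_\tau$ multiplier bound, after writing $\psi^3$ as convolution in $\tau$) the cut-off can be absorbed, at the cost of replacing $f_{k_i}$ by $\psi(t)u_{k_i}$ whose $Z_{k_i}$ norm is still controlled by $\norm{u_{k_i}}_{F^{1/2}}$-type quantities; alternatively one keeps the $\psi(t')$ inside as in \eqref{eq:truninteq} and notes each $\ft^{-1}(f_{k_i})$ is then compactly supported in time on $[-1,1]$, which is exactly the hypothesis required by the Propositions of Section 5.

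Second, I would split the sum over $(k_1,k_2,k_3,k_4)$ into the interaction regimes covered by Section 5: (i) $low\times low$ ($N_{max}\les 1$); (ii) $low\times high\to high$ with the two low frequencies comparable; (iii) the same with the two low frequencies not comparable; (iv) $N_1\ll N_2\sim N_3\sim N_4$; (v) $N_1\sim N_2\sim N_3\sim N_4$; (vi) $high\times high\to low$ ($N_4\ll N_1\sim N_2$, $N_3\les N_1$); together with the symmetric relabelings. Each regime is handled by the corresponding Proposition, which gives a bound of the form
\begin{eqnarray*}
2^{k_4}\norm{\chi_{k_4}(\xi)(\tau-\dr(\xi)+i)^{-1}f_{k_1}*f_{k_2}*f_{k_3}}_{Z_{k_4}}\les 2^{\theta(k_1+k_2)}\,C(k_{max})\prod_{i=1}^3\norm{f_{k_i}}_{Z_{k_i}},
\end{eqnarray*}
with $\theta\in\{1/2,1/4\}$ the gain in the low frequencies and $C(k_{max})$ at worst a power of $k_{max}$. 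Multiplying by $2^{sk_4}$, distributing it using $k_4=k_{max}+O(1)$ in the high regimes (resp.\ $k_4\le k_{max}$ in the $high\times high\to low$ regime, where the gain $2^{k_4}$ against $2^{-k_4}$ from the operator leaves room to spare), and moving $2^{sk_i}$ onto the corresponding $\norm{f_{k_i}}_{Z_{k_i}}$ to form $\norm{u}_{F^s}$, the remaining exponential factors are $2^{-\theta k_{low}}$ times the lost $2^{(s-1/2)\cdot 0}$ — and since $s\ge 1/2$ the two untouched low factors cost only $2^{k_i/2}$, matched exactly by their $F^{1/2}$ norms. Finally I sum over the dyadic indices: the $2^{\pm\theta k_{low}}$ factors give geometric series in the low variables, and the polynomial factors $C(k_{max})$ are absorbed because in every genuinely resonant high regime Lemma \ref{esresonance} forces $|\Omega|\sim N_{sub}N_{thd}$, hence a modulation at least $2^{2k_{max}-O(1)}$ on one of the factors, producing (through $\beta_{k,j}$) an extra $2^{-\epsilon k_{max}}$ that beats any power of $k_{max}$; the Cauchy--Schwarz step in $k_4$ then yields the stated sum of three terms.

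\textbf{Main obstacle.} The essential difficulty is entirely contained in the dyadic Propositions of Section 5 — in particular the $low\times high$ cases where the naive $X^{s,b}$ estimate diverges logarithmically (Propositions \ref{countertrilinear}, \ref{counterXsb}) and must be rescued by the $Y_k$ component and the weights $\beta_{k,j}$, together with the cancellation in the resonance function isolated in Lemma \ref{esresonance}. Given those, the present proposition is bookkeeping: the only points requiring care are (a) checking that the $2^{sk_4}$ weight is correctly redistributed in the $high\times high\to low$ regime, where $k_4$ is the \emph{smallest} index and one must not lose the derivative — here the $2^{k_4}$ prefactor from $\partial_x$ fighting the $2^{-k_4}$ from $(\tau-\dr(\xi)+i)^{-1}$ combined with $Z_{k_4}\supset Y_{k_4}$ gives the needed room; and (b) verifying that the polynomial-in-$k_{max}$ losses are always accompanied by a modulation gain, so the final summation over $k_{max}$ converges. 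I expect step (a) to be the one demanding the most attention.
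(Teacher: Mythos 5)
Your proposal is correct and follows essentially the same route as the paper: decompose dyadically with $f_{k_i}=\eta_{k_i}(\xi)\ft(\psi(t)u)$ so the time-compact-support hypothesis of Section 5 applies, split the frequency sum into the interaction regimes of Propositions 5.1--5.6 (the paper reduces by symmetry to $k_1\le k_2\le k_3$ and lists the regions $A_1,\dots,A_6$), apply the corresponding dyadic bound in each regime, and sum using the exponential gains $2^{(k_1+k_2)/2}$ or $2^{(k_1+k_2)/4}$ against the $F^{1/2}$ weights (which also absorb the polynomial $k_{max}$ losses), finishing with the $\ell^2_{k_4}$ summation. Your bookkeeping remarks about redistributing $2^{sk_4}$, including the $high\times high\to low$ case, match how the paper treats the remaining regions "in the similar ways."
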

\begin{proof}
In view of definition, we get
\begin{eqnarray*}
\norm{\partial_x(\psi(t)^3uvw)}_{N^{s}}^2=\sum_{k_4=0}^{\infty}2^{2sk_4}\norm{\eta_{k_4}(\xi)(\tau-\dr(\xi)+i)^{-1}\ft
(\partial_x(\psi(t)^3uvw))}_{Z_{k_4}}^2.
\end{eqnarray*}
For $k_1, k_2, k_3\in \Z_+$, setting
$f_{k_1}=\eta_{k_1}(\xi)\ft(\psi(t)u)(\xi,\tau)$,
$f_{k_2}=\eta_{k_2}(\xi)\ft(\psi(t)v)(\xi,\tau)$, and
$f_{k_3}=\eta_{k_3}(\xi)\ft(\psi(t)w)(\xi,\tau)$,  then we get
\begin{eqnarray*}
&&2^{k_4}\norm{\eta_{k_4}(\xi)(\tau-\dr(\xi)+i)^{-1}\ft
(\psi(t)^3uvw)}_{Z_{k_4}}\nonumber\\
&&\les \sum_{k_1,k_2,k_3\in
\Z_+}2^{k_4}\norm{\eta_{k_4}(\xi)(\tau-\dr(\xi)+i)^{-1}f_{k_1}*f_{k_2}*f_{k_3}}_{Z_{k_4}}.
\end{eqnarray*}
From symmetry it suffices to bound
\begin{eqnarray*}
\sum_{0\leq k_1\leq k_2\leq
k_3}2^{k_4}\norm{\eta_{k_4}(\xi)(\tau-\dr(\xi)+i)^{-1}f_{k_1}*f_{k_2}*f_{k_3}}_{Z_{k_4}}.
\end{eqnarray*}
Setting $N_i=2^{k_i}, i=1,2,3,4$, we get
\begin{eqnarray}\label{eq:trilinear}
&&\sum_{k_1\leq k_2\leq
k_3}2^{k_4}\norm{\eta_{k_4}(\xi)(\tau-\dr(\xi)+i)^{-1}f_{k_1}*f_{k_2}*f_{k_3}}_{Z_{k_4}}\nonumber\\
&\leq& \sum_{j=1}^6\sum_{(k_1,k_2,k_3,k_4)\in A_j}
2^{k_4}\norm{\eta_{k_4}(\xi)(\tau-\dr(\xi)+i)^{-1}f_{k_1}*f_{k_2}*f_{k_3}}_{Z_{k_4}},
\end{eqnarray}
where we denote
\begin{eqnarray*}
&&A_1=\{0\leq N_1\leq N_2\ll N_3, N_3\gg 1, N_4\sim N_3, N_1\sim N_2 \};\\
&&A_2=\{0\leq N_1\leq N_2\ll N_3, N_3\gg 1, N_4\sim N_3, N_1\ll N_2 \};\\
&&A_3=\{0\leq N_1\leq N_2\leq N_3, N_3\gg 1,
N_4\sim N_3\sim N_2, N_1\ll N_2 \};\\
&&A_4=\{N_1\sim N_2\sim N_3 \sim N_4, N_3\gg 1 \};\\
&&A_5=\{0\leq N_1\leq N_2\leq N_3, N_4\ll N_3, N_3\gg 1,
N_2\sim N_3\};\\
&&A_6=\{\max(N_3,N_4)\les 1\}.
\end{eqnarray*}
We will apply Proposition 5.1-5.6 obtained in the last section to
bound the six terms in \eqref{eq:trilinear}. For example, for the
first term, from Proposition 5.1, we have
\begin{eqnarray*}
&&\normb{2^{sk_4}\sum_{k_i\in
A_1}2^{k_4}\norm{\eta_{k_4}(\xi)(\tau-\dr(\xi)+i)^{-1}f_{k_1}*f_{k_2}*f_{k_3}}_{Z_{k_4}}}_{l_{k_4}^2}\\
&\leq& C\normb{2^{sk_4}\sum_{k_i\in
A_1}2^{(k_1+k_2)/2}\norm{f_{k_1}}_{Z_{k_1}}\norm{f_{k_2}}_{Z_{k_2}}\norm{f_{k_3}}_{Z_{k_3}}}_{l_{k_4}^2}\\
&\leq& \norm{u}_{F^{1/2}}\norm{v}_{F^{1/2}}\norm{w}_{F^s}.
\end{eqnarray*}
For the other terms we can handle them in the similar ways.
Therefore we complete the proof of the proposition.
\end{proof}

Now we prove Theorem \ref{t11}. To begin with, we renormalize the
data a bit via scaling. By the scaling \eqref{eq:scaling}, we see
that if $s\geq 1/2$
\begin{eqnarray*}
&&\norm{\phi_\lambda}_{L^2}=\norm{\phi}_{L^2},\\
&&\norm{\phi_\lambda}_{\dot{H}^s}=\lambda^{-s}\norm{\phi}_{\dot{H}^s}.
\end{eqnarray*}
From the assumption $\norm{\phi}_{L^2}\ll 1$, thus we can first
restrict ourselves to considering \eqref{eq:mBO} with data $\phi$
satisfying
\begin{equation}
\norm{\phi}_{H^s}=r\ll 1.
\end{equation}
This indicates the reason why we assume that $\norm{\phi}_{L^2}\ll
1$.

Define the operator
\begin{eqnarray*}
\Phi_{\phi}(u)=\psi(t)\U(t)\phi+\psi(t)\int_0^t
\U(t-t')(\partial_x((\psi(t')u)^3)(t'))dt',
\end{eqnarray*}
and we will prove that $\Phi_\phi (\cdot)$ is a contraction  mapping
from
\begin{equation}
{\mathcal{B}}=\{w\in F^s:\ \norm{w}_{F^s}\leq 2cr\}
\end{equation}
into itself. From Lemma \ref{l61}, \ref{l62} and Proposition
\ref{p63} we get if $w\in \mathcal{B}$, then
\begin{eqnarray}
\norm{\Phi_\phi(w)}_{F^s}&\leq&
c\norm{\phi}_{H^s}+\norm{\partial_x(\psi(t)^3w^3(\cdot, t))}_{N^s}\nonumber\\
&\leq& cr+c\norm{w}_{F^s}^3\leq cr+c(2cr)^3\leq 2cr,
\end{eqnarray}
provided that $r$ satisfies $8c^3r^2\leq 1/2$. Similarly, for $w,
h\in \mathcal{B}$
\begin{eqnarray}
\norm{\Phi_\phi(w)-\Phi_\phi(h)}_{F^s}
&\leq& c\normo{ \psi(t)\int_0^t \partial_x[\psi^3(\tau)(w^3(\tau)-h^3(\tau))]d\tau}_{F^s}\nonumber\\
&\leq&c(\norm{w}_{F^s}^2+\norm{h}_{F^s}^2)\norm{w-h}_{F^s}\nonumber\\
&\leq&8c^3r^2\norm{w-h}_{F^s}\leq \frac{1}{2}\norm{w-h}_{F^s}.
\end{eqnarray}
Thus $\Phi_\phi(\cdot)$ is a contraction. Therefore, there exists a
unique $u\in \mathcal{B}$ such that
\begin{eqnarray*}
u=\psi(t)W(t)\phi+\psi(t)\int_0^t
W(t-t')(\partial_x[(\psi(t')u)^3](t'))dt'.
\end{eqnarray*}
Hence $u$ solves the integral equation \eqref{eq:inteq} in the time
interval $[-1,1]$.

Part (c) of Theorem \ref{t11} follows from the scaling
\eqref{eq:scaling}, Lemma \ref{embedding} and Proposition \ref{p63}.
Pard (d) follows from the standard argument. We prove now part (b).
It is easy to see that the energy methods as in \cite{ABFS} show
local well-posedness for Eq. \eqref{eq:mFDF} in $H^s$ for $s>3/2$.
One may improve this to $H^1$, using the methods in \cite{KK}.
According to Theorem 1.2 in \cite{KK}, it suffices to prove that if
$s> 1$ then
\[\partial_x u \in L^4_{t\in [0,T]}L^\infty_x.\] Indeed, this follows
from the fact that $u \in F^s(T)$ and $(4,\infty)$ is an admissible
pair and Lemma \ref{embedding}. Therefore, we complete the proof of
Theorem \ref{t11}.

\section{Ill-posedness Result}

In this section we will prove that the solution map of Eq.
\eqref{eq:mFDF} is not $C^3$ differentiable at origin in $H^s$ if
$s<1/2$, closely following the method in \cite{MR5, MR4}. Thus we
see $H^{1/2}$ is the critical regularity for which one can get
wellposedness by fixed point argument. Following standard fixed
point argument, one need to find the Banach space $X^s\subset
C([0,T];H^s)$ such that it verifies
\begin{eqnarray}
\norm{\U(t)u_0}_{X^s}&\les& \norm{u_0}_{H^s},\label{eq:freesol}\\
\normo{\int_0^t \U(t-\tau)\partial_x
(u_1u_2u_3)(\tau)d\tau}_{X^s}&\les&
\norm{u_1}_{X^s}\norm{u_2}_{X^s}\norm{u_3}_{X^s}.\label{eq:retardes}
\end{eqnarray}
In particular, if we set $u_i=\U(t)\phi_i, \ i=1,2,3$, then we can
obtain from \eqref{eq:freesol} and \eqref{eq:retardes} that for
$0<t<T$,
\begin{eqnarray}\label{eq:trilinear}
\normo{\int_0^t \U(t-\tau)\partial_x
(\prod_{i=1}^3\U(\tau)\phi_i)d\tau}_{H^s}\les
\prod_{i=1}^3\norm{\phi_i}_{H^s}.
\end{eqnarray}
We will construct concrete functions $\phi_i,i=1,2,3$ such that
\eqref{eq:trilinear} fails if $s<1/2$ for any $t>0$.

As in \cite{MR4}, we fix $t\neq 0$ and define the real valued
function $\phi_N$ by:
\[\widehat{\phi_N}(\xi)=N^{-s}\gamma^{-1/2}\big(\chi_{[-\gamma-N,-N]}(\xi)+\chi_{[+N,+N+\gamma]}(\xi)\big),\]
with $\gamma=o(t^{-1})$. Then $\norm{\phi_N}_{H^s}\sim 1$. Let
\[u(x,t)=\int_0^t \U(t-\tau)\partial_x
(\prod_{i=1}^3\U(\tau)\phi_i)d\tau,\] then by straightforward
calculating we have
\begin{eqnarray*}
\ft_x(u)(\xi,t)=i\xi e^{it \dr(\xi)}\int_{\R\times \R}
\frac{e^{itP(\xi,\xi_1,\xi_2)}-1}{iP(\xi,\xi_1,\xi_2)}\widehat{\phi_N}(\xi_1)\widehat{\phi_N}(\xi_2)\widehat{\phi_N}(\xi-\xi_1-\xi_2)d\xi_1d\xi_2,
\end{eqnarray*}
where
\begin{eqnarray*}
P(\xi,\xi_1,\xi_2)=\dr(\xi_1)+\dr(\xi_2)+\dr(\xi-\xi_1-\xi_2)-\dr(\xi).
\end{eqnarray*}
It is easy to see that
\begin{eqnarray*}
&&\ft_x(u)(\xi,t)\chi_{[N-\gamma,N+3\gamma]}(\xi) \simeq i\xi e^{it
\dr(\xi)}\\
&& \cdot \int_{\R\times \R}
\frac{e^{itP(\xi,\xi_1,\xi_2)}-1}{iP(\xi,\xi_1,\xi_2)}\chi_{[N,N+\gamma]}(\xi_1)\chi_{[N,N+\gamma]}(\xi_2)\chi_{[N,N+\gamma]}(\xi-\xi_1-\xi_2)d\xi_1d\xi_2.
\end{eqnarray*}
Since $N\gg 1$ then due to the localization (Note that there is a
cancelation which is crucial)
\begin{eqnarray*}
|P(\xi,\xi_1,\xi_2)|&=&|\coth(\xi_1)+\coth(\xi_2)+\coth(\xi-\xi_1-\xi_2)-\coth(\xi)|\\
&\simeq& |\xi_1^2+\xi_2^2-(\xi-\xi_1-\xi_2)^2-\xi^2|\simeq \gamma^2.
\end{eqnarray*}
Therefore,
\begin{eqnarray}
\norm{u}_{H^s}\ges |t|\gamma N^{-2s}N \ges N^{-2s}N,
\end{eqnarray}
which implies $s\geq 1/2$.

Considering the solution map of Eq. \eqref{eq:mFDF} $\phi
\rightarrow u(t)$, then by computing the Frechet derivatives, we get
\begin{eqnarray*}
\frac{\partial^3 u}{\partial^3 \phi}\big|
_{\phi=0}(h_N,h_N,h_N)=\int_0^t
\U(t-\tau)\partial_x[(\U(\tau)h_N)^3]d\tau.
\end{eqnarray*}
So, if $\phi \rightarrow u$ is of class $C^3$ at the origin, then we
have
\begin{eqnarray}
\normo{\int_0^t \U(t-\tau)\partial_x
[(\U(\tau)h_N)^3]d\tau}_{H^s}\les \norm{h_N}_{H^s}^3,
\end{eqnarray}
which fails as we have showed.

\section{Limit Behavior}

In this section we prove Theorem \ref{limit}. We only prove the
theorem for $s=1/2$ since the other case can be treated in the same
ways. We need the following lemma which follows immediately from the
definition.
\begin{lemma}\label{l81}
Assume $\delta>0$. If $s\in \R$ and $u\in L_t^2H_x^s$, then
\begin{equation}
\norm{u}_{N^s}\les \norm{u}_{L_t^2H_x^s}.
\end{equation}
\end{lemma}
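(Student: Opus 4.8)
The plan is to bound $\norm{u}_{N^s}$ by a weighted $L^2$ norm of $\ft(u)$, which by Plancherel and Littlewood--Paley is precisely $\norm{u}_{L_t^2H_x^s}$. The mechanism is that the multiplier $(\tau-\dr(\xi)+i)^{-1}$ occurring in the definition \eqref{eq:Ns} of $N^s$ gains exactly enough decay in the modulation variable $\tau-\dr(\xi)$ to swallow the weight $2^{j/2}\beta_{k,j}$ that is built into the norm of $X_k$.

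First I would replace $Z_k$ by $X_k$. Since $Z_k=X_k$ for $k\le M-1$ and $Z_k=X_k+Y_k$ for $k\ge M$, the trivial splitting $f=f+0$ yields $\norm{f}_{Z_k}\le\norm{f}_{X_k}$ for every $f$ supported in $I_k\times\R$, hence
\[
\norm{u}_{N^s}^2\le\sum_{k\in\Z_+}2^{2sk}\normo{\eta_k(\xi)(\tau-\dr(\xi)+i)^{-1}\ft(u)}_{X_k}^2 .
\]
Then, for each fixed $k$, I would decompose according to the size of $\tau-\dr(\xi)$: on the support of $\eta_j(\tau-\dr(\xi))$ one has $\aabs{(\tau-\dr(\xi)+i)^{-1}}\les2^{-j}$, so the definition \eqref{eq:Xk} of the $X_k$ norm together with \eqref{eq:betakj} gives
\[
\normo{\eta_k(\xi)(\tau-\dr(\xi)+i)^{-1}\ft(u)}_{X_k}\les\sum_{j\ge0}2^{-j/2}\beta_{k,j}\,\normo{\eta_k(\xi)\eta_j(\tau-\dr(\xi))\ft(u)}_{L^2}.
\]
The one point deserving attention is that the prefactor decays in $j$: since $2^{-j/2}\beta_{k,j}=2^{-j/2}+2^{-j/10-4k/5}\le2\cdot2^{-j/10}$, the Cauchy--Schwarz inequality in $j$ and the bounded overlap of the functions $\{\eta_j(\tau-\dr(\xi))\}_{j\ge0}$ (namely $\sum_{j\ge0}\eta_j(\cdot)^2\les1$) give
\[
\normo{\eta_k(\xi)(\tau-\dr(\xi)+i)^{-1}\ft(u)}_{X_k}\les\normo{\eta_k(\xi)\ft(u)}_{L^2_{\xi,\tau}}.
\]

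Finally I would sum over $k$ and use $\sum_{k\in\Z_+}2^{2sk}\abs{\eta_k(\xi)}^2\sim\jb{\xi}^{2s}$ together with Plancherel in $t$ and the Fourier characterization of $H_x^s$:
\[
\norm{u}_{N^s}^2\les\sum_{k\in\Z_+}2^{2sk}\normo{\eta_k(\xi)\ft(u)}_{L^2_{\xi,\tau}}^2\sim\int_{\R^2}\jb{\xi}^{2s}\abs{\ft(u)(\xi,\tau)}^2\,d\xi\,d\tau\sim\norm{u}_{L_t^2H_x^s}^2,
\]
which is the claimed inequality. There is no genuine obstacle here; the only step that is not completely automatic is checking that the gain $2^{-j}$ coming from $(\tau-\dr(\xi)+i)^{-1}$ beats the weight $2^{j/2}\beta_{k,j}$ of the $X_k$ norm, and this works because the exponent $2/5<1/2$ in \eqref{eq:betakj} keeps $2^{-j/2}\beta_{k,j}$ summable in $j$.
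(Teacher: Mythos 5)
Your proof is correct and is essentially the argument the paper intends: the paper states Lemma \ref{l81} ``follows immediately from the definition,'' and your computation — bounding $\norm{\cdot}_{Z_k}$ by $\norm{\cdot}_{X_k}$, using the $2^{-j}$ gain from $(\tau-\dr(\xi)+i)^{-1}$ against the weight $2^{j/2}\beta_{k,j}$ (leaving $2^{-j/10}$ decay since $2/5<1/2$), then Cauchy--Schwarz in $j$ and Plancherel in $\tau$ — is exactly that verification, carried out correctly.
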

Assume $u_\delta$ is a $H^{1/2}$-strong solution to \eqref{eq:mFDF}
obtained in the last section and v is a $H^{1/2}$-strong solution to
\eqref{eq:mBO} in \cite{Guo}, with initial data $\phi_1,\phi_2\in
H^{1/2}$ satisfying $\norm{\phi_i}_{L^2}\ll 1, i=1,2$, respectively.
From the scaling \eqref{eq:scaling}, we may assume first that
$\norm{\phi_1}_{H^{1/2}},\norm{\phi_2}_{H^{1/2}}\ll 1$. We still
denote by $u_\epsilon, v$ the extension of $u_\epsilon, v$.
 Let
$w=u_\delta-v$ and $\phi=\phi_1-\phi_2$, then $w$ solves
\begin{eqnarray}\label{eq:diff}
\left \{
\begin{array}{l}
\partial_t w-\G_\delta(\partial_x^2 w)+(\G_\delta -\Hl)\partial_x^2 v+(\frac{w(w^2+3u_\delta v)}{3})_x=0, \ t\in \R_{+},\ x\in \R,\\
v(0)=\phi.
\end{array}
\right.
\end{eqnarray}
We first view $(\G_\delta -\Hl)\partial_x^2 v$ as a perturbation to
the difference equation, and consider the integral equation of
\eqref{eq:diff}
\begin{eqnarray*}
w(x,t)=\U(t)\phi-\int_0^t\U(t-\tau)[(\G_\delta -\Hl)\partial_x^2
v+(\frac{w(w^2+3u_\delta v)}{3})_x]d\tau.
\end{eqnarray*}
Then $w$ solves the following integral equation on $t\in [0,1]$,
\begin{eqnarray}
w(x,t)&=&\psi(t)\big[\U(t)\phi-\int_0^t \U(t-\tau)
\chi_{\R_+}(\tau)\psi(\tau)(\G_\delta -\Hl)\partial_x^2
v(\tau)d\tau\nonumber\\
&&\quad -\int_0^t \U(t-\tau)\partial_x[\psi^3(\tau)w(w^2+3u_\delta
v)](\tau)d\tau \big].
\end{eqnarray}
From Lemma \ref{l61} and Lemma \ref{l62}, \ref{l81} and Proposition
\ref{p63}, we get
\begin{eqnarray*}
\norm{w}_{F^{1/2}}\les
\norm{\phi}_{H^{1/2}}+\frac{1}{\delta}\norm{u_\delta}_{L^2_{[0,2]}{H}_x^{3/2}}+\norm{w}_{F^{1/2}}(\norm{v}_{F^{1/2}}+\norm{u_\epsilon}_{F^{1/2}})^2.
\end{eqnarray*}
Since from the proof of Theorem \ref{t11} we have
\[\norm{v}_{F^{1/2}}\les \norm{\phi_2}_{H^{1/2}}\ll 1,\quad \norm{u_\delta}_{F^{1/2}}\les \norm{\phi_1}_{H^{1/2}}\ll 1,\]
then we get that
\begin{equation}
\norm{w}_{F^{1/2}}\les
\norm{\phi}_{H^{1/2}}+\frac{1}{\delta}\norm{u_\delta}_{L^2_{[0,2]}{H}_x^{3/2}}.
\end{equation}
From Lemma \ref{embedding} and Theorem \ref{t11} (d) we get
\begin{eqnarray*}
\norm{u_\delta-v}_{C([0,1], H^{1/2})}\les
\norm{\phi_1-\phi_2}_{H^{1/2}}+\frac{1}{\delta}C(\norm{\phi_1}_{H^{3/2}},\norm{\phi_2}_{H^{1/2}}).
\end{eqnarray*}

For general $\phi_1,\phi_2 \in H^{1/2}$ satisfying
$\norm{\phi_i}_{L^2}\ll 1,\ i=1,2$, using the scaling
\eqref{eq:scaling}, then we immediately get that there exists
$T=T(\norm{\phi_1}_{H^{1/2}},\norm{\phi_2}_{H^{1/2}})>0$ such that
\begin{eqnarray}\label{eq:limitH1half}
\norm{u_\delta-v}_{C([0,T], H^{1/2})}\les
\norm{\phi_1-\phi_2}_{H^{1/2}}+\frac{1}{\delta}C(T,\norm{\phi_1}_{H^{3/2}},\norm{\phi_2}_{H^{1/2}}).
\end{eqnarray}
Therefore, it follows that \eqref{eq:limitH1half} automatically
holds for any $T>0$ due to \eqref{eq:H1halfpriori} and Theorem \ref{t11} (d).\\

\noindent{\bf Proof of Theorem \ref{limit}.} For fixed $T>0$, we
need to prove that $\forall\ \eta>0$, there exists $N>0$ such that
if $\delta>N$ then
\begin{equation}\label{eq:limitH1half2}
\norm{S_T^\delta(\varphi)-S_T(\varphi)}_{C([0,T];H^{1/2})}<\eta.
\end{equation}
We denote $\varphi_K=P_{\leq K}\varphi$. Then we get
\begin{eqnarray*}
&&\norm{S_T^\delta(\varphi)-S_T(\varphi)}_{C([0,T];H^{1/2})}\nonumber\\
&\leq&\norm{S_T^\delta(\varphi)-S_T^\delta(\varphi_K)}_{C([0,T];H^{1/2})}\nonumber\\
&&+\norm{S_T^\delta(\varphi_K)-S_T(\varphi_K)}_{C([0,T];H^{1/2})}+\norm{S_T(\varphi_K)-S_T(\varphi)}_{C([0,T];H^{1/2})}.
\end{eqnarray*}
From Theorem \ref{t11} (d) and \eqref{eq:limitH1half} and the
results in \cite{KenigT, Guo} that the solution map of the modified
Benjamin-Ono equation is Lipschitz continuous, we get
\begin{eqnarray}
\norm{S_T^\delta(\varphi)-S_T(\varphi)}_{C([0,T];H^{1/2})}\les
\norm{\varphi_K-\varphi}_{H^{1/2}}+\frac{1}{\delta}C(T,K,
\norm{\varphi}_{H^{1/2}}).
\end{eqnarray}
We first fix $K$ large enough, then let $\delta$ go to infinity,
therefore \eqref{eq:limitH1half2} holds. \endprf

\noindent{\bf Acknowledgment.} This work is supported in part by the
National Science Foundation of China, grant 10571004; and the 973
Project Foundation of China, grant 2006CB805902, and the Innovation
Group Foundation of NSFC, grant 10621061.

\footnotesize

\end{document}